\DeclareMathAlphabet{\mathbbold}{U}{bbold}{m}{n}
\def\k{\mathbbold{k}}
\DeclareSymbolFont{rsfscript}{OMS}{rsfs}{m}{n}
\DeclareSymbolFontAlphabet{\mathrsfs}{rsfscript}
\DeclareFontFamily{OMS}{rsfs}{\skewchar\font'177}
\DeclareFontShape{OMS}{rsfs}{m}{n}{%
      <5> rsfs5
      <6> <7> rsfs7
      <8> <9> <10> rsfs10
      <10.95> <12> <14.4> <17.28> <20.74> <24.88> rsfs10
      }{}
\def\calA{\mathrsfs{A}}
\def\calB{\mathrsfs{B}}
\def\calD{\mathrsfs{D}}
\def\calE{\mathrsfs{E}}
\def\calF{\mathrsfs{F}}
\def\calM{\mathrsfs{M}}
\def\calN{\mathrsfs{N}}
\def\calO{\mathrsfs{O}}
\def\calQ{\mathrsfs{Q}}
\def\calR{\mathrsfs{R}}
\def\calS{\mathrsfs{S}}
\def\calT{\mathrsfs{T}}
\def\calU{\mathrsfs{U}}
\def\calV{\mathrsfs{V}}
\def\calW{\mathrsfs{W}}
\newcommand{\comment}[1]{}
\newcommand{\smallcaps}[1]{\textrm{\textsc{#1}}}
\newcommand{\brac}[1]{\left(#1\right)}
\newcommand{\sbrac}[1]{\left[#1\right]}
\newcommand{\cbrac}[1]{{\left\{#1\right\}}}
\newcommand{\sym}{\mathbb{S}}
\newcommand{\symn}{\sym_n}
\newcommand{\dt}{\bullet}
\newcommand{\clc}{,\ldots,}
\newcommand{\smtree}[1]{\vcenter{\xymatrix@C=0.6pc@R=0.6pc{#1}}}
\newcommand{\bdY}{\mathbf{Y}}
\newcommand{\NAPY}{\smallcaps{NAP}_Y}
\newcommand{\RT}{\smallcaps{RT}}
\newcommand{\PFs}{\smallcaps{PF}_\ast}
\newcommand{\NAP}{\smallcaps{NAP}}
\newcommand{\Cact}{\smallcaps{Cact}}
\newcommand{\BCact}{\smallcaps{BCact}}
\newcommand{\BCY}{\BCact_Y}
\newcommand{\ed}[1]{\overrightarrow{#1}}
\newcommand{\PLC}{\text{Pl}\smallcaps{Cact}}
\newcommand{\LR}{\smallcaps{LR}}
\DeclareMathOperator{\As}{As}
\DeclareMathOperator{\Lie}{Lie}
\DeclareMathOperator{\Com}{Com}
\DeclareMathOperator{\PreLie}{PreLie}
\DeclareMathOperator{\PostLie}{PostLie}
\DeclareMathOperator{\Mag}{Mag}
\DeclareMathOperator{\ComTrias}{ComTrias}
\DeclareMathOperator{\CTD}{CTD}
\DeclareMathOperator{\Zinb}{Zinb}
\DeclareMathOperator{\Leib}{Leib}
\DeclareMathOperator{\Perm}{Perm}
\DeclareMathOperator{\Vect}{Vect}
\DeclareMathOperator{\gVect}{gVect}
\DeclareMathOperator{\SPAN}{span}
\DeclareMathOperator{\one}{\mathbbold{1}}
\DeclareMathOperator{\Hom}{Hom}
\DeclareMathOperator{\Set}{Set}
\theoremstyle{plain}
\newtheorem{theorem}{Theorem}[section]
\newtheorem{proposition}[theorem]{Proposition}
\newtheorem{corollary}[theorem]{Corollary}
\theoremstyle{definition}
\newtheorem{definition}{Definition}
\newtheorem{example}{Example}[section]
\theoremstyle{remark}
\newtheorem{remark}{Remark}[section]
\numberwithin{equation}{subsection}
\title{Cacti and filtered distributive laws}
\author{Vladimir Dotsenko}
\address{Mathematics Research Unit, University of Luxembourg, Campus Kirchberg, 6, Rue Richard Coudenhove-Kalergi, L-1359 Luxembourg, Grand Duchy of Luxembourg}
\email{vladimir.dotsenko@uni.lu}
\author{James Griffin}
\address{School of Mathematics, University of Southampton, Highfield, Southampton, SO17 1BJ, United Kingdom}
\email{J.T.Griffin@soton.ac.uk}
\begin{document}

\begin{abstract}
Motivated by the second author's construction of a classifying space for the group of pure symmetric automorphisms of a free product, we introduce and study a family of topological operads, the operads of based cacti, defined for every pointed topological space~$(Y,\bullet)$. These operads also admit linear versions, which are defined for every augmented graded cocommutative coalgebra~$C$. We show that the homology of the topological operad of based $Y$-cacti is the linear operad of based $H_*(Y)$-cacti. In addition, we show that for every coalgebra $C$ the operad of based $C$-cacti is Koszul. To prove the latter result, we use the criterion of Koszulness for operads due to the first author, utilising the notion of a filtered distributive law between two quadratic operads. We also present a new proof of that criterion which works over a ground field of arbitrary characteristic.
\end{abstract}
\maketitle

\section{Introduction}

One of the most famous algebraic operads of topological origin is the operad of Gerstenhaber algebras, which is the homology operad of the topological operad of  little $2$-disks~\cite{CLM,GJ}. The $k^\text{th}$ component of the operad of little $2$-discs is homotopy equivalent to the configuration space of $k$ ordered points in~$\mathbb{R}^2$ whose fundamental group is the pure braid group on $k$ strands. One natural way to generalise braid groups is to consider configurations of subsets that have more interesting topology than points. The simplest example of these ``higher-dimensional'' versions of braid groups is given by ``groups of loops'', the $n^\text{th}$ one being the group of motions of $n$ unknotted unlinked circles in~$\mathbb{R}^3$ bringing each circle to its original position. Alternatively, these groups can be viewed as groups of pure symmetric automorphisms of the free group with $n$ generators, that is automorphisms sending each generator to an element of its conjugacy class. The integral cohomology of these groups was computed by Jensen, McCammond and Meier in~\cite{JMM06}; that paper also contains references and historical information on this group. The description of the cohomology algebras in~\cite{JMM06} looks very similar to that for pure braid groups~\cite{Arn69}. Moreover, as a symmetric collection, the collection of cohomology algebras is isomorphic to $\Com\circ\PreLie_1$ which bears striking resemblance with the isomorphism $e_2\simeq\Com\circ\Lie_1$ for the operad of Gerstenhaber algebras. However, there is no natural operad structure on the collection of homology groups of the groups of loops. 

In \cite{Griffin}, the second author computed the cohomology of the groups of pure symmetric automorphisms in a different way, as a particular case of a much more general result: for an arbitrary $n$-tuple of groups $(G_1,\ldots,G_n)$, he computed the cohomology of the Fouxe-Rabinovitch group $\smallcaps{FR}(G)$ of partial conjugation automorphisms of the free product $G=G_1\ast\cdots\ast G_n$. For that, he used a construction of a classifying space of that group via a moduli space of ``cactus products'' of the classifying spaces $Y_i=BG_i$. In the case when $G_1=G_2=\ldots=G_n$, these spaces form a symmetric collection, but alas do not form a topological operad either. However, it turns out that they admit a slight modification that carries a structure of a topological operad; the required change is that one of the spaces $Y_i$ is chosen as the base and is required to sit at the root of each cactus.
\comment{That modification is defined for an arbitrary pointed topological space $(Y,\bullet)$;}
We call the modified space the space of based $Y$-cacti. The goal of this paper is to understand the algebra and topology of this operad.

For homology with coefficients in a field, we show that the homology operad of the operad of based $Y$-cacti is obtained from the homology coalgebra of $Y$ by a formal algebraic procedure that only uses the augmentation and the coproduct; thus, it is defined for every graded cocommutative coalgebra~$C$, not necessarily the homology coalgebra of a topological space. Remarkably, for every coalgebra $C$ this defined operad is Koszul. To prove that, we use filtered distributive laws between operads, as defined by the second author in~\cite{Dotsenko2006}.  One immediate consequence of our results is that, for $Y=S^1$, the homology operad of based $Y$-cacti is isomorphic, as an $\mathbb{S}$-module, to $\Perm\circ\PreLie_1$, \comment{where $\Perm$ is the operad of associative permutative algebras (which are essentially ``based commutative algebras'', that is commutative algebras with a distinguished factor in every product) and $\NAP$ is the operad of nonassociative permutative algebras 
(which is a ``degeneration'' of the operad $\PreLie$ of pre-Lie algebras mentioned above)} 
which, given that the operad of associative permutative algebras $\Perm$ encodes commutative algebras with additional structure, may be naturally thought of as an ``operad-compatible improvement'' of the result of~\cite{JMM06} mentioned above. 

Our constructions are defined over a field of arbitrary characteristic, and our results on operads of based cacti hold in that generality. However, even the distributive law criterion for Koszulness, let alone its filtered generalisation, has only been available in zero characteristic, since the known proofs \cite{Dotsenko2006,Vallette_PROP} rely on the K\"unneth formula for symmetric collections. Using the shuffle operads technique \cite{DKRes,DK}, we were able to obtain a characteristic-independent proof of this criterion. 

The paper is organised as follows. In Section~\ref{sec:background}, we recall necessary background information that we use throughout the paper. In Section~\ref{sec:top_cacti}, we define the topological operads of based cacti and discuss its connections both with automorphism groups of free products and with other known topological operads. The homology operad for the operad of based cacti is computed in Section~\ref{sec:homology}. In Section~\ref{sec:distr-laws}, we discuss filtered distributive laws between quadratic operads. Section~\ref{sec:Koszul} shows how to use filtered distributive laws to prove the Koszul property for the linear operads of based cacti, and also discuss its applications to the operad of post-Lie algebras and the operad of commutative tridendriform algebras.

\section{Trees, coalgebras, operads}\label{sec:background}

All ``linear'' objects in this paper (algebras, coalgebras, operads) will be enriched in a certain symmetric monoidal category~$(\mathcal{C},\otimes,\sigma,\mathbb{I})$, usually the category $\Vect$ of vector spaces or the category $\gVect$ of graded vector spaces (over some field~$\k$; unless otherwise specified, we do not make any assumptions on its characteristic). Whenever appropriate, we assume vector spaces to be finite-dimensional, or possessing an additional $\mathbb{N}$-grading with finite-dimensional homogeneous components; this allows to approach tensor constructions and duals with ease, freely pass between an algebra and its dual coalgebra etc.

\subsection{$\bdY$-labelled trees}
A \emph{tree} is an acyclic connected graph and a \emph{rooted tree} is a tree with a chosen vertex, the \emph{root}.
A rooted tree may be directed: every edge $\cbrac{v,w}$ may be oriented to $\ed{vw}$ in such a way that the minimal path from $w$ to the chosen vertex contains $\cbrac{v,w}$. By the acyclicity of the tree this must hold for exactly one of the choices $\ed{vw}$ and $\ed{wv}$.
The edges may be seen to be directed `away from the roots'.
We denote by $E(T)$ the set of edges of a tree $T$.

Suppose that $T$ is a tree with vertex set $V$.  Let $\bdY=\brac{Y_i}_{i\in V}$ be a $V$-tuple of topological spaces.
Then a \emph{$\bdY$-tree} is a rooted tree with an edge labelling where the edge $\ed{ij}$ is labelled by an element of $Y_i$.
For a space $Y$ as shorthand we define a $Y$-tree to be a $\bdY$-tree where the $V$-tuple $\bdY$ is constantly $Y$.
Then the edge labelling is a map from the edge set $E$ to the space $Y$. Meanwhile a $Y$-forest is a $\bdY$-tree where $\bdY$ is the $V$-tuple with $Y_0\cong \cbrac{\dt}$, where $0$ is the root vertex and $Y_v\cong Y$ for any other vertex. The naming makes sense because by removing the root $0$ and all adjacent vertices we are left with a disjoint union of $Y$-trees; the root of each tree is the unique vertex adjacent to $0$ and the edge labelling is inherited.

To a rooted tree $T$ we define the \emph{level} $l(T)$ to be the number of non-trivial directed paths in $T$.  So for a corolla with root $1$ and $k-1$ other vertices the level is $k-1$, for a tree with root $1$ and edges $\ed{i(i+1)}$ for $i=1\clc k-1$ the level is $k(k-1)/2$.
The level allows one to filter the set of $\bdY$-trees.

\subsection{Coalgebras}
A coalgebra is an object~$C$ of $\mathcal{C}$ equipped with a comultiplication $\Delta\colon C\to C\otimes C$ and a counit $\epsilon\colon C\to\mathbb{I}$ satisfying the conventional coassociativity and counit axioms. For the comultiplication, we often use Sweedler's notation $\Delta(c)=\sum c_{(1)}\otimes c_{(2)}$. An augmented coalgebra is a coalgebra $C$ equipped with a coalgebra homomorphism $\gamma\colon\mathbb{I}\to C$ such that $\epsilon\gamma=1$. A cocommutative coalgebra is a coalgebra satisfying $\sigma\Delta=\Delta$. Our main focus will be on graded augmented cocommutative coalgebras, that is augmented cocommutative coalgebras in $\gVect$. The main source of such coalgebras relevant for our purposes is topology: the homology coalgebra of a pointed topological space~$(Y,\dt)$ is a graded augmented cocommutative coalgebra. An augmented coalgebra in $\Vect$ or $\gVect$ naturally splits into a direct sum of vector spaces $C=\k\one\oplus\overline{C}$, where $\one=\gamma(1)$, $\overline{C}=\ker(\epsilon)$.

\subsection{Operads}

For details on operads we refer the reader to the book~\cite{LodayVallette2011}, for details on Gr\"obner bases for operads~--- to the paper~\cite{DK}. In this section we 
only recall the key notions used throughout the paper. By an operad (enriched in a symmetric monoidal category~$(\mathcal{C},\otimes,\sigma,\mathbb{I})$) we mean a monoid 
in one of the two monoidal categories: the category of symmetric $\mathcal{C}$-collections equipped with the composition product or the category of nonsymmetric 
$\mathcal{C}$-collections equipped with the shuffle composition product. The former kind of monoids is referred to as symmetric operads, the latter~--- as shuffle operads. 
We always assume that our collections are reduced, that is, have no elements of arity~$0$. A good rule of thumb is that all operads defined in this paper are symmetric 
operads, but for computational purposes it is useful to treat them as shuffle operads. This does not lose any information except for the symmetric group actions, since the 
forgetful functor $\calO\mapsto\calO^f$ is monoidal and one-to-one on objects (and therefore for tasks that can be formulated without the symmetric group actions, e.g. 
computing bases and dimensions of components, proving the Koszul property etc., we can choose arbitrarily whether to work with a symmetric operad or with its shuffle 
version). In the ``geometric'' setting, $\mathcal{C}$ will usually be the category of sets, or topological spaces, or pointed topological spaces, in the ``linear'' 
setting~--- the category of vector spaces (in which case symmetric collections are usually called $\mathbb{S}$-modules), or the category of graded vector spaces or chain
complexes (in which 
case symmetric collections are called differential graded $\mathbb{S}$-modules). A linear symmetric operad can also be thought as of collection of spaces of operations of 
some type, and therefore can be defined via its category of algebras, i.e. vector spaces where these operations act, via identities between operations acting on a vector 
space.

In the linear setting, a very useful technical tool for dealing with (shuffle) operads is given by Gr\"obner bases. More precisely, similarly to associative algebras, 
operads can be presented via generators and relations, that is as quotients of free operads $\calF(\calV)$, where $\calV$ is the space of generators. The free shuffle 
operad generated by a given nonsymmetric collection admits a basis of ``tree monomials'' which can be defined combinatorially; a shuffle composition of tree monomials is 
again a tree monomial. In addition to the ``arity'' of elements of a free operad, there is the notion of weight, similar to grading for associative algebras: we define the 
weight of a tree monomial as the number of generators used in this tree monomial. Weight is well behaved under composition: when composing several tree monomials, the 
weight of the result is equal to the sum of their weights. For an arbitrary operad~$\calO=\calF(\calV)/(\calR)$ whose relations $\calR$ are weight-homogeneous, the weight 
descends from the free operad $\calF(\calV)$ on~$\calO$; the subcollection of~$\calO$ consisting of all elements of weight~$k$ is denoted by~$\calO_{(k)}$.

There exist several ways to introduce a total ordering of tree monomials in such a way that the operadic compositions are compatible with that total ordering. There is also a combinatorial definition of divisibility of tree monomials that agrees with the naive operadic definition: one tree monomial occurs as a subtree in another one if and only if the latter can be obtained from the former by operadic compositions. A Gr\"obner basis of an ideal $I$ of the free operad is a system $S$ of generators of~$I$ for which the leading monomial of every element of the ideal is divisible by one of the leading terms of elements of~$S$. Such a system of generators allows to perform ``long division'' modulo~$I$, computing for every element its canonical representative. There exists an algorithmic way to compute a Gr\"obner basis starting from any given system of generators (``Buchberger's algorithm for shuffle operads''). 

A part of the operad theory which provides one of the most useful known tools to study homological and homotopical algebra for algebras over the given operad is the Koszul duality for operads~\cite{GK}. Proving that a given operad is Koszul instantly provides a minimal resolution for this operad, gives a description of the homology theory and, in particular, the deformation theory for algebras over that operad etc. There are a few general methods to prove that an operad is Koszul; one of the simplest and widely applicable methods~\cite{DK,DKRes} is to show that a given operad has a quadratic Gr\"obner basis (as a shuffle operad); this provides a sufficient (but not necessary) condition for Koszulness of an operad. If an operad is Koszul, it necessarily is \emph{quadratic}, that is has weight-homogeneous relations of weight~$2$. 

The operads that serve as ``building blocks'' for operads considered throughout the paper are mostly well known: $\Com$ (commutative associative algebras), $\Lie$ (Lie algebras), $\As$ (associative algebras), $\Leib$ (Leibniz algebras~\cite{Loday93}), $\Zinb$ (Zinbiel algebras~\cite{Loday95}), $\Perm$ ([associative] permutative algebras~\cite{Chapoton2001}), $\NAP$ (nonassociative permutative algebras~\cite{Livernet2006}, closely related to ``right-commutative magma''~\cite{DL2002}). All these operads are Koszul, and have a quadratic Gr\"obner basis.

\subsection{Polynomial functors}

As we said before, some of our constructions exist both in a ``geometric'' and a ``linear'' setting, and are related to each other via the homology functor (which assigns to a topological space $Y$ the graded cocommutative coalgebra~$H_*(Y)$). To make additional structures transfer easily, we use basic concepts of the theory of polynomial functors. A polynomial functor is a notion that categorifies the notion of a polynomial, and more generally of a formal power series. Polynomial functors provide a useful uniform language to deal with categorical constructions that have ``a polynomial flavour'', e.g. when computing sums and products in appropriate categories over specified sets indexing summands/factors in a way that keeps track of the intrinsic structure of the indexing sets. 

In precise words, a diagram of sets and set maps 
\begin{equation}\label{eq:PolyFun}
I\stackrel{s}{\longleftarrow}E\stackrel{p}{\longrightarrow}B\stackrel{t}{\longrightarrow}J
\end{equation}
gives rise to a polynomial functor $F:\Set/I \to \Set/J$ defined by the formula
\begin{equation}\label{eq:PolyFunFormula}
\Set/I \stackrel{s^*}{\longrightarrow} \Set/E \stackrel{p_*}{\longrightarrow}\Set/B\stackrel{t_!}{\longrightarrow} \Set/J.
\end{equation}
Here ${}_*$ and ${}_!$ denote, respectively, the right adjoint and the left adjoint of the pullback functor~${}^*$. More explicitly, the functor is given by
\begin{equation}\label{eq:PolyFunExplicitFormula}
 [f:X\to I] \longmapsto \sum_{b\in B} \prod_{e\in p^{-1}(b)} f^{-1}(s(e)), 
\end{equation}
where the last set is considered to be over $J$ via $t_!$. Here one can replace $\Set$ by another category where all the appropriate notions make sense. For our purposes, it is enough to consider the case $I=J=*$, in which case the corresponding functors were referred to as polynomial functors in \cite{MP2000}, and are called polynomial functors in one variable in more recent literature.

For a systematic introduction to polynomial functors, we refer the reader to the paper~\cite{KJBM2007} and the notes~\cite{Kock2010} that reflect the state-of-art of the theory. 

\section{The operad of cacti}\label{sec:top_cacti}

\subsection{The operad $\NAPY$}
Let $Y$ be a set and let $\NAPY(n)$ be the set of $Y$-trees with vertex set $\sbrac{n}=\cbrac{1\clc n}$.
When $Y$ is a singleton set this is just the set of rooted trees which we denote $\RT(n)$.
The symmetric group $\symn$ acts on $\NAPY(n)$ by permuting elements of the vertex set.
For a given rooted tree the set of $Y$-labellings is equal to $\Hom(E,Y)=Y^E$.
Since the number of edges of a tree on $\cbrac{n}$ is always $n-1$, the set of $Y$-labellings is in turn isomorphic to $Y^{n-1}$.
Hence
\begin{equation}\label{eq_NAPYpoly}
\NAPY(n) \cong \coprod_{T\in\RT(n)} Y^{n-1}.
\end{equation}
In this way if $Y$ is a topological space then we may also apply a topology to $\NAPY(n)$ using the product topology on $Y^{n-1}$.

Now let $T_1\in \NAPY(n)$ and $T_2\in\NAPY(m)$ and $i\in\sbrac{n}$.
We may define a composition $T_1\circ_i T_2\in \NAPY(n+m-1)$ by first identifying the root of $T_2$ with the vertex $i$ in $T_1$.  
This is a tree and may be rooted by taking the root of $T_1$.
The edge set is equal to the union $E(T_1)\amalg E(T_2)$ of the edge sets of $T_1$ and $T_2$ and so one inherits an edge labelling by elements of $Y$.
It has the vertex set
\begin{equation}
\cbrac{1\clc i-1}\amalg\cbrac{1\clc m}\amalg\cbrac{i+1\clc n}.
\end{equation}
We then relabel the vertices by elements of $\sbrac{n+m-1}$ using the isomorphism which fixes $\cbrac{1\clc i-1}$, shifts the set $\cbrac{1\clc m}$ to $\cbrac{i\clc m+i-1}$ and shifts $\cbrac{i+1\clc n}$ to $\cbrac{m+i\clc m+n-1}$.
This gives a rooted $Y$-tree on the vertex set $\cbrac{1\clc n+m-1}$ and so an element $T_1\circ_i T_2\in\NAPY(n+m-1)$.

\begin{proposition}\label{prop:NAPoperad}
Let $Y$ be a set, then the maps
\begin{equation}
\circ_i:\NAPY(n)\times \NAPY(m) \rightarrow \NAPY(n+m-1)
\end{equation}
for $i=1\clc n$ give the collection $\NAPY$ an operad structure.
The operad is generated by its binary operations:
\begin{equation}
\smtree{2\\1\ar[u]_y}\quad\text{ and }\quad\smtree{1\\2\ar[u]_z}
\end{equation}
for $y,z\in Y$ and these satisfy the quadratic relation
\begin{equation}\label{eq_NAPrel}
\smtree{2\\1\ar[u]_y}\circ_1\smtree{2\\1\ar[u]_z}\quad=\quad \brac{\smtree{2\\1\ar[u]_z}\circ_1\smtree{2\\1\ar[u]_y}}.(23).
\end{equation}
\comment{  
If $Y$ is also equipped with a topology then the maps $\circ_i$ are continuous and so $\NAPY$ is a topological operad.
}
\end{proposition}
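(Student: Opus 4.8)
The plan is to verify the operad axioms directly from the grafting definition, then to establish generation by the binary operations through induction on arity, and finally to confirm the quadratic relation by an explicit computation in arity three. I take the operadic unit to be the single-vertex $Y$-tree in $\NAPY(1)$: grafting it at a vertex $i$ of a tree $T$ leaves $T$ unchanged, while grafting any $T$ at the root of the single-vertex tree returns $T$, so both unit axioms hold. For equivariance, relabelling the vertices of $T_1$ and $T_2$ by permutations and then composing at $\circ_i$ agrees with composing at the index dictated by the permutation of $T_1$ and then relabelling the result by the induced block permutation in $\sym_{n+m-1}$; this is a direct check on the order-preserving bijections involved.

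The substance of the axioms is associativity. The underlying rooted tree produced by performing two successive graftings is manifestly independent of the order of attachment, since a grafting only identifies a root with a vertex and takes the disjoint union of the edge sets together with their inherited $Y$-labels. The only real content, and the main obstacle, is to confirm that the two canonical vertex-relabellings agree: in each associativity identity one must check that the reindexing scheme of the definition (fix an initial block, shift the inserted block, shift the tail) applied in the two permitted orders yields the same bijection onto $\sbrac{n+m+l-2}$. This is a finite verification, conceptually transparent but notationally heavy, which splits according to whether the two graftings are nested (the sequential axiom) or take place at distinct vertices of $T_1$ (the parallel axiom), and in the latter case on the relative order of the two grafting indices. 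Alternatively, one may reduce the whole bookkeeping to the undecorated case $Y=\cbrac{\dt}$, where $\NAPY$ is the classical rooted-tree operad $\NAP$ of \cite{Livernet2006}, and simply carry the edge-labels through each grafting unchanged.

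For generation I would induct on the arity $n$. The cases $n=1$ (the unit) and $n=2$ are immediate, the latter because the two displayed families exhaust $\NAPY(2)$, one rooted tree on $\cbrac{1,2}$ for each choice of root and each label in $Y$. For $n>2$, pick any leaf $v$ of a $Y$-tree $T$, with parent $u$ and incoming edge labelled $y$; deleting $v$ and reindexing $\sbrac{n}\setminus\cbrac{v}$ order-preservingly yields $T'\in\NAPY(n-1)$, in which $u$ becomes some vertex $u'$. Then $T'\circ_{u'}\smtree{2\\1\ar[u]_y}$ reattaches precisely the deleted edge and leaf, so $T$ equals this composite up to the $\symn$-action that corrects the vertex labels. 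Since the subcollection generated by the binary operations is closed under operadic composition and under the symmetric group, $T$ lies in it; here I use that the second displayed generator is the image of the first under the transposition $(12)$, so one family together with the $\symn$-action suffices.

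Finally, I would verify the relation \eqref{eq_NAPrel} by direct arity-three computation. Grafting $\smtree{2\\1\ar[u]_z}$ at the root of $\smtree{2\\1\ar[u]_y}$ and applying the reindexing of $\circ_1$ produces the tree on $\sbrac{3}$ rooted at $1$ with edges $\ed{12}$ and $\ed{13}$ labelled $z$ and $y$ respectively. Carrying out the same composition with $y$ and $z$ interchanged gives the tree rooted at $1$ with $\ed{12}$ and $\ed{13}$ labelled $y$ and $z$; applying $(23)$ interchanges vertices $2$ and $3$, and hence these two labelled edges, returning exactly the previous tree. Thus both sides of \eqref{eq_NAPrel} denote the same $Y$-tree, which records the fact that the children of a vertex carry no intrinsic order.
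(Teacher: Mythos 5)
Your proposal is correct and follows essentially the same route as the paper: associativity is reduced to the observation that grafting takes disjoint unions of edge sets (so labels are untouched) plus a routine vertex-relabelling check split into the sequential and parallel cases, generation is proved by deleting a leaf edge and reattaching it via a binary generator (induction on arity, up to the $\symn$-action), and the relation \eqref{eq_NAPrel} is verified by the same direct arity-three computation, both sides giving the corolla with root $1$ and edges to $2$, $3$ labelled $z$, $y$. Your additional explicit checks of the unit and equivariance axioms, and the remark that one may reduce the relabelling bookkeeping to the classical case $Y=\cbrac{\dt}$, are harmless refinements rather than a different argument.
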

\begin{proof}
Let $T_1$, $T_2$ and $T_3$ be $Y$-trees in $\NAP_Y(n_1)$, $\NAP_Y(n_2)$ and $\NAP_Y(n_3)$ respectively.
Let $i < j \in \sbrac{n_1}$ and $k\in\sbrac{n_2}$; we must show that the two associativity relations hold;
\begin{equation}\label{eq_Vassociativity}
(T_1\circ_j T_2) \circ_i T_3 = (T_1 \circ_i T_3) \circ_{j + n_3 - 1} T_2
\end{equation}
and
\begin{equation}\label{eq_Lassociativity}
T_1\circ_i (T_2 \circ_k T_3) = (T_1 \circ_i T_2) \circ_{k + i - 1} T_3.
\end{equation}
In both cases we are gluing together trees by identifying vertices~--- in the first we identify the roots of $T_2$ and $T_3$ with the vertices $j$ and $i$ of $T_1$ respectively~--- whilst in the second the root of $T_2$ is joined to vertex $i$ of $T_1$ and the root of $T_3$ is identified with vertex $k$ of $T_2$.
The only complication is that when two trees are composed their vertices are renumbered: this change is taken into account in the right hand side of each equation.
In both cases the edge set of the resulting tree is the union of the edge sets of the three component trees, hence the $Y$-labellings on both sides of each equation are equal. 
It remains to make the routine check that the vertex labels in each side of each equation agree, this is no more complicated than the analogous check in the associative operad.

Now we show that the operad is generated by operations of arity $2$.
Let $T\in\NAP_Y(n)$ be any $Y$-tree and let $\ed{ij}$ be a leaf of $T$; let $y$ be the label of $\ed{ij}$.
By applying a permutation if necessary we may assume that $i=n-1$ and $j=n$.
Letting $T'$ be the $Y$-tree in $\NAP_Y(n-1)$ given by removing the edge $\ed{(n-1)n}$ and the vertex $n$, we have that
\begin{equation}
T = T' \circ_{n-1} \smtree{2\\1\ar[u]_y}.
\end{equation}
Therefore any $Y$-tree may be written as compositions of trees with two vertices and a permutation and so $\NAP_Y$ is generated in arity $2$.

The relation~\eqref{eq_NAPrel} is to seen to hold by evaluating each side of the equation to find the same $Y$-tree
\begin{equation}
\smtree{2 && 3 \\ &1\ar[ur]_y\ar[ul]^z&}.
\end{equation}
\end{proof}
The above theorem gives quadratic relations in the binary generators, the Corollary~\ref{cor:NAPYquadratic} will show that these suffice to present the operad.

\begin{remark}\label{rem_polynomial}
The operads $\NAPY$ are functorial in sets $Y$, in fact $\NAP_{(-)}(n)$ is a polynomial functor given by the diagram
\begin{equation}\label{eq_NAPpolynomial}
\ast \leftarrow \coprod_{T\in\RT(n)} E(T) \rightarrow \RT(n) \rightarrow \ast.
\end{equation}
Both the operad maps and the proof above work on the level of the polynomial itself, hence for any appropriate category one may use the polynomial to give a family of operads $\NAP_{(-)}$.
For instance this means that if $Y$ is also equipped with a topology then $\NAP_Y$ is a topological operad.
In Section~\ref{sec:homology} we will consider the operads $\NAP_D$ where $D$ is a graded vector space.
\end{remark}
\begin{remark}
When $Y$ is a single point $\cbrac{\dt}$, the operad $\NAPY$ is the usual operad~$\NAP$.
\end{remark}

Let us finish this section with a few words on $\NAP_Y$-algebras. One convenient way to think of them is via the ``right regular module'', since the defining relations say that all the right multiplications 
\begin{equation}
R(y,b)\colon a\mapsto \smtree{2\\1\ar[u]^y}(a,b) 
\end{equation}
commute with each other. Somewhat more precisely, let $A$ be an object in a symmetric monoidal category~$\mathcal{C}$, and let
\begin{equation}
f\colon Y\times A\to\Hom_{\mathcal{C}}(A,A) 
\end{equation}
be a map whose image is an abelian submonoid. Then $A$ is a $\NAP_Y$-algebra enriched in~$\mathcal{C}$ with the structure maps given by 
\begin{equation}\label{eq:NAP-via-family}
\smtree{2\\1\ar[u]^y}(a,b)=f(y,b).a.
\end{equation} 
This way to approach $\NAP_Y$-algebras gives a source of examples based on $\Perm$-algebras with a family of maps as follows.

\begin{example}\label{ex:NAP-from-Perm}
Let $(A,\cdot)$ be a $\Perm$-algebra encriched in a symmetric monoidal category $\mathcal{C}$, and let $g_y$, $y\in Y$ be a family of maps in $\Hom_{\mathcal{C}}(A,A)$ (note that these maps
may be arbitrary, not necessarily algebra homomorphisms). Then $A$ is a $\NAP_Y$-algebra enriched in~$\mathcal{C}$ with the structure maps given by 
\begin{equation}\label{eq:NAP-via-hom}
\smtree{2\\1\ar[u]^y}(a,b)=a\cdot g_y(b). 
\end{equation} 
\end{example}

One more observation we want to mention in this section is that the construction of the free $\NAP$-algebra mentioned in~\cite{Livernet2006} admits an immediate generalisation to the case of $\NAP_Y$-algebras: the free
$\NAP_Y$-algebra enriched in $\Set$ with the generating set~$V$ admits a realisation as the set of $Y$-trees whose vertices carry labels from~$V$, with the product defined in the same way as we defined the composition in the operad:
\begin{equation}\label{eq:free-NAP}
\smtree{2\\1\ar[u]^y}(a,b)=(\smtree{2\\1\ar[u]^y}\circ_1 a)\circ_2 b.   
\end{equation}
In this composition the root of $b$ is joined to the root of $a$ by an edge labelled by $y$; the new root is taken to be the root of $a$.

\subsection{The operad of based cacti}
Let $V$ be a set and $\bdY$ be a $V$-tuple of pointed spaces.  Let $T$ be a $\bdY$-tree with root $r\in V$ and suppose that $\ed{ij}$ is an edge of $T$ where $i\neq r$.
Suppose further that $\ed{ij}$ is labelled by the basepoint $\dt\in Y_i$.
Then we say that $\ed{ij}$ is a \emph{reducible edge} and that $T$ is \emph{reducible}.
Since $i$ is not the root there is a unique incoming edge $\ed{ki}$ which is labelled by some $y\in Y_k$.
We define $T_{ij}$ to be the $\bdY$-tree given by removing the edge $\ed{ij}$ and adding the edge $\ed{kj}$ with the label $y\in Y_k$.
We say that $T_{ij}$ is a \emph{reduction} of $T$.
\begin{definition}
Let $V$ be a finite set and $\bdY$ be a $V$-tuple of pointed spaces.
Then the space of \emph{based $\bdY$-cacti}, $\BCact_\bdY$ is the topological space given by quotienting out by the relation $T\sim T_{ij}$ for any $T$ with a reducible edge $\ed{ij}$.

Now let $V_0$ be the set $V\cup\cbrac{0}$ and let $\bdY_0$ be the $V_0$-tuple given by adjoining $Y_0=\cbrac{\dt}$ to the $V$-tuple $\bdY$.
Then we define the space of $\bdY$-cacti, $\Cact_\bdY$ to be the subspace of $\BCact_{\bdY_0}$ consisting of the trees with root $0$.
\end{definition}
\begin{remark}
For each $\bdY$-cactus $T\in \BCact_\bdY$ one may define the space
\begin{equation}\label{eq_cactusproduct}
\bdY(T) = \dfrac{\coprod_{v\in V} Y_v}{y_{ij} \sim \dt_j \mid \ed{ij}\in E(T)},
\end{equation}
where $y_{ij}\in Y_i$ is the label of the edge $\ed{ij}$ and $\dt_j$ is the basepoint of $Y_j$.
Note that this realisation is invariant across equivalences $T\sim T_{ij}$.
If each space $Y_i$ is path connected then this space is homotopy equivalent to the wedge product of the spaces $Y_v$ for $v\in V$.  
These spaces are called \emph{cactus products} and were studied by the second author in~\cite{Griffin}.
There it was shown that the space $\Cact_\bdY$ of such products has interesting homotopical properties, in particular if the spaces $Y_i$ are classifying spaces for groups $G_i$ then $\Cact_\bdY$ is a classifying space for the Fouxe-Rabinovitch group $\smallcaps{FR}(G)$ of partial conjugation automorphisms of the free product $G=\ast_{i\in V}G_i$.
An example of a cactus product:
\begin{equation}
\vcenter{\hbox{\includegraphics[height=25mm]{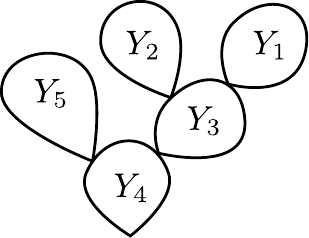}}}
\end{equation}
Note that if $v$ is the root of the tree $T$ then the space $Y_v$ must always be at the `base' of the diagram.
The appearance of the diagram explains the term `based $\bdY$-cactus'.
We also see the reason for adjoining a point space $Y_0$; this removes the base space; the space $Y_0$ acts as a basepoint.
\end{remark}
\begin{remark}
Recall that the level of a rooted tree is the number of non-trivial directed paths.
When $\ed{ki}$ and $\ed{ij}$ are edges of a rooted tree $T$, the rooted tree $T'$ given by removing $\ed{ij}$ and then adding $\ed{kj}$ has strictly lesser level.  
Indeed if $P$ is the unique path joining vertices $v$ and $w$ in $T'$, then there is a unique path joining $v$ and $w$ in $T$.  
But the number of paths in $T$ is strictly larger because there is a path joining $i$ and $j$ in $T$ but not in $T'$.
So for any $\bdY$-tree $T$ one may use the reductions $T\sim T_{ij}$ repeatly until there are no reducible edges remaining.
Since the level reduces each time this process must terminate.
It is easy to check that it does not matter what order the reductions $T\sim T_{ij}$ are applied because if $\ed{ab}$ and $\ed{cd}$ are two reducible edges then $(T_{ab})_{cd}=(T_{cd})_{ab}$.
Hence for each $Y$-labelled tree there is a unique equivalent tree which can not be reduced any further.
Therefore $\BCact_\bdY$ is isomorphic to the set of irreducible $\bdY$-trees.
\end{remark}
\begin{definition}
Let $(Y,\dt)$ be a pointed space.
For $n\geq 1$, we define the space $\BCY(n)$ to be the space of based cacti on the $n$-tuple $\bdY=\brac{Y_i\cong Y}_{i=1\clc n}$.
The action of $\symn$ on $\cbrac{1\clc n}$ makes this into a symmetric collection.
\end{definition}

\begin{theorem}\label{thm:BCYoperad}
Let $(Y,\dt)$ be a pointed space.
The equivalence relation $\sim$ generated by reductions $T\sim T_{ij}$ is compatible with the operad maps of $\NAPY$.
Hence the quotient collection $\BCY$ has an operad structure inherited from $\NAPY$.
Furthermore the equivalence relation $\sim$ is generated as an operad ideal by the single relation
\begin{equation}\label{eq_cactrel}
\smtree{3 \\ 2\ar[u]_\dt\\ 1\ar[u]_y} \quad=\quad \smtree{2 && 3 \\ &1\ar[ul]^y \ar[ur]_y& }.
\end{equation}
\end{theorem}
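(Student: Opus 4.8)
The plan is to prove the three assertions in the order stated: first that reductions commute with the operadic composition of $\NAPY$ (which yields both the compatibility of $\sim$ and the inherited operad structure), and then that the resulting congruence is generated by the single local move \eqref{eq_cactrel}. Throughout I will use that a reduction is a purely \emph{local} operation: it deletes a reducible edge $\ed{ij}$ and inserts $\ed{kj}$ carrying the label $y$ of the unique incoming edge $\ed{ki}$, leaving all other edges, labels and the root untouched.

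For compatibility it suffices, since $\sim$ is the transitive closure of single reductions and $\circ_v$ is a binary operation, to check that one reduction commutes with one composition. Fixing $T_1\in\NAPY(n)$, $T_2\in\NAPY(m)$, a slot $v$, and a reducible edge $\ed{ij}$ of $T_1$, I would verify
\[
(T_1)_{ij}\circ_v T_2 = (T_1\circ_v T_2)_{i'j'},
\]
where $i',j'$ are the images of $i,j$ under the relabelling built into $\circ_v$, and symmetrically for a reducible edge of $T_2$. Composition only merges the root of $T_2$ with $v$ and renumbers vertices; it changes neither the label of an existing edge nor the root of the composite, and it never turns $i$ into a root (reducibility forces $i$ to be a non-root already). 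Hence the parent $k$, the edge $\ed{ki}$ and its label $y$ all survive, so the reduction and the composition act on disjoint data and commute; running through the few positions of $v$ relative to $k,i,j$ confirms this. Consequently $\circ_v$ descends to $\BCY$, proving the first two assertions.

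Write $G$ for the operadic congruence generated by \eqref{eq_cactrel}. The inclusion $G\subseteq\sim$ is immediate: the left-hand side of \eqref{eq_cactrel} is a tree whose edge $\ed{23}$ is reducible, and its right-hand side is exactly the corresponding reduction, so \eqref{eq_cactrel} is one instance of $T\sim T_{ij}$; as $\sim$ is a congruence by the previous step, it contains the congruence generated by \eqref{eq_cactrel}. The substance is the reverse inclusion $\sim\subseteq G$, i.e.\ that \emph{every} reduction $T\sim T_{ij}$ can be produced from \eqref{eq_cactrel} by compositions and $\symn$-relabellings. I would localize the move by peeling off everything it does not touch. Let $k$ be the parent of $i$, let $T_j$ be the subtree hanging at $j$, let $A$ be the tree rooted at $i$ recording the remaining children of $i$ and their subtrees, let $B$ be the tree rooted at $k$ recording the remaining children of $k$, and let $\hat T$ be the part of $T$ lying below $k$. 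Using the binary generation of $\NAPY$ (Proposition~\ref{prop:NAPoperad}) and the associativity relations \eqref{eq_Vassociativity}--\eqref{eq_Lassociativity}, I would exhibit, up to relabelling,
\[
T = \hat T \circ_k \bigl(((L \circ_2 A)\circ_3 T_j)\circ_1 B\bigr), \quad
T_{ij} = \hat T \circ_k \bigl(((R \circ_2 A)\circ_3 T_j)\circ_1 B\bigr),
\]
where $L,R$ are the two sides of \eqref{eq_cactrel} with common label $y$. Because the reduction alters only the triangle $k,i,j$, the identical ambient data $\hat T,A,B,T_j$ reproduces $T_{ij}$ once $L$ is replaced by $R$; applying the generating relation $L\sim R$ together with stability of $G$ under $\circ_i$ and relabelling then gives that $T$ and $T_{ij}$ are $G$-equivalent. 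Since $\sim$ is generated by the reductions, $\sim\subseteq G$, and with the previous inclusion $\sim=G$.

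The main obstacle is the bookkeeping in the two displayed decompositions. One must check that the renumberings introduced by the successive $\circ_i$ can be absorbed into a single permutation, that the grafting slots $1,2,3$ still point to the intended vertices after the earlier compositions have relabelled them, and that the degenerate configurations are covered: $i$ or $k$ having no other children (so $A$ or $B$ is trivial), $j$ already a leaf (so $T_j$ is trivial), and $k$ being the root (so $\hat T$ is absent). None of these is conceptually hard, but they must be arranged so that a single ambient composite serves both $T$ and $T_{ij}$ simultaneously; I would tame the indexing by first using the outer compositions $\hat T\circ_k(-)$ and $(-)\circ_3 T_j$ to reduce to the case $k=\text{root}$ and $j=\text{leaf}$ before invoking \eqref{eq_cactrel}.
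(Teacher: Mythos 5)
Your proposal is correct and follows essentially the same route as the paper: the paper likewise verifies that a reducible edge survives (and remains reducible under) each composition $T\circ_k T'$, $T'\circ_l T$ and the symmetric group action, and then exhibits any reduction as an instance of~\eqref{eq_cactrel} by deleting the edges $\ed{ki}$ and $\ed{ij}$ and recomposing the resulting pieces around the generator, writing $T = \bigl(\bigl(\bigl(T_1\circ_1 L\bigr)\circ_3 T_3\bigr)\circ_2 T_2\bigr).\sigma$ up to a relabelling permutation. Your decomposition is marginally finer (you separate the part below $k$ from the other children of $k$, where the paper keeps them in a single component $T_1$), but this is only a repackaging of the same argument.
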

\begin{proof}
Let $T\in\NAP_Y(n)$ be a $Y$-tree with reducible edge $\ed{ij}$; let $T'\in\NAP_Y(m)$ be any other $Y$-tree.
Then for any $k\in\sbrac{n}$ and $l\in\sbrac{m}$ the products
\begin{equation}
T\circ_k T'\quad\text{ and }\quad T'\circ_l T
\end{equation}
are both given by identifying vertices.  The edge $\ed{ij}$ still exists in each product although it may have been relabelled, to $\ed{i'j'}$ say.
The label in $Y$ is still the point~$\dt$.  Furthermore $i'$ is not the root in either product so $\ed{i'j'}$ is a reducible edge giving the reductions
\begin{equation}
(T\circ_k T') \sim (T\circ_k T')_{i'j'}\quad\text{ and }\quad (T'\circ_l T) \sim (T'\circ_l T)_{i'j'}.
\end{equation}
The reductions are also closed under the symmetric actions: for $\sigma\in\symn$ the edge $\ed{(i\sigma)(j\sigma)}$ is reducible in $T\sigma$.
This shows the first part and in particular that $\BCY$ is an operad.

We will now show that all reductions $T\sim T_{ij}$ are obtainable from the reduction~\eqref{eq_cactrel} of 
\begin{equation}\label{eq_123tree}
\smtree{3 \\ 2\ar[u]_\dt \\ 1\ar[u]_y}.
\end{equation}
We must show that any reducible $Y$-tree $T$, with reducible edge $\ed{ij}$ say, is contained in the ideal in $\NAP_Y$ generated by~\eqref{eq_123tree}.
Let $\ed{ki}$ be the unique edge incoming to $i$.  By applying a permutation we may assume that $k=1$, $i=2$ and $j=3$.
The essential idea of the proof is that since~\eqref{eq_123tree} is a subtree, the tree $T$ may be written as a composition of~\eqref{eq_123tree} and other $Y$-trees.
Removing the edges $\ed{12}$ and $\ed{23}$ from $T$ leaves three connected components; $T_1$ contains $1$, $T_2$ contains $2$ and $T_3$ contains $3$.
In effect we have partitioned the edge set of $T$ into $\{\ed{12},\ed{23}\}$, $E(T_1)$, $E(T_2)$ and $E(T_3)$.
Then we may express $T$ as
\begin{equation}
T = \Bigl(\Bigl(\Bigl( T_1 \circ_1 (\xymatrix@C=0.8pc{1\ar[r]^y & 2\ar[r]^\dt & 3})\Bigr) \circ_3 T_3\Bigr) \circ_2 T_2\Bigr) . \sigma,
\end{equation}
where $\sigma$ is a permutation relabelling the vertices.
\end{proof}
\begin{remark}
The Corollary~\ref{cor:NAPYquadratic} to Theorem~\ref{thm:NAP-Koszul} states that $\NAP_Y$ is binary quadratic.
Along with the Theorem above this shows that $\BCY$ is also binary quadratic.
\end{remark}

In the spirit of how we approached $\NAP_Y$-algebras, a $\BCact_Y$-algebra enriched in a symmetric monoidal category $\mathcal{C}$ is a $\NAP_Y$-algebra enriched in~$\mathcal{C}$ where the operation $\smtree{2\\1\ar[u]_\dt}$ 
is associative, and 
\begin{equation}\label{eq:BCact-via-family}
f(y,\smtree{2\\1\ar[u]_\dt}(a,b))=f(y,a)\circ f(y,b). 
\end{equation}

\subsection{The fundamental groupoid of $\BCY$}
Let $Y$ be a topological space and let $P$ be a subset of $Y$.
We define the fundamental groupoid $\pi_1(Y,P)$ to be the groupoid with objects the points $p\in P$ and morphisms the homotopy classes of paths in $Y$ which start and end in elements of $P$.
The composition is by concatenation of paths and the units are supplied by the constant paths.
So if $(Y,\dt)$ is a pointed space then $\pi_1(Y,\cbrac{\dt})$ is the fundamental group of $Y$.
Let $(Y,\dt)$ be a pointed space and let $P\in Y$ be a set of points which contains $\dt$ and such that each path connected component of $Y$ contains a single point of $P$.  This may be seen as a section of the map
\begin{equation}
(Y,\dt) \rightarrow (\pi_0(Y),\pi_0(\dt)).
\end{equation}
Then by the functoriality of $\BCact_{(-)}$ there is a pair of operad maps
\begin{equation}
\xymatrix{\BCY \ar@<0.5ex>[r] & \BCact_P, \ar@<0.5ex>[l]}
\end{equation}
which serves to pick out a single element in each path connected component of $\BCY$.
The fundamental groupoid functor preserves products and colimits and so $\pi_1(\BCY;\BCact_P)$ is an operad in the category of groupoids.

From now on we will restrict $Y$ to be a path connected space, so $P=\cbrac{\dt}$.
In this case $\BCact_P \cong \Perm$, the operad for permutative algebras~--- each of the $n$ elements is given by a corolla.
So we see that $\BCY(n)$ is made up of $n$ components and the action of $\symn$ gives isomorphisms between them.
Denote by $\BCact_Y(n)_r$ the component consisting of trees with root $r$.

\begin{proposition}\label{prop:BCYgroupoid}
The fundamental group of $\BCact_Y(n)_1$ is presented by generators $\alpha^g_{ij}$ for $i=1\clc n$, $j=2\clc n$ with $i\neq j$ and $g\in\pi_1(Y,\dt)$, along with relations
\begin{equation}\label{eq_BCGrel1}
\alpha_{ij}^g\alpha_{ij}^h = \alpha_{ij}^{gh},
\end{equation}
\begin{equation}\label{eq_BCGrel2}
\sbrac{\alpha^g_{ij},\alpha^h_{ik}}=e
\end{equation}
for distinct $i$, $j$, $k$;
\begin{equation}\label{eq_BCGrel3}
\sbrac{\alpha^g_{ij},\alpha^h_{kl}}=e
\end{equation}
for distinct $i$, $j$, $k$, $l$; and
\begin{equation}\label{eq_BCGrel4}
\sbrac{\alpha^g_{ij}\alpha^g_{ik},\alpha^h_{jk}}=e
\end{equation}
for distinct $i$, $j$, $k$.
\end{proposition}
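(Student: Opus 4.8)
The plan is to compute $\pi_1(\BCY(n)_1)$ by induction on $n$ via a Fadell--Neuwirth-style fibration that forgets the last lobe, mirroring the classical computation of pure braid and loop-braid groups. For $n=1$ the space is a point, and for $n=2$ we saw $\BCY(2)_1\cong Y$, so $\pi_1=\pi_1(Y,\dt)$ is presented by the single family $\alpha^g_{12}$ subject to \eqref{eq_BCGrel1}; the remaining relations are vacuous, so the assertion holds for $n\le2$.

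For the inductive step I would use the map $p\colon\BCY(n)_1\to\BCact_Y(n-1)_1$ that collapses the lobe $Y_n$ to its attaching point --- equivalently, grafts the children of $n$ onto the parent of $n$ and then deletes the leaf $n$. After checking that $p$ is well defined on reduction classes and is a fibration (a local triviality of the same nature as the Fadell--Neuwirth fibrations for configuration spaces), I would exhibit the continuous section sending $T'$ to $T'$ with a new leaf $n$ attached to the root $1$ by an edge labelled $\dt$; this is always irreducible since a root edge never reduces, and collapsing $Y_n$ recovers $T'$. The section makes $p_\ast$ surjective on $\pi_2$, so the connecting map into $\pi_1$ of the fibre $F$ vanishes and the fibration yields a split short exact sequence
\begin{equation*}
1\longrightarrow \pi_1(F)\longrightarrow \pi_1(\BCY(n)_1)\longrightarrow \pi_1(\BCact_Y(n-1)_1)\longrightarrow 1,
\end{equation*}
presenting $\pi_1(\BCY(n)_1)$ as a semidirect product.

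The new generators are exactly those involving the index $n$: the leaf families $\alpha^g_{in}$ for $i\in\cbrac{1\clc n-1}$, where lobe $n$ hangs off lobe $i$ and its attaching point traverses $g$, and the adoption families $\alpha^g_{nj}$ for $j\in\cbrac{2\clc n-1}$, where lobe $j$ is instead attached to the newly inserted lobe $n$. I would show that $\pi_1(F)$ is presented by these families with the relations \eqref{eq_BCGrel1} internal to each family together with the commutations \eqref{eq_BCGrel2} among the adoption families, which share the source $n$. Pinning down this fibre presentation --- in particular the ``blow-up'' at the wedge point of the corolla, where $n$ may simultaneously pick a parent and adopt children --- is one delicate point; it is safest to treat it either by a nested induction or by a direct van Kampen argument on the stratification of $F$ by underlying tree.

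It then remains to read off the relations mixing old and new generators from the monodromy action of the base group on $\pi_1(F)$. Transporting lobe $j$ once around the loop $g$ inside lobe $i$ (the base generator $\alpha^g_{ij}$) acts on a new generator according to the position of lobe $n$: if $n$ rests on a lobe disjoint from $\cbrac{i,j}$ the action is trivial, giving \eqref{eq_BCGrel3}; if $n$ shares the carrying lobe $i$, the two independent attaching points still commute, giving \eqref{eq_BCGrel2}; and if $n$ rides on the moving lobe $j$, then from the basepoint lobe $n$ is itself dragged once around $g$ inside $i$, so $\alpha^h_{jn}$ is conjugated by $\alpha^g_{in}$ --- which is exactly the triangle relation \eqref{eq_BCGrel4}. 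Combining the fibre relations, the base relations supplied by the inductive hypothesis, and these conjugation relations through the standard presentation of a split extension gives the stated presentation. The main obstacle is precisely this monodromy computation, above all the verification that carrying a sub-cactus once around a loop agrees with sending its root lobe around that loop, which is the geometric content of \eqref{eq_BCGrel4}; the base-point bookkeeping here, together with the analogous computation inside the fibre, is where the genuine work lies.
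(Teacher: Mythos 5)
Your route is genuinely different from the paper's, but it hinges on a step that is unsupported in the stated generality and would fail for general $Y$: the claim that the forgetting map $p\colon\BCY(n)_1\to\BCact_Y(n-1)_1$ is a fibration. Here $Y$ is an arbitrary path-connected pointed space, and $\BCY(n)$ is a colimit of products of copies of $Y$ glued along diagonal maps; it is not a configuration space of a manifold, so there is no Fadell--Neuwirth local triviality to invoke. Concretely, for $n=3$ the fibre of $p$ over the cactus with lobe $2$ attached at $y_0\in Y$ is a copy of $Y$ (positions of lobe $3$ on lobe $1$) with two further copies of $Y$ glued on at the \emph{moving} point $y_0$ (lobe $3$ riding on lobe $2$, and lobe $2$ adopted by lobe $3$); whether nearby fibres are even locally homeomorphic, let alone whether $p$ is a Serre fibration or a quasifibration, depends on the local topology of $Y$ at $y_0$, and no homogeneity or local niceness is assumed. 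The paper avoids this entirely: it realises $\BCY(n)_1$ as an explicit colimit of $\NAP_Y(n)_1$ together with one copy of $Y^{n-2}$ for each pair $(T,\ed{ij})$ encoding a reduction, applies the fundamental groupoid functor (which converts that colimit of spaces into a colimit of groupoids), and then extracts the presentation by a finite combinatorial argument using the level function --- uniformly in $n$, with no induction and no fibration.

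Even granting a nice $Y$, the two steps you flag as delicate are where the content lies, and one is understated. A presentation of a split extension requires expressing the conjugate of \emph{each} fibre generator by \emph{each} base generator as an explicit word in fibre generators. For the adoption generators $\alpha^g_{nj}$ the only stated relation coupling them to the base generator $\alpha^h_{jk}$ is \eqref{eq_BCGrel4} with first index $n$, which centralises only the product $\alpha^g_{nj}\alpha^g_{nk}$; the individual monodromy $\alpha^h_{jk}\alpha^g_{nj}\alpha^{h^{-1}}_{jk}$ is not among \eqref{eq_BCGrel1}--\eqref{eq_BCGrel4}, so it must be computed geometrically and then eliminated by Tietze moves before the stated presentation emerges --- your monodromy discussion only treats the leaf generators. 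Moreover, for $n\geq 4$ the fibre over the base corolla is itself a cactus-like union of copies of $Y$, so computing $\pi_1(F)$ is a problem of exactly the kind being solved, and the proposed ``nested induction'' risks circularity unless organised with care. By contrast, the paper proves sufficiency of the relations directly, reducing an arbitrary commutator $[g_{(T,\ed{ij})},h_{(T,\ed{kl})}]$ to the stated relations via the decomposition \eqref{eq_fullreduction}. As written, your proposal establishes the proposition neither in the paper's generality nor, without substantial additional work, for a restricted class of spaces.
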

\begin{proof}
We defined the cactus operads $\BCY$ by adding certain relations $T\sim T_{ij}$ for trees $T$ with a reducible edge $\ed{ij}$.
The relations come in families: for a fixed tree $T$ with a fixed edge $\ed{ij}$ where $i$ is not the root, a $Y$-tree is reducible if $\ed{ij}$ is labelled by the point $\dt\in Y$ and the remaining $n-2$ edges are labelled by any element in $Y$, so there is a family of relations parametrised by $\cbrac{\dt}\times Y^{n-2}$.
Each element in this family encodes a reduction $T\sim T_{ij}$:  there is one map from $Y^{n-2}$ corresponding to $T$ and another map from $Y^{n-2}$ corresponding to $T_{ij}$.  
For the second map the diagonal $y\mapsto (y,y)$ is used to define the new labelling.
So for each such tree $T$ with edge $\ed{ij}$ there are a pair of maps
\begin{equation}\xymatrix{
Y^{n-2}\ar@<0.5ex>[r]\ar@<-.5ex>[r] & \NAP_Y(n).}
\end{equation}
In identifying the two images of each point we are taking the coequaliser of this diagram.
But we have such an identification for each tree $T$ with an edge $\ed{ij}$ where $i$ is not the root.
So we have a diagram with a copy of $Y^{n-2}$ for each such pair $(T,\ed{ij})$ and two arrows from each copy to a single copy of $\NAP_Y(n)$.
The colimit of this diagram is the space given by making all identifications $T\sim T_{ij}$ -- that is, the colimit is $\BCY(n)$.

We will use $G$ to denote the group $\pi_1(Y,P)$.
The fundamental groupoid functor $\pi_1$ respects colimits and products, so in particular respects polynomial functors meaning that $\pi_1(\NAP_Y,\NAP_P) \cong \NAP_G$.
Furthermore 
\begin{equation}
\BCact_G(n) := \pi_1(\BCY(n),\BCact_P(n))
\end{equation}
is given by the colimit of the diagram which consists of a single copy of $\NAP_{G}(n)$ and a copy of $G^{n-2}$ for each pair $(T,\ed{ij})$.
It now remains to compute this colimit.

Restricting ourselves to trees with root $1$, we have that $\BCact_Y(n)_1$ is the colimit of the diagram where $\NAP_Y(n)$ is replaced by $\NAP_Y(n)_1$ and we only include pairs $(T,\ed{ij})$ where the root of $T$ is 1.
Since $\BCact_\dt(n)_1$ is a single point, $\BCY(n)_1$ is connected and $\BCact_{G}(n)_1$ has a single object and so may be viewed as a group.

We will now examine the effect of coequalisers on morphisms.  A generic morphism of $\NAP_G(n)_1$ consists of a rooted tree $T\in\RT(n)_1$ with edge labels $g_e\in G$ for each $e\in E(T)$.
But since such elements belong to a component of $\NAP_G(n)$ isomorphic to $G^{n-1}$ they can be rewritten as the product of $n-1$ elements, one for each edge.
The element corresponding to $e\in E(T)$ is given by labelling edge $e$ by $g_e$ and every other edge of $T$ by the identity.
We will denote such an element by $g_{(T,\ed{ij})}$, which is the tree $T$ with edge $\ed{ij}$ labelled by $g\in G$.

The coequalisers encode reductions just as before.  Let $g_{(T,\ed{vw})}\in\NAP_G(n)_1$ be a generator where $g\in G$, $T\in\RT(n)_1$ and $\ed{vw}\in E(T)$ is any edge.
Let $\ed{ij}$ be another edge, this time we ask that $i$ is not the root $1$; this will be the edge we will reduce over.
As before let $\ed{ki}$ be the unique incoming edge to $i$ and let $T_{ij}$ be the tree given by cutting $\ed{ij}$ and adding $\ed{kj}$.
The element $g_{(T,\ed{vw})}$ may be reduced when the label of $\ed{ij}$ is the identity, that is if $\ed{vw}\neq\ed{ij}$: in the case that $\ed{vw}=\ed{ki}$ the reduced tree has edges $\ed{ki}$ and $\ed{kj}$ labelled by $g$ and the remaining edges labelled by the identity.
In all other cases the single edge $\ed{ki}$ is labelled by $g$ with the remaining edges labelled by the identity.
So if $\ed{vw}=\ed{ki}$ we have $g_{(T,\ed{ki})} = g_{(T_{ij},\ed{ki})}.g_{(T_{ij},\ed{kj})}$ and otherwise $g_{(T,\ed{ki})} = g_{(T_{ij},\ed{ki})}$.
Remember that $\ed{ij}$ must be a reducible edge.

The reductions above allow (using the fact that reduction reduces the level) any element in $\BCact_G(n)$ to be written as a product of elements $g_{(T,\ed{ij})}$ where $T$ is a tree with no identity labelled reducible edges.
The possibilities are that $T$ is a corolla and so $i=1$, or that $T$ is the tree with $n-2$ edges emanating from the root $1$ and the only other edge being $\ed{ij}$.
So for each pair $(i,j)$ where $i,j\in\sbrac{n}$, $i\neq j$ and $j\neq 1$, there is a unique tree $T$ such that the pair $(T,\ed{ij})$ is irreducible.
Therefore we may denote the element $g_{(T,\ed{ij})}$ by $\alpha^g_{ij}$ and these elements generate the group $\BCact_G(n)_1$.

Let $T\in\RT(n)$, $\ed{ij}$ be any edge and $g\in G$, we may write the element $g_{(T,\ed{ij})}$ as a monomial in the generators above as follows.
Let $A_{ij}$ be the set of vertices $v$ which may be joined by a directed path from $i$ to $v$ starting in the edge $\ed{ij}$.  Then $g_{(T,\ed{ij})}$ reduces to the product
\begin{equation}\label{eq_fullreduction}
\prod_{v\in A_{ij}} \alpha^g_{iv}.
\end{equation}

It remains to find the relations between the generators.  
Some of the relations are contributed by the components of $\NAP_G(n)_1$ corresponding to the irreducible pairs $(T,\ed{ij})$.  
The relations $\alpha^g_{ij}.\alpha^h_{ij} = \alpha^{gh}_{ij}$ come from their respective components, these account for~\eqref{eq_BCGrel1}.
Then there are the relations $[\alpha^g_{1i},\alpha^h_{1j}]$ for $i\neq j$, which exist in the component of $\NAP_G(n)_1$ corresponding to the corolla, these account for some of the relations in~\eqref{eq_BCGrel2}, specifically the relations for $i=1$.
Denote by $T(ij)$ the tree with edges $\ed{1k}$ for $k\neq j$ and edge $\ed{ij}$, then this contributes the relations
\begin{equation}
\sbrac{\alpha^g_{ij}, h_{(T(ij),\ed{1k})}}=e.
\end{equation}
But the second element is reducible: in the case $k\neq i$ it reduces to $\alpha^h_{1k}$, whilst in the case $k=i$ it reduces to $\alpha^h_{1i}.\alpha^h_{1j}$.

However these are not all of the relations, additional commutation relations come from other trees $T$.
Let $T(ij,ik)$ be the tree with the edges $\ed{ij}$ and $\ed{ik}$ and edges $\ed{1l}$ for $l\neq j,k$.
This tree encodes commutator brackets
\begin{equation}
\sbrac{g_{(T(ij,ik),\ed{ij})}, h_{(T(ij,ik),\ed{ik})}}=e,
\end{equation}
the elements reduce to $\alpha^g_{ij}$ and $\alpha^h_{ik}$ respectively.
Similarly for distinct $i,j,k,l\neq 1$ let $T({ij},{kl})$ be the tree with edges $\ed{ij}$ and $\ed{kl}$ and edges $\ed{1m}$ for $m\neq j,l$; as above this encodes a commutator relation:
\begin{equation}
\sbrac{g_{(T(ij,kl),\ed{ij})}, h_{(T(ij,kl),\ed{kl})}}=\sbrac{\alpha^g_{ij},\alpha^h_{kl}} = e.
\end{equation}
Finally let $T(ij,jk)$ be the tree with edges $\ed{ij}$ and $\ed{jk}$ and edges $\ed{1l}$ for $l\neq j,k$.
This tree gives the commutator relations
\begin{equation}
\sbrac{g_{(T(ij,jk),\ed{ij})}, h_{(T(ij,jk),\ed{jk})}}=e.
\end{equation}
The second element reduces to $\alpha^h_{jk}$ and the first to the product $\alpha^g_{ij}.\alpha^g_{ik}$.
This accounts for all of the relations in the statement of the proposition.

To show that the stated relations are sufficient to present the group we need to show that the commutator relations
\begin{equation}
\sbrac{g_{(T,\ed{ij})}, h_{(T,\ed{kl})}}=e
\end{equation}
hold for each tree $T$ and each pair of edges $\ed{ij},\ed{kl}$.
Let $A_{ij}$ and $A_{kl}$ be the sets of vertices which index the respective decompositions of the form~\eqref{eq_fullreduction}.
If $A_{ij}$ and $A_{kl}$ are disjoint then commutator relations of the form~\eqref{eq_BCGrel2} and~\eqref{eq_BCGrel3} show that all the constituent irreducible elements commute with one another.
In the case that $A_{ij}$ and $A_{kl}$ do intersect there must be either a directed path from $i$ to $k$ or from $k$ to $i$.
Assuming the former we find that $A_{ij}$ contains $A_{kl}$.
We now show that each $\alpha^h_{kv}$ for $v\in A_{kl}$ commutes with $g_{(T,\ed{ij})}$.
Since both $k$ and $v$ are in $A_{ij}$ the element $\alpha^g_{ik}.\alpha^g_{iv}$ is a term in $g_{(T,\ed{ij})}$, the relation~\eqref{eq_BCGrel4} means that $\alpha^h_{kv}$ commutes with this term.
The remaining terms are of the form $\alpha^g_{iw}$ for $w\neq k,v$ which also commutes with $\alpha^h_{kv}$.  Therefore $\alpha^h_{kv}$ commutes with the element $g_{(T,\ed{ij})}$; and therefore $h_{(T,\ed{kl})}$ commutes with it as well.

Therefore the relations~\eqref{eq_BCGrel1}-\eqref{eq_BCGrel4} suffice to present $\BCact_G(n)_1$.
\end{proof}
We have already seen that $\pi_1(\BCY,\BCact_P)$ is an operad, to give the composition maps we need only describe the compositions on the generating morphisms.
In fact since we have $g\circ_i h = (g\circ_i e).(e\circ_i h)$ we need only describe the compositions of generators with identity maps.
\begin{proposition}
Let $(Y,\dt)$ be a path connected pointed space and let $G$ be its fundamental group.
The operad structure on $\pi_1(\BCY,\BCact_P)$ is given on generating morphisms as follows: let $\alpha^g_{ij} \in\BCact_G(n)_r$ and $e\in\BCact_G(m)_s$ be the identity morphism.
For $a\in\sbrac{m}$ define $i'= i + a - 1$ and $j'=j + a - 1$, then we have
\begin{equation}
e\circ_a\alpha^g_{ij} = \alpha^g_{i'j'}.
\end{equation}
For $b\in\sbrac{n}$ define $i''$ to be $i$ if $i < b$, to be $i+m-1$ if $i>b$ and $i+s-1$ if $i=b$; define $j''$ similarly.
For each $l\in\sbrac{m}$ define $l''$ to be $l + b - 1$.
Then we have
\begin{equation}
\alpha^g_{ij}\circ_b e = \begin{cases}  
\prod_{l=1}^m \alpha^g_{i''l''} & \text{ if $b=j$, and }\\
\alpha^g_{i''j''} & \text{ otherwise. }
\end{cases}
\end{equation}
\end{proposition}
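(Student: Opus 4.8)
The plan is to compute both composition formulas directly from the topological operad structure on $\BCY$, of which $\pi_1(\BCY,\BCact_P)$ is the image under the (product- and colimit-preserving) fundamental groupoid functor. As recorded just above, the interchange law $g\circ_i h=(g\circ_i e).(e\circ_i h)$ reduces everything to composing a single generating loop with an identity, and the identity morphism $e\in\BCact_G(m)_s$ is represented by the constant loop at the corolla basepoint $e_0\in\BCact_P(m)_s$. Thus $e\circ_a\alpha^g_{ij}$ and $\alpha^g_{ij}\circ_b e$ are each represented by inserting the fixed cactus $e_0$ and a representative loop of $\alpha^g_{ij}$ into one another at the level of $\NAP_Y$; the work is to track the vertex relabelling built into the partial composition and then re-express the resulting loop through Proposition~\ref{prop:BCYgroupoid}.

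For the first formula I would insert the moving cactus representing $\alpha^g_{ij}$ into the fixed corolla $e_0$ at vertex $a$. The relabelling in the definition of $\circ_a$ sends a vertex $v$ of the $\alpha$-factor to $v+a-1$, so in particular $i\mapsto i'$ and $j\mapsto j'$. Since $e_0$ is constant, the whole loop comes from the $\alpha$-factor and still transports the single (relabelled) leaf $j'$ around the $g$-loop based at $i'$; hence $e\circ_a\alpha^g_{ij}=\alpha^g_{i'j'}$. The only thing to check carefully is that the relabelling, including the image of the root $s$, places us in the correct component and leaves $j'$ distinct from the new root, so that $\alpha^g_{i'j'}$ is a bona fide generator.

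For the second formula I would insert the fixed corolla $e_0$ into the moving cactus at vertex $b$ and track the resulting labels $i''$, $j''$ and $l''$ as in the statement. When $b\neq j$ the moved leaf $j$ is untouched (merely relabelled to $j''$), so the loop again transports a single leaf and equals $\alpha^g_{i''j''}$. The essential case is $b=j$: now the moved leaf is replaced by the entire corolla $e_0$, so the loop transports the whole inserted subtree, whose vertex set is $\cbrac{l'' : l\in\sbrac{m}}$, around the $g$-loop based at $i''$. In the notation of the proof of Proposition~\ref{prop:BCYgroupoid} this is the element $g_{(T',\ed{i''s''})}$ whose distinguished edge runs from $i''$ to the relabelled root $s''$ of $e_0$; applying the reduction formula~\eqref{eq_fullreduction}, with $A$ equal to the set of descendants of $s''$—which is exactly $\cbrac{l'' : l\in\sbrac{m}}$—yields $\prod_{l=1}^m\alpha^g_{i''l''}$, the product being order-independent by the commutation relation~\eqref{eq_BCGrel2}. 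I expect this $b=j$ case to be the main obstacle: one must pin down the precise relabelling of the partial composition, correctly identify the transported-subtree loop with its tree-generator, and then invoke the reduction formula; the remaining cases and the first formula are then routine bookkeeping.
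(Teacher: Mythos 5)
Your proposal is correct and takes essentially the same route as the paper: both represent $\alpha^g_{ij}$ by the tree with a single $g$-labelled edge and the identity by the corolla with identity-labelled edges, compose in $\NAP_G$ while tracking the vertex relabelling, and finish with the reduction formula~\eqref{eq_fullreduction}, the $b=j$ case giving the set $\cbrac{l'':l\in\sbrac{m}}$ (which correctly contains $j''=s''$, since the reduction set includes the endpoint of the distinguished edge) and hence the product $\prod_{l=1}^m\alpha^g_{i''l''}$. Your topological phrasing in terms of loops versus the paper's direct computation with $G$-labelled trees is only a cosmetic difference, justified by the colimit- and product-preservation of the fundamental groupoid functor that you cite.
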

\begin{proof}
Let $T(ij)_r$ be the tree with root $r$, the edge $\ed{ij}$ and $(n-2)$ edges $\ed{rk}$ (if $i=r$ then this is a corolla).
Then $\alpha^g_{ij}$ is represented by the tree $T(ij)_r$ with $\ed{ij}$ labelled by $g\in G$.
Let $C_s$ be the corolla with root $s$ and $m-1$ edges $\ed{sk}$.
When all of the edges are labelled by the identity $e\in G$ then this represents the identity $e$ of $\BCact_G(m)_s$.

To compute $e \circ_a \alpha^g_{ij}$ we compose trees to get $C_s \circ_a T(ij)_r$ and then reduce using Equation~\eqref{eq_fullreduction}.
The unique labelled edge $\ed{ij}$ of $T(ij)_r$ is a leaf and hence it is also a leaf of $C_s \circ_a T(ij)_r$, although now the edge is $\ed{i'j'}$.
Since it is a leaf it reduces to $\alpha^g_{i'j'}$ as required.

To compute $\alpha^g \circ_b e$ is a little more complicated as it depends on the value of $b$.
If $b\neq j$ then the leaf $\ed{ij}$ is still a leaf of $T(ij)_r\circ_b C_s$ and so the same argument applies to give the reduction to $\alpha^g_{i''j''}$.
However if $b=j$ then the tree consists of the edge $\ed{i''j''}$, another $n-2$ edges emanating from the root and $m-1$ edges $\ed{j''l''}$.  
The only labelled edge is $\ed{i''j''}$ and the set $A_{ij}$ of vertices `above $i$' consists of the vertex $j''=s''$ and the vertices $l''$ for each edge $\ed{jl}\in C_s$.
An application of Equation~\eqref{eq_fullreduction} serves to finish the proof.
\end{proof}
\begin{remark}
The groups $\BCact_G(n)_r$ act faithfully on the free product $G^{\ast n}$.  We will write this free product as $G_1\ast\ldots\ast G_n$ where each group is isomorphic to $G$ in order to distinguish between different factors. 
The element $\alpha_{ij}^g$ acts on the factors as follows
\begin{equation}
\alpha_{ij}^g (h) = \begin{cases}  h^{g_i^{-1}} & \text{ if $h\in G_j$ and where $g_i=g$ in $G_i$ and} \\
h & \text{ if $h\in G_k$ for $k\neq j$.}
\end{cases}
\end{equation}
In~\cite{Griffin} the closely related spaces of unbased cacti $\Cact_\bdY$ were studied and it was shown that when $Y_i$ is a classifying space for $G_i$ then $\Cact_\bdY$ is itself a classifying space for a certain group of automorphisms.
As a consequence of Theorems~\ref{thm:homology-BCY} and~\ref{thm:cacti-koszul} we see that
\begin{equation}
H_\ast(\BCY) \cong \Perm\circ \NAP_{\overline{H}(Y)},
\end{equation}
whereas in~\cite{Griffin} it is shown that 
\begin{equation}
H_\ast(\Cact_Y) \cong \Com\circ \NAP_{\overline{H}(Y)}.
\end{equation}
This last isomorphism could also be shown using the methods of reduction used in this paper, although $\Cact_Y$ is not an operad.
\end{remark}

\subsection{Relationships with other topological operads}
The pure braid group on $n$ strands, $P_n$ is known to be a subgroup of the group $P\Sigma_n\cong\pi_1(\Cact_Y(n))$ of partial conjugations of the free group on $n$ letters. This inclusion may be realised by a construction involving cacti. In~\cite{Kaufmann05} various (quasi-)operads of cacti are discussed; these are different to the operad $\BCact_{S^1}$ in that the cacti are planar and unbased.  We will take $\PLC$ to be the spineless and normalised varieties of cacti from~\cite{Kaufmann05}. This quasi-operad is quasi-isomorphic to the little discs operad and so in particular the fundamental group $\pi_1(\PLC(n))$ is the pure braid group $P_n$. There is an $\symn$-equivariant map 
\begin{equation}
\PLC(n) \rightarrow \Cact_{S^1}(n)
\end{equation} 
defined by the map which forgets the planar structure of a planar cactus leaving a cactus product of circles as defined in~\eqref{eq_cactusproduct}; on fundamental groups this gives the inclusion $P_n\rightarrow P\Sigma_n$.
The operad compositions of $\BCact_{S^1}$ and $\PLC$ are not closely related, this may be seen by examining the homology operads which are $\BCact_{H_*(S^1)}$ as defined in the next section and the Gerstenhaber operad $e_2$.

However both families of cacti are related by a third operad which `contains' both. Let $\LR(n)$ be the space of smooth, disjoint embeddings of $n$ copies of the filled in torus, or \emph{ring} $R=S^1\times D^2$ into itself~--- this is naturally an operad. The little discs operad consists of disjoint embeddings of copies of a disc $D^2$ into itself and can be mapped into the little rings operad $\LR$ by applying $\text{id}_{S^1}\times (-)$ to the embeddings.
The image of the little discs operad involves little rings which wind around the large ring once. Meanwhile the operad $\BCact_{S^1}$ is related to the connected components of embeddings in which one little ring, the \emph{root} winds around the large ring once; the remaining rings do not wind around the large ring and all of the rings are unknotted and unlinked. The fundamental groups of these connected components contain $\pi_1(\BCact_{S^1})\cong \BCact_\mathbb{Z}$ as a suboperad.
There are additional elements not in the suboperad given by little rings circling through the large ring along with smooth endomorphisms of $R$.

\section{The homology operads}  \label{sec:homology}
So far we have described operads $\NAPY$ and $\BCY$ in the ``geometric'' setting.  
Both families also have versions existing in the ``linear'' setting, so for any graded vector space $D$ there exists an operad $\NAP_D$, whereas in the case of the based cacti there is a subtlety, we require a graded augmented cocommutative coalgebra $C$ to define $\BCact_C$. The ``geometric'' and ``linear'' versions are closely related via the homology functor which sends a topological space to its homology groups with coefficients in the base field $\k$. 
In this section, we shall describe these operads via constructions with decorated rooted trees, and later in section~\ref{sec:Koszul}, we shall descibe them via generators and relations, and show that it in fact each of them has a quadratic Gr\"obner basis of relations.

\subsection{The linear operad $\NAP_D$}
Let $D$ be a graded vector space (over some field $\k$).  Recall that in~\eqref{eq_NAPYpoly} we described $\NAPY(n)$ as disjoint union of direct products of copies of $Y$.
Then in Remark~\ref{rem_polynomial} we gave a polynomial diagram~\eqref{eq_NAPpolynomial} realising $\NAPY(n)$ as a polynomial functor in $Y$. 
Let $D$ be a graded vector space and define $\NAP_D$ via the same polynomial diagram in the category of graded vector spaces:
\begin{equation}\label{eq_NAPDpoly}
\NAP_D(n) = \bigoplus_{T\in\RT(n)} D^{\otimes (n-1)}.
\end{equation}
Equivalently $\NAP_D(n)$ is the vector space spanned by rooted trees with vertex set $\sbrac{n}$ and edge labels in $D$, subject to linearity in each edge label. 

The set based description of the $\NAPY$ operad works on the level of polynomial functors and so suffices to show that $\NAP_D$ is an operad.
However great care must be taken to keep track of the signs induced by the symmetry $\sigma$ from the symmetric monoidal category $(\gVect,\otimes,\sigma,\k)$ of graded vector spaces.
In order to do this we must assign for each term $D^{\otimes n-1}$ in the sum~\eqref{eq_NAPDpoly} a reference ordering of the factors.
This requires assigning to each tree $T\in\RT(n)$ a total ordering on the set of edges $E(T)$. 
Let $T$ be such a tree and let $i$ be its root.
Since each vertex has a unique incoming edge except for the root which has none, the set of edges $E(T)$ is in bijection with the set of non-root vectices $\sbrac{n}-i$.
We take the ordering of $E(T)$ from the natural ordering of $\sbrac{n}-i$.
So for instance the pair
\begin{equation}\label{eq_Tlabelling}
\brac{\smtree{2 && 3\\ &1\ar[ur]\ar[ul] & \\ &4 \ar[u]&}, x\otimes y\otimes z}\quad\text{represents the $Y$-tree}\quad\smtree{2 && 3\\ &1\ar[ur]_z\ar[ul]^y & \\ &4 \ar[u]^x&}.
\end{equation}
The order of $x,y$ and $z$ in the tensor product is determined by the order of the edges.

The first step in giving the operad structure is to describe the action of the symmetric group $\symn$ on $\NAP_D(n)$.
For instance applying the permutation $(24)$ to the $Y$-tree considered in~\eqref{eq_Tlabelling} we get
\begin{multline}
(24). \brac{\smtree{2 && 3\\ &1\ar[ur]\ar[ul] & \\ &4 \ar[u]&}, x\otimes y\otimes z} =\\= 
\brac{\smtree{3 && 4\\ &1\ar[ur]\ar[ul] & \\ &2 \ar[u]&}, (23)x\otimes y\otimes z} 
=(-1)^{|y||z|}\smtree{3 && 4\\ &1\ar[ur]_y\ar[ul]^z & \\ &2 \ar[u]^x&}.
\end{multline}

The signs involved in the composition $T\circ_i T'$ for $T\in\NAP_D(n)$ and $T'\in\NAP_D(m)$ are more easily accounted for.  
This is because the edges within the righthand tree $T'$ are not reordered within $T\circ_i T'$ and so the sign depends on the total degree $|T'|$ and not on the individual edges.
The edges of $T'$ are `moved past' the edges $\ed{jk}\in E(T)$ for which $k>i$.  Hence if $y_{jk}$ is the labelling of $\ed{jk}$ the sign change is given by
\begin{equation}
(-1)^{\displaystyle |T'|\Bigl(\sum_{\ed{jk}\in E(T)\mid k > i} |y_{jk}|\Bigr)}.
\end{equation}

\begin{proposition}
The homology operad $H_\ast(\NAPY)$ with coefficients in the base field $k$ is isomorphic to the linear operad $\NAP_{H_\ast(Y)}$.
\end{proposition}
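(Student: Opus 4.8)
The plan is to deduce the statement directly from the fact that both $\NAPY$ and $\NAP_D$ arise from one and the same polynomial recipe on rooted trees, combined with the observation that singular homology with field coefficients is a \emph{symmetric} monoidal functor. First I would recall that, over a field $\k$, the Künneth theorem provides a natural isomorphism $H_\ast(X\times Z)\cong H_\ast(X)\otimes H_\ast(Z)$ with no correction terms, making $H_\ast(-;\k)\colon(\mathrm{Top},\times,\sigma)\to(\gVect,\otimes,\sigma)$ symmetric monoidal. The crucial point is that the topological swap $X\times Z\to Z\times X$ is carried to the Koszul-signed swap $H_\ast(X)\otimes H_\ast(Z)\to H_\ast(Z)\otimes H_\ast(X)$, and that $H_\ast$ sends disjoint unions to direct sums.

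Next I would apply $H_\ast$ to the decomposition~\eqref{eq_NAPYpoly}. Since homology turns coproducts into direct sums and, by Künneth, turns $Y^{n-1}$ into $H_\ast(Y)^{\otimes(n-1)}$, I obtain
\begin{equation}
H_\ast(\NAPY(n)) \;\cong\; \bigoplus_{T\in\RT(n)} H_\ast(Y^{n-1}) \;\cong\; \bigoplus_{T\in\RT(n)} H_\ast(Y)^{\otimes(n-1)} \;=\; \NAP_{H_\ast(Y)}(n),
\end{equation}
matching the definition~\eqref{eq_NAPDpoly} with $D=H_\ast(Y)$. Here I would be careful to use, for each tree $T$, the very reference ordering of $E(T)$ fixed in the definition of $\NAP_D$, so that the iterated Künneth isomorphism respects the chosen ordering of tensor factors term by term. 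This already yields the isomorphism of the underlying graded $\mathbb{S}$-modules as vector spaces.

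It remains to check that this isomorphism intertwines the symmetric group actions and the operadic compositions. As stressed in Remark~\ref{rem_polynomial}, both operads are produced by the identical polynomial functor~\eqref{eq_NAPpolynomial}, and their structure maps are defined entirely in terms of relabelling vertices, concatenating edge sets, and reordering the factors of $Y^{n-1}$ (respectively $D^{\otimes(n-1)}$). Applying the symmetric monoidal functor $H_\ast$ to the topological structure maps therefore produces the linear ones, because $H_\ast$ preserves coproducts and commutes with every construction assembled from $\times$ and $\sigma$. Concretely, the sign $(-1)^{|y||z|}$ appearing in the $\symn$-action on $\NAP_D$ and the composition sign $(-1)^{|T'|(\sum_{k>i}|y_{jk}|)}$ are exactly the Koszul signs produced by Künneth when the corresponding topological reorderings of coordinates are pushed through homology.

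The main obstacle is precisely this sign bookkeeping: one must verify that the signs built by hand into the $\symn$-action and the $\circ_i$ of $\NAP_D$ coincide with those generated by the symmetry constraint of $H_\ast$. This reduces to the single naturality statement that the swap of two factors of $Y$ induces the graded swap on $H_\ast(Y)\otimes H_\ast(Y)$, that is, to the symmetry (not merely monoidality) of the Künneth isomorphism. Granting this standard fact, compatibility of all higher compositions and permutations follows formally from the coherence of symmetric monoidal functors, since everything in sight is assembled from the polynomial diagram~\eqref{eq_NAPpolynomial}.
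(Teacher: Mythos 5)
Your proposal is correct and follows essentially the same route as the paper: the paper's proof likewise invokes the fact that $H_\ast$ with field coefficients respects products and coproducts (hence polynomial functors) and writes out the resulting isomorphism $H_\ast(\NAPY(n))\cong\bigoplus_{T\in\RT(n)}H_\ast(Y)^{\otimes E(T)}$, leaving the symmetric-monoidal coherence and Koszul-sign verification implicit where you spell it out. Your added care with the reference ordering of $E(T)$ and the matching of the hand-built signs in $\NAP_D$ against the symmetry of the K\"unneth isomorphism is exactly the bookkeeping the paper's terse proof takes for granted.
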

\begin{proof}
With field coefficients the homology functor $H_\ast$ from topological spaces to graded vector spaces respects products and coproducts and so is compatible with polynomial functors.
The explicit expression of this is
\begin{equation}
H_\ast\brac{\NAPY(n)} \cong H_\ast\Bigl(\coprod_{T\in\RT(n)} Y^{E(T)}\Bigr) \cong \bigoplus_{T\in\RT(n)} H_\ast(Y)^{\otimes E(T)}.
\end{equation}
\end{proof}

\subsection{The linear operads of based cacti}
Let $C$ be an augmented cocommutative coalgebra and write its splitting as $\k\one\oplus\overline{C}$.
The operad $\BCact_C$ will be a quotient of the operad $\NAP_C$, this is a parallel of the set-based versions.
Let $T\in\NAP_C$ be a $C$-labelled rooted tree and suppose that it has an edge $\ed{ij}$ with the label $\one$ and suppose further that $i$ is not the root of $T$, as before we will call the edge $\ed{ij}$ reducible.
Let $k$ be the unique vertex such that $\ed{ki}$ is an edge and let $c$ be the label of $\ed{ki}$.
We define $T'$ to be the unlabelled rooted tree created by removing the edge $\ed{ij}$ and replacing it by $\ed{kj}$ and denote by $T'(a,b)$ the edge labelled rooted tree based on $T'$ where the edge labels are inherited from those of $T$ except for $\ed{ki}$ which is labelled by $a$ and $\ed{kj}$ which is labelled by $b$. 
Finally we define $T_{ij}$ to be the sum
\begin{equation}
\sum (-1)^{|c_{(2)}|g}\,T'(c_{(1)},c_{(2)}),
\end{equation}
where $g$ is the sum of degrees
\begin{equation}
\sum_{\ed{xy}\mid i<y<j}|a_{xy}|
\end{equation}
and $a_{xy}$ is the label of the edge $\ed{xy}$.
The sign is given by the moving of the label $c_{(2)}$ from being adjacent to $c_{(1)}$ as in~$\Delta(c)=\sum c_{(1)}\otimes c_{(2)}$ to being in the relevant position to label the edge $\ed{kj}$.
As before $T_{ij}$ is called the reduction of $T$ at the reducible edge $\ed{ij}$ and just as before each $C$-tree reduces to a unique irreducible $C$-tree.
\begin{definition}
The graded vector space of linear based $C$-cacti, $\BCact_C$ is defined by factoring out from $\NAP_C$ the relations
\begin{equation}
T- T_{ij} = 0
\end{equation}
for trees $T$ with an edge $\ed{ij}$ labelled by $\one$ where $i$ is not the root.
\end{definition}

The graded vector space of irreducible $C$-trees and hence $\BCact_C$ is given by
\begin{equation}\label{eq_irredCtrees}    
\BCact_C(n) \cong \bigoplus_{T\in\RT(n)}  
\Bigl(\bigotimes_{\ed{r(T)j}\in E(T)} C \Bigr) \otimes  
\Bigl(\bigotimes_{{\ed{ij}\in E(T),}\\{i\neq r(T)}} \overline{C}\Bigr),  
\end{equation}
where $r(T)$ is the root of $T$.
Using the splitting $C=\k\one\oplus\overline{C}$ we may rewrite this as a polynomial expression in $\overline{C}$.
There is a convenient way of indexing this polynomial; rather than using irreducible $C$-trees, where an outgoing edge $\ed{rj}$ from the root $r$ could be labelled by $\one$, we cut the edges $\ed{rj}$ labelled by $\one$ to leave a labelled forest, each component tree has a root, the corresponding $j$ and there is a chosen component tree, the tree containing $r$.
Let $\PFs$ be the set of planted forests with a chosen tree.  Then we may rewrite~\eqref{eq_irredCtrees} as
\begin{equation}\label{eq_polyBCactC}
\BCact_C(n)\cong \bigoplus_{F\in\PFs(n)} \overline{C}^{\otimes E(F)}.
\end{equation}
\begin{remark}
Although this is a polynomial functor in $\overline{C}$ with a similar diagram to~\eqref{eq_NAPpolynomial}, the operad maps are not maps of polynomials, indeed the diagonal map of $C$ is used.  
A similar polynomial description of $\BCY$ holds when $Y$ is a set, however this involves `splitting' the chosen point of $Y$ and so this only works for a pointed topological space when the point is disconnected.
\end{remark}

\begin{proposition}\label{prop:cacti-distr}
The linear subspace of $\NAP_C$ generated by relations of the form $T-T_{ij}=0$ is an operadic ideal and so $\BCact_C$ is an operad as a quotient of $\NAP_C$.
Furthermore the ideal is generated in arity 3 by
\begin{equation}
\smtree{3 \\ 2\ar[u]_\one\\ 1\ar[u]_c} \quad-\quad \sum\smtree{2 && 3 \\ &1\ar[ul]^{c_{(1)}} \ar[ur]_{c_{(2)}}& }\quad =\quad 0.
\end{equation}
\end{proposition}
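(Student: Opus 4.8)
The plan is to mirror the set-based argument of Theorem~\ref{thm:BCYoperad}, which already settled both assertions for $\NAPY$ and $\BCY$, and to supply the one genuinely new ingredient in the linear setting: compatibility of the reduction $T\mapsto T_{ij}$ with the Koszul signs coming from the symmetric monoidal structure of $\gVect$. Working inside the ambient operad $\NAP_C$ (whose composition and $\symn$-action carry the sign conventions fixed via the reference ordering of $E(T)$), I would prove two things. First, that the linear span of all elements $T-T_{ij}$ is closed under operadic composition on either side, so that it is an ideal and $\BCact_C$ inherits an operad structure. Second, that this span coincides with the ideal generated by the single displayed arity-$3$ element.

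For the ideal property the essential observation is that reduction commutes with composition. If $T$ has a reducible edge $\ed{ij}$ (labelled $\one$, with $i$ not the root and incoming edge $\ed{ki}$ labelled $c$), then for any $C$-tree $S$ the composite $T\circ_a S$, and likewise $S\circ_a T$, still carries a reducible edge $\ed{i'j'}$, the image of $\ed{ij}$, with the same incoming label $c$ and with $i'$ not the root. Reducing there and, on the other hand, composing $T_{ij}$ with $S$ are two ways of applying the coproduct $\Delta(c)=\sum c_{(1)}\otimes c_{(2)}$ and then rearranging the same tensor factors into the reference ordering of one and the same final tree; since $\one$ has degree zero and disappears harmlessly, coherence of the Koszul sign rule forces the two to agree, so $(T\circ_a S)_{i'j'}=T_{ij}\circ_a S$. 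Hence $(T-T_{ij})\circ_a S=(T\circ_a S)-(T\circ_a S)_{i'j'}$ is again a generator, similarly on the left, and a permutation sends a reducible edge to a reducible edge; so the span of the generators is an operadic ideal.

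For generation in arity~$3$ I would first note that the displayed element is itself a generator $P-P_{23}$, where $P=\smtree{3 \\ 2\ar[u]_\one\\ 1\ar[u]_c}$ and $P_{23}$ is its reduction at $\ed{23}$ (the sign is trivial, since no edge $\ed{xy}$ satisfies $2<y<3$). Given an arbitrary reducible edge $\ed{ij}$ with incoming $\ed{ki}$, a permutation reduces to $k=1$, $i=2$, $j=3$, so that the path $1\to2\to3$ with labels $c,\one$ is exactly $P$; cutting $\ed{12}$ and $\ed{23}$ splits $T$ into branches $T_1,T_2,T_3$ rooted at $1,2,3$, and as in Theorem~\ref{thm:BCYoperad} one writes $T$ as a composition of $P$ with $T_1,T_2,T_3$ followed by a relabelling permutation $\sigma$. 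Applying this same sequence of compositions and $\sigma$ to $P-P_{23}$ produces, by the commutation of reduction with composition established above, precisely $T-T_{ij}$; thus every generator lies in the ideal generated by the displayed element, and the two ideals coincide.

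The main obstacle is exactly the sign-compatible commutation invoked in both parts. Unlike the purely combinatorial situation of Theorem~\ref{thm:BCYoperad}, one must reconcile the abstract coherence statement with the concrete sign $(-1)^{|c_{(2)}|g}$, $g=\sum_{\ed{xy}\mid i<y<j}|a_{xy}|$, prescribed in the definition of $T_{ij}$: it has to be checked that $g$ counts exactly the intervening edges past which the second Sweedler factor $c_{(2)}$ is transported into its position on $\ed{kj}$ under the reference ordering fixed for $\NAP_D$, and that this persists after the vertex renumbering caused by composition. I expect the cleanest route is to observe that both ``reduce then compose'' and ``compose then reduce'' rearrange the same tensor into the reference ordering of the common target tree, so that Mac~Lane coherence makes them equal and reduces each assertion to the combinatorial identity already proved in Theorem~\ref{thm:BCYoperad}.
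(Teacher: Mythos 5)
Your proposal is correct and follows essentially the same route as the paper, whose proof of Proposition~\ref{prop:cacti-distr} is the one-line remark that it is the linear analogue of Theorem~\ref{thm:BCYoperad} and the same method applies. Your additional verification that the sign $(-1)^{|c_{(2)}|g}$ in the definition of $T_{ij}$ is exactly the Koszul sign for transporting $c_{(2)}$ into its slot in the reference ordering of $E(T)$ (so that reduction commutes with composition and with the $\symn$-action) is precisely the detail the paper leaves implicit, and it is carried out correctly.
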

\begin{proof}
This is the linear analogue of Theorem~\ref{thm:BCYoperad} and the same method applies.
\end{proof}

In the linear setting, the formula~\eqref{eq:NAP-via-family} (and its particular case~\eqref{eq:NAP-via-hom}), as well as~\eqref{eq:free-NAP} work without any changes (except for signs that one should carefully trace), while the formula~\eqref{eq:BCact-via-family} should be adapted into
\begin{equation}
f(c,\smtree{2\\1\ar[u]_\one}(a,b))=\sum f(c_{(1)},a)\circ f(c_{(2)},b). 
\end{equation}

\begin{theorem}\label{thm:homology-BCY}
Let $Y$ be a topological space then the homology operad of $\BCY$ is isomorphic to the linear operad $\BCact_{H_\ast(Y)}$.
\end{theorem}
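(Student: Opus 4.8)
The plan is to prove this by the same polynomial-functor-compatibility strategy already used for $\NAP_Y$ in Proposition~\ref{prop:cacti-distr}'s predecessor, now mediated through the two different combinatorial descriptions: the geometric side $\BCY(n)$ as a quotient of $\NAPY(n)$ and the linear side $\BCact_{H_\ast(Y)}(n)$ as a quotient of $\NAP_{H_\ast(Y)}(n)$. The essential point is that the homology functor $H_\ast(-;\k)$, which already intertwines $\NAPY$ with $\NAP_{H_\ast(Y)}$ by the Proposition of the previous subsection, must be shown to descend to the quotients compatibly with the reduction relations that cut down $\NAP$ to $\BCact$.

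First I would invoke the polynomial description~\eqref{eq_polyBCactC}, writing
\begin{equation}
\BCact_C(n)\cong \bigoplus_{F\in\PFs(n)} \overline{C}^{\otimes E(F)}
\end{equation}
for $C=H_\ast(Y)$, so that $\overline{C}=\overline{H_\ast(Y)}=H_{>0}(Y)\oplus\ldots$, the reduced homology in the augmentation-ideal sense. On the topological side I would analyse $\BCY(n)$ via the same planted-forest indexing: an irreducible $Y$-cactus is recorded by a planted forest $F\in\PFs(n)$ together with a labelling of each edge by a non-basepoint element of $Y$ on the non-root-outgoing edges and an arbitrary element on the root-outgoing edges, i.e. a point of $(Y\setminus\dt)^{E(F)}$ up to the appropriate identifications. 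The homology functor applied to this stratification should yield $\bigoplus_{F}H_\ast(Y)^{\otimes E(F)}$ with the basepoint-freeness translating precisely into the passage to $\overline{H_\ast(Y)}=\ker(\epsilon)$ on the relevant tensor factors, matching~\eqref{eq_polyBCactC} as graded vector spaces.

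The key structural step is then to check that this identification is an \emph{operad} isomorphism, not merely one of $\mathbb{S}$-modules. Here I would argue that the reduction procedure $T\sim T_{ij}$ on the topological side induces, after applying $H_\ast$, exactly the linear reduction $T-T_{ij}=0$ defined via the coproduct $\Delta(c)=\sum c_{(1)}\otimes c_{(2)}$ of $H_\ast(Y)$: the diagonal map of the coalgebra $H_\ast(Y)$ is precisely the image under $H_\ast$ of the topological diagonal used when duplicating an edge label during reduction, and the Koszul signs in the linear definition of $T_{ij}$ are exactly those forced by the symmetry $\sigma$ of $\gVect$ when the homology classes are permuted. Since both operads are quotients of operads already known to be isomorphic (via the previous Proposition), and since I will have verified that the defining relations correspond under $H_\ast$, the induced map on quotients is an operad isomorphism.

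\textbf{The main obstacle} I anticipate is the careful verification that the comultiplication of $H_\ast(Y)$ governs the reduction on the nose, including signs. Concretely, when a reducible edge labelled by the basepoint is contracted and the label $c$ on the incoming edge $\ed{ki}$ is redistributed to the two edges $\ed{ki}$ and $\ed{kj}$ as $\sum c_{(1)}\otimes c_{(2)}$, one must confirm that this is genuinely the image under the Künneth isomorphism of the topological reduction, rather than some other natural coproduct-like map. This hinges on the fact that, with field coefficients, $H_\ast$ sends the relevant pushout/colimit diagram computing the reduction (the diagonal $Y\to Y\times Y$ implicit in duplicating a strand, as in the coequaliser discussion for $\pi_1$) to the comultiplication, together with a Künneth-formula sign bookkeeping that reproduces the factor $(-1)^{|c_{(2)}|g}$. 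Once this single compatibility is established, the rest of the argument is formal, following the polynomial-functor machinery exactly as in the $\NAPY$ case.
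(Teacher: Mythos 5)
There is a genuine gap at the heart of your argument, and it sits exactly where you placed your ``stratification'' step. You propose to read off $H_\ast(\BCY(n))$ from the set-level description of irreducible cacti as a planted forest $F\in\PFs(n)$ with edge labels in $(Y\setminus\dt)^{E(F)}$, and to let ``the homology functor applied to this stratification'' produce $\bigoplus_F \overline{H}_\ast(Y)^{\otimes E(F)}$. But $\BCY(n)$ carries the quotient topology inherited from $\NAPY(n)$, not a disjoint-union topology indexed by forests: the locus of cacti with underlying forest $F$ looks like $(Y\setminus\dt)^{E(F)}$ only as a set, and the homology of a quotient space is not the direct sum of the homology of such strata. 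Already for $Y=S^1$ the stratum labels live in $Y\setminus\dt\simeq\mathbb{R}$, which is contractible, while $\overline{H}_\ast(S^1)$ is one-dimensional in degree~$1$; so the computation you sketch does not produce the correct graded vector space if taken literally. The same issue infects your closing paragraph, where you assert that with field coefficients $H_\ast$ sends ``the relevant pushout/colimit diagram computing the reduction'' to the comultiplication: homology with field coefficients preserves products and coproducts but \emph{not} general colimits, and $\BCY(n)$ is precisely a colimit (the coequaliser diagram with copies of $Y^{n-2}$ mapping into $\NAPY(n)$, as in the proof of Proposition~\ref{prop:BCYgroupoid}). This is exactly why $\BCY$ fails to be a polynomial functor in $Y$; the Remark following~\eqref{eq_polyBCactC} points out that the split-the-basepoint description works topologically only when the basepoint is a disconnected component.

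The missing idea --- which is the actual content of the paper's proof --- is to interpose the chain functor. The functor $C_\ast$ \emph{does} preserve colimits, so $C_\ast(\BCY(n))$ is the colimit of the diagram of chain complexes $C_\ast(\NAPY(n))$ and the copies of $C_\ast(Y^{n-2})$; although $C_\ast(Y)$ is not itself a coalgebra ($C_\ast$ does not preserve products), the diagonal maps $C_\ast(Y)\to C_\ast(Y\times Y)$ suffice to carry out the reductions at chain level. One then splits $C_\ast(Y^b)\cong\bigoplus_{A\subseteq\sbrac{b}}\overline{C_\ast(Y^{|A|})}$, where $\overline{C_\ast(Y^b)}$ is the kernel of the coordinate-forgetting maps $C_\ast(Y^b)\to\bigoplus_i C_\ast(Y^{b-1})$, computes the colimit in this split form to get $C_\ast(\BCY(n))\cong\bigoplus_{F\in\PFs(n)}\overline{C_\ast(Y^{|E(F)|})}$, and only \emph{then} applies $H_\ast$, using the K\"unneth theorem over a field to identify $H_\ast\bigl(\overline{C_\ast(Y^{b})}\bigr)\cong\overline{H}_\ast(Y)^{\otimes b}$. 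Your final observation --- that the operad structures agree because both sides are quotients of the already-identified $\NAP$ operads --- is correct and is exactly how the paper concludes; but without the chain-level detour your argument has no valid mechanism for commuting homology past the quotient, so the sign- and coproduct-bookkeeping you flag as ``the main obstacle'' is attached to a step that, as formulated, cannot be carried out.
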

\begin{proof}
The homology functor respects products and coproducts and hence polynomial functors, this is how we see that $H_\ast\brac{\NAP_Y} \cong \NAP_{H_\ast(Y)}$.
However the cactus operad $\BCY$ is not given by a polynomial functor.
\comment{  
We defined the cactus operads~--- $\BCY$ for a pointed space $Y$ and $\BCact_C$ for a graded augmented cocommutative coalgebra $C$~--- by adding certain relations $T\sim T_{ij}$ for trees $T$ with a reducible edge $\ed{ij}$.
The relations come in families: for a fixed tree $T$ with a fixed edge $\ed{ij}$ where $i$ is not the root, a $Y$-tree is reducible if $\ed{ij}$ is labelled by the point $\dt\in Y$ and the remaining $n-2$ edges are labelled by any element in $Y$, so there is a family of relations parametrised by $\cbrac{\dt}\times Y^{n-2}$.
Each element in this family encodes a reduction $T\sim T_{ij}$:  there is one map from $Y^{n-2}$ corresponding to $T$ and another map from $Y^{n-2}$ corresponding to $T_{ij}$.  
For the second map the diagonal map $y\mapsto (y,y)$ is used to define the new labelling.
So for each such tree $T$ with edge $\ed{ij}$ there are a pair of maps
\begin{equation}\xymatrix{
Y^{n-2}\ar@<0.5ex>[r]\ar@<-.5ex>[r] & \NAP_Y(n).}
\end{equation}
In identifying the two images of each point we are taking the coequaliser of this diagram.
But we have such an identification for each tree $T$ with an edge $\ed{ij}$ where $i$ is not the root.
So we have a diagram with a copy of $Y^{n-2}$ for each such pair and two arrows from each copy of $Y^{n-2}$ to a single copy of $\NAP_Y(n)$.
The colimit of this diagram is the space given by making all identifications $T\sim T_{ij}$ -- that is, the colimit is $\BCY(n)$.
}  
In the proof of~Proposition~\ref{prop:BCYgroupoid} we showed that $\BCY(n)$ was the colimit of a diagram containing a single copy of $\NAP_Y(n)$ and a copy of $Y^{n-2}$ for each pair $(T,\ed{ij})$ where $T\in\RT(n)$ and $\ed{ij}\in E(T)$ where $i$ is not the root.
For each copy of $Y^{n-2}$ there were two maps, one corresponding to $T$ and one to its reduction $T_{ij}$.
The act of taking the colimit makes identifications $T\sim T_{ij}$.

Precisely the same discussion applies to the linear operad $\BCact_C(n)$; realising it as the colimit of the same diagram but with $C^{\otimes n-2}$ replacing $Y^{n-2}$ and $\NAP_C(n)$ replacing $\NAP_Y(n)$.

Unfortunately the homology functor $H_\ast$ preserves coproducts and products, but not general colimits.  Therefore we can not just apply the homology functor to the colimit diagram for $\BCY(n)$.
The chain functor $C_\ast$ which takes values in the symmetric monoidal category of differentially graded vector spaces does however preserve colimits.
Therefore $C_\ast(\BCY(n))$ is the colimit of the diagram consisting of $C_\ast(\NAP_Y(n))$ and copies of $C_\ast(Y^{n-2})$.
However $C_\ast$ does not preserve products which is inconvenient because $C_\ast(Y)$ can not be assumed to be a coalgebra, although we still have the diagonal maps $C_\ast(Y)\rightarrow C_\ast(Y\times Y)$ which allow reductions to be made.
Since $Y$ is pointed there is a natural splitting $C_\ast(Y) \cong \k\one\oplus\overline{C}_\ast(Y)$ and furthermore the inclusions $Y^{a}\rightarrow Y^{b}$ induce splittings $C_\ast(Y^b) \cong C_\ast(Y^a)\oplus \overline{D}$.
The most general splitting is given by taking the kernel of the map 
\begin{equation}
C_\ast(Y^b) \rightarrow \bigoplus_{i=1\clc b} C_\ast(Y^{b-1}),
\end{equation}
where the $i$th map forgets the $i$th coordinate, call this kernel $\overline{C_\ast(Y^b)}$.  Then $C_\ast(Y^b)$ undergoes the splitting:
\begin{equation}
C_\ast(Y^b) \cong \bigoplus_{A\subseteq\sbrac{b}} \overline{C_\ast(Y^{|A|})}.
\end{equation}
This splitting allows one to compute the colimit of the diagram computing the space~$C_\ast(\BCY(n))$ in the same manner as the computation of $\BCact_C(n)$ in~\eqref{eq_polyBCactC}, using reductions $T\sim T_{ij}$ as before.  
Therefore 
\begin{equation}
C_\ast\brac{\BCact_Y(n)} \cong \bigoplus_{F\in\PFs(n)} \overline{C_\ast\brac{Y^{\otimes |E(F)|}}}.
\end{equation}
The homology functor does not necessarily preserve colimits but it does preserve products and hence
\begin{equation}
H_\ast\brac{ \overline{ C_\ast\brac{Y^{b}} } } \cong \overline{H}_\ast(Y)^{\otimes b}.
\end{equation}
Therefore
\begin{equation}
H_\ast\brac{\BCact_Y(n)} \cong \bigoplus_{F\in\PFs(n)} \overline{H}_\ast(Y)^{\otimes |E(F)|}.
\end{equation}
Which is isomorphic to $\BCact_{H_\ast(Y)}(n)$.
That this is an isomorphism of operads is immediate because both cacti operads are defined as quotients of NAP operads.
\end{proof}

\begin{remark}
Since when $C=H_*(Y)$, the operad $\BCact_C$ is the homology operad of a topological operad, it should not be surprising at all that for every coalgebra~$C$ the operad $\BCact_C$ is a Hopf operad~\cite{GJ,Moe99}, which essentially means that algebras over it form a tensor category. Its diagonal map coincides with the diagonal of the coalgebra~$C$ on the space of generators:
\begin{equation}
\Delta(\smtree{2\\1\ar[u]_c})=\sum \smtree{2\\1\ar[u]_{c_{(1)}}}\otimes \smtree{2\\1\ar[u]_{c_{(2)}}}. 
\end{equation}
\end{remark}

Let us conclude this section with an example of a ``smallest nontrivial algebra'' over a linear operad of based cacti.

\begin{example}\label{ex:perm-nap}
Let $Y$ be the (pointed) two-element set~$\{\mathbf{0},1\}$, so that $C=H_*(Y)$ is the split two-dimensional coalgebra $\k\oplus\k$, the product $\cdot_0$ defines a $\Perm$-algebra, and the product $\cdot_1$ defines an $\NAP$-algebra. In every one-dimensional $\BCact_C$-algebra, the $\Perm$-product is commutative, and the $\NAP$-product is associative, so they are very degenerate, and the first nontrivial example should be at least two-dimensional. 
One can easily check that a two-dimensional \emph{noncommutative} $\Perm$-algebra is necessarily isomorphic to the algebra $A=\cbrac{a,b}$ with multiplication table
\begin{gather}
a\cdot_0 a=a,\\
a\cdot_0 b=b\cdot_0 b=0,\\
b\cdot_0 a=b.
\end{gather}
Furthermore, to define a $\BCact_C$-algebra structure on~$A$, one should choose a $2\times2$-matrix $p$ with $p^2=p$, and put
\begin{gather}
a\cdot_1 a=p_{11}a+p_{12}b,\\
b\cdot_1 a=p_{21}a+p_{22}b,\\
a\cdot_1 b=b\cdot_1 b=0.
\end{gather}
One particular example will be obtained if we put $p=\begin{pmatrix}0&0\\0&1\end{pmatrix}$, so that the $\NAP$-product in this algebra is given by
\begin{gather}
a\cdot_1 a=a\cdot_1 b=b\cdot_1 b=0,\\
b\cdot_1 a=b.
\end{gather}
This product is ``nontrivial'' enough: it has a noncommutative $\Perm$-product, a nonassociative $\NAP$-product, and moreover it does not fit into the series of algebras defined in Example~\ref{ex:NAP-from-Perm} (since we have $a\cdot_1 a=0$ but $b\cdot_1 a=b\ne0$).
\end{example}

\section{Filtered distributive laws}\label{sec:distr-laws}

\subsection{Filtered distributive laws between quadratic operads}

Assume that $\calA=\calF(\calV)/(\calR)$ and $\calB=\calF(\calW)/(\calS)$ are two quadratic operads. For two subspaces $\calU_1$ and $\calU_2$ of the same operad $\calO$, let us denote by $\calU_1\bullet\calU_2$ the subspace of $\calO$ spanned by all elements $\phi\circ_i\psi$ with $\phi\in\calU_1$, $\psi\in\calU_2$. For two $\mathbb{S}$-module mappings
\begin{equation}
s\colon \calR_{(2)}\rightarrow\calW\bullet\calV\oplus\calV\bullet\calW\oplus\calW\bullet\calW 
\end{equation}
and
\begin{equation}
d\colon\calW\bullet\calV\rightarrow\calV\bullet\calW\oplus\calW\bullet\calW,
\end{equation}
one can define a quadratic operad $\calE$ with generators $\calU=\calV\oplus\calW$ and relations
$\calT=\calQ\oplus\calD\oplus\calS$, where 
\begin{equation}
\calQ=\{x-s(x)\mid x\in\calR_{(2)}\},\quad \calD=\{x-d(x)\mid x\in\calW\bullet\calV\}.
\end{equation}

Informally, we join generators of $\calA$ and $\calB$ together, keep the relations of~$\calB$, deform relations of $\calA$, adding to them ``lower terms'' of degree at most $1$ in generators of $\calA$, and impose a rewriting rule transforming $\calW\bullet\calV$ into a combination of terms from $\calV\bullet\calW$ and ``lower terms'' of degree~$0$ in generators of $\calA$. Note that using the rewriting rule~$x\mapsto d(x)$, one can replace~$s$ by 
\begin{equation}
s'\colon \calR_{(2)}\rightarrow\calV\bullet\calW\oplus\calW\bullet\calW,
\end{equation}
and from now on we shall denote by~$s$ that modified mapping.

Assume that the natural projection of $\mathbb{S}$-modules $\pi\colon\calE\twoheadrightarrow\calA$ splits (for example, it is always true in characteristic zero, or in arbitrary characteristic whenever the relations of $\calA$ remain undeformed, including the case of usual distributive laws). Then
the composite of natural mappings
\begin{equation}
\calF(\calV)\circ\calF(\calW)\hookrightarrow\calF(\calV\oplus\calW)\twoheadrightarrow\calF(\calV\oplus\calW)/(\calT)
\end{equation}
gives rise to a surjection of $\mathbb{S}$-modules
\begin{equation}
\xi\colon\calA\circ\calB\twoheadrightarrow\calE.
\end{equation}

\begin{definition}\label{def:filtered-distrib}
We say that the mappings $s$ and $d$ above define a filtered distributive law between the operads $\calA$ and $\calB$ if $\pi\colon\calE\twoheadrightarrow\calA$ splits, and the restriction of $\xi$ to weight~$3$ elements
\begin{equation}
\xi_3\colon(\calA\circ\calB)_{(3)}\to\calE_{(3)}
\end{equation}
is an isomorphism.
\end{definition}

The following result (generalising the distributive law criterion for operads that was first stated in~\cite{Markl94}) was proved in~\cite{Dotsenko2006} using the set operad filtration method of~\cite{Khoroshkin2006} and in \cite{Vallette_PROP} using a filtration on the Koszul complex; however, both proofs rely on the K\"unneth formula for symmetric collections and thus are not available in positive characteristic because in that case the group algebras $\k\mathbb{S}_n$ are not semisimple. 

\begin{theorem}\label{thm:filtered-distrib}
Assume that the operads $\calA$ and~$\calB$ are Koszul, and that the mappings $s$ and $d$ define a filtered distributive law between them. Then the operad $\calE$ is Koszul, and the $\mathbb{S}$-modules $\calA\circ\calB$ and $\calE$ are isomorphic.
\end{theorem}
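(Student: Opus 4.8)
The plan is to move the entire question to the category of \emph{shuffle} operads, where the single obstruction that blocks the classical proofs in positive characteristic evaporates. The forgetful functor $\calO\mapsto\calO^f$ is monoidal and one-to-one on objects, so it carries the surjection $\xi\colon\calA\circ\calB\twoheadrightarrow\calE$ to a surjection of the underlying shuffle collections without changing any dimensions $\dim\calE(n)$ or $\dim(\calA\circ\calB)(n)$. Since $\xi$ is already a surjection of $\mathbb{S}$-modules, to prove it is an isomorphism it is enough to check the equality of these dimensions, a statement blind to the symmetric group actions. Moreover, exactness of a complex of $\mathbb{S}$-modules is equivalent to exactness of its underlying complex of graded vector spaces, so the Koszul complex of a symmetric operad is acyclic if and only if that of its shuffle version is; thus Koszulness may be tested on $\calE^f$ \cite{DK}. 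The crucial gain is that shuffle collections carry no symmetric group action and are just sequences of graded $\k$-vector spaces, so the K\"unneth formula for the homology of a tensor product of complexes holds over $\k$ \emph{in every characteristic}, and the homological arguments of \cite{Dotsenko2006,Vallette_PROP} can be transported to this setting verbatim.

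First I would filter $\calE^f=\calF(\calU)^f/(\calT)$ by $\calV$-weight, the number of generators drawn from $\calV$ in a tree monomial; this is compatible with operadic composition. The relations interact well with this filtration: the top ($\calV$-weight $2$) part of $\calQ$ is exactly $\calR$, the relations $\calS$ sit in $\calV$-weight $0$, and the top ($\calV$-weight $1$) part of $\calD$ is $\calD_0=\{x-d_0(x)\mid x\in\calW\bullet\calV\}$, where $d_0$ is the $\calV\bullet\calW$-component of $d$ and the $\calW\bullet\calW$-component is of strictly lower weight. Hence the associated graded $\mathrm{gr}\,\calE$ is a quotient of the operad $\calE_0=\calF(\calU)/(\calR\oplus\calD_0\oplus\calS)$ governed by the \emph{strict} distributive law $d_0$. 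Comparing weight-$3$ components along $\calA\circ\calB\twoheadrightarrow\calE_0\twoheadrightarrow\mathrm{gr}\,\calE$, and using $\dim(\mathrm{gr}\,\calE)_{(3)}=\dim\calE_{(3)}=\dim(\calA\circ\calB)_{(3)}$ (the last equality being the hypothesis that $\xi_3$ is an isomorphism), forces $(\calE_0)_{(3)}\cong(\calA\circ\calB)_{(3)}$; that is, the strict law $d_0$ satisfies the weight-$3$ criterion.

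Next I would settle the strict case at the shuffle level. The weight-$3$ criterion for $d_0$ first yields, by the Markl-type normal form argument \cite{Markl94}, that $\calE_0\cong\calA\circ\calB$ as shuffle collections, so the two-step distributive-law filtration on $\calE_0^f$ is ``convergent''. Filtering the bar construction of $\calE_0^f$ accordingly produces a spectral sequence whose $E_1$-page is the tensor product of the Koszul complexes of $\calA^f$ and $\calB^f$. Since $\calA$ and $\calB$ are Koszul, each factor complex is acyclic off the diagonal, and because we are now tensoring complexes of plain vector spaces, the K\"unneth formula applies unconditionally and shows the $E_1$-page, hence the whole complex, is acyclic. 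This proves $\calE_0^f$, and therefore $\calE_0$, is Koszul.

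Finally I would lift these conclusions from $\calE_0$ to $\calE$ by an operadic analogue of the Braverman--Gaitsgory/Positselski flatness criterion: if the leading-term operad $\calE_0$ is Koszul and the deformation agrees with it in weight $3$ — which holds here, since $\dim\calE_{(3)}=\dim(\calA\circ\calB)_{(3)}=\dim(\calE_0)_{(3)}$ by the previous paragraph — then the relations $\calT$ form a ``Gr\"obner basis'' for the filtration, so $\mathrm{gr}\,\calE\cong\calE_0$ and $\calE^f$ is Koszul. Consequently $\calE$ is Koszul, and $\dim\calE(n)=\dim(\mathrm{gr}\,\calE)(n)=\dim(\calA\circ\calB)(n)$, which together with the surjectivity of $\xi$ gives the $\mathbb{S}$-module isomorphism $\calA\circ\calB\cong\calE$. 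I expect the main obstacle to be the strict step: one must arrange the filtration on the bar construction of $\calE_0^f$ so that its $E_1$-page genuinely splits as the tensor product of the factors' Koszul complexes, and check that the weight-$3$ criterion is precisely what kills the higher differentials; the second delicate point, where the weight-$3$ hypothesis is spent again, is upgrading the surjection $\calE_0\twoheadrightarrow\mathrm{gr}\,\calE$ to an isomorphism rather than leaving $\mathrm{gr}\,\calE$ a proper quotient.
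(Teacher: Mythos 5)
Your proposal is correct and takes essentially the same route as the paper: the paper's proof consists precisely of transporting the characteristic-zero arguments of \cite{Dotsenko2006,Vallette_PROP} to the category of shuffle operads, where the K\"unneth formula holds over any field and where Koszulness of a symmetric operad can be tested, and then using that the forgetful functor is monoidal and one-to-one on objects to force the surjection $\xi\colon\calA\circ\calB\twoheadrightarrow\calE$ to be an isomorphism of $\mathbb{S}$-modules. The only difference is one of exposition: you unfold the internal mechanics (the $\calV$-weight filtration, the associated graded strict law $d_0$, the spectral sequence on the bar construction, and the flatness-type lifting) that the paper delegates to the cited characteristic-zero proofs.
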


\begin{proof}
Let us first note that either of the characteristic zero proofs mentioned above (set operad filtration; filtration on the Koszul complex) works in the category of shuffle operads for arbitrary characteristic, since K\"unneth formula over a field is always available. Also, a symmetric operad $\calO$ is Koszul if and only if it is Koszul as a shuffle operad, which proves the first statement of the theorem. To prove the second statement, we observe that in the category of nonsymmetric collections we have an isomorphism $\calE^f\simeq\calA^f\circ_{sh}\calB^f\simeq(\calA\circ\calB)^f$, and in the symmetric category we have a surjection $\calA\circ\calB\twoheadrightarrow\calE$. Since the forgetful functor from the category of symmetric collections to the category of nonsymmetric collections is one-to-one on objects, that surjection has to be an isomorphism.
\end{proof}

\begin{example}
The following filtered distributive law was discussed by the first author in \cite{Dotsenko2006} as related to Gelfand--Varchenko algebras of locally constant functions on the complement to a hyperplane arrangement; unlike all other results of this paper, it is only available in characteristic zero. It is well known (and was probably first observed by Livernet and Loday) that the associative operad admits an alternative description as an operad generated by a symmetric binary operation $\cdot\star\cdot$ and a skew-symmetric binary operation $[\cdot,\cdot]$ that satisfy the relations
\begin{gather}
[a,[b,c]]+[b,[c,a]]+[c,[a,b]]=0,\\
[a\star b,c]=a\star[b,c]+[a,c]\star b,\\
(a\star b)\star c-a\star(b\star c)=[b,[a,c]]. 
\end{gather}
If we put $\calV=\SPAN(\cdot\star\cdot)$, $\calW=\SPAN([\cdot,\cdot])$, and consider the operads $\calA=\Com$ and $\calB=\Lie$, 
\begin{gather}
s((a\star b)\star c-a\star(b\star c))=[b,[a,c]],\\ 
d([a\star b,c])=a\star[b,c]+[a,c]\star b
\end{gather}
then there are no additional relations in weight~$3$, and in characteristic zero the projection $\As\twoheadrightarrow\Com$ splits, therefore the associative operad is built from $\Com$ and $\Lie$ via a filtered distributive law. Thus we obtain a yet another proof of the Koszulness of the associative operad, and also recover that as an $\mathbb{S}$-module it is isomorphic to $\Com\circ\Lie$.
\end{example}

\subsection{Filtered distributive laws and Koszul duality}

An easy linear algebra exercise shows that if $\calE$ is obtained from $\calA$ and $\calB$ via the mappings $s$ and $d$ as above, then the Koszul dual operad~$\calE^!$ is similarly obtained from $\calB^!$ and~$\calA^!$. The following result shows that the notion of a filtered distributive law agrees very well with the Koszul duality theory for operads (which our previous example --- being Koszul self-dual --- did not quite manifest). 

\begin{theorem}\label{thm:koszul-dual}
Assume that the operad $\calE$ is obtained from the binary quadratic operads $\calA$ and $\calB$ via a filtered distributive law. Then its Koszul dual~$\calE^!$ is obtained from $\calB^!$ and $\calA^!$ by a filtered distributive law as well whenever the projection $\calE^!\twoheadrightarrow\calB^!$ splits. 
\end{theorem}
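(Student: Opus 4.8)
The plan is to take Definition~\ref{def:filtered-distrib} at face value for the dual data and reduce everything to a weight-$3$ dimension count. By the linear-algebra observation preceding the theorem, the orthogonal complement $\calT^\perp$ of $\calT=\calQ\oplus\calD\oplus\calS$ splits, relative to the block decomposition of $\calF(\calV^\vee\oplus\calW^\vee)_{(2)}$ induced by the (shape-diagonal, hence $\calV/\calW$-block-diagonal) Koszul pairing, as the graph of two maps $s^!$ and $d^!$ of exactly the shape needed to build an operad from $\calA'=\calB^!$ and $\calB'=\calA^!$: the relations of $\calB'=\calA^!$ (namely $\calR^\perp$) are preserved, those of $\calA'=\calB^!$ (namely $\calS^\perp$) are deformed by $s^!$, and $d^!$ rewrites $\calV^\vee\bullet\calW^\vee$ into $\calW^\vee\bullet\calV^\vee\oplus\calV^\vee\bullet\calV^\vee$. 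Thus $\calE^!$ is the operad attached to the data $(\calB^!,\calA^!,s^!,d^!)$, and to prove the theorem I must only verify the two clauses of Definition~\ref{def:filtered-distrib}: that $\calE^!\twoheadrightarrow\calB^!$ splits, and that the comparison map $\xi_3^!\colon(\calB^!\circ\calA^!)_{(3)}\to\calE^!_{(3)}$ is an isomorphism. The first clause is precisely the standing hypothesis, so the entire content is the second.

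First I would note that the splitting of $\calE^!\twoheadrightarrow\calB^!$ is exactly what produces, as in the paragraph preceding Definition~\ref{def:filtered-distrib}, the surjection of $\mathbb{S}$-modules $\xi^!\colon\calB^!\circ\calA^!\twoheadrightarrow\calE^!$. Hence $\xi_3^!$ is onto, and it is an isomorphism if and only if the two $\mathbb{S}$-modules agree in dimension, arity by arity, in weight~$3$. Writing $D=\ker\xi_3$ and $D^!=\ker\xi_3^!$, the hypothesis that $(s,d)$ is a filtered distributive law says $D=0$, and the goal becomes $D^!=0$; I would obtain this from the arity-wise identity $\dim D=\dim D^!$.

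The identity $\dim D=\dim D^!$ I would reduce to a weight-$3$ ``Euler characteristic'' that is insensitive to syzygies. Decompose $\calF(\calV\oplus\calW)_{(3)}$ by tree shape; in each arity-$4$ shape the two internal edges carry copies of $\calT$, and the shape-diagonal pairing identifies the relation placements for $\calE^!$ with the orthogonal complements of those for $\calE$. A direct count then shows that $\dim\calE^!_{(3)}$ equals the total dimension of the weight-$3$ syzygies $(\calT\bullet\calU)\cap(\calU\bullet\calT)$, while $\dim\calE_{(3)}$ equals the free value $\dim\calF(\calU)_{(3)}$ minus $\dim(\calT\bullet\calU+\calU\bullet\calT)$; by inclusion--exclusion the difference $\dim\calE_{(3)}-\dim\calE^!_{(3)}$ depends only on $\dim(\calV\oplus\calW)$ and $\dim\calT$, the syzygy contribution cancelling. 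The same syzygy-free relation, applied to $\calA$ and to $\calB$ and combined through the weight-$2$ Koszul duality identities $\dim\calA_{(2)}+\dim\calA^!_{(2)}=\dim\calF(\calV)_{(2)}$ and its analogue for $\calB$, computes $\dim(\calA\circ\calB)_{(3)}-\dim(\calB^!\circ\calA^!)_{(3)}$ and matches it with $\dim\calE_{(3)}-\dim\calE^!_{(3)}$. Rearranging gives $\dim D=\dim D^!$, whence $D=0$ forces $D^!=0$ and $\xi_3^!$ is an isomorphism.

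The main obstacle is precisely this weight-$3$ bookkeeping: verifying that the syzygy terms cancel in $\dim\calE_{(3)}-\dim\calE^!_{(3)}$ and that the composite $\mathbb{S}$-modules $\calA\circ\calB$ and $\calB^!\circ\calA^!$ contribute the identical syzygy-free expression. A naive unconditional dimension identity would be too strong, since it would force \emph{every} $(s,d)$ to be a filtered distributive law; the computation must therefore be organised so that it yields only the \emph{balanced} statement $\dim D=\dim D^!$ rather than the vanishing of either defect, and it is exactly the cancellation of the syzygy terms that delivers this balanced form. I would carry out the count in the category of shuffle operads, where the K\"unneth formula and the explicit tree-monomial bases make all the relevant dimensions available and characteristic-independent, mirroring the strategy used in the proof of Theorem~\ref{thm:filtered-distrib}. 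Conceptually, $\dim D=\dim D^!$ reflects a perfect pairing between the two defects induced by the Koszul pairing on $\calF(\calV\oplus\calW)_{(3)}$, and making that pairing explicit would give a cleaner, dimension-free argument.
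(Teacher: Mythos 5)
There is a genuine gap, and it sits exactly where you located the ``main obstacle'': the weight-$3$ bookkeeping. Your dimension count imports two facts from quadratic \emph{algebras} that fail for operads. First, you assert that $\dim\calE^!_{(3)}$ equals $\dim\bigl((\calT\bullet\calU)\cap(\calU\bullet\calT)\bigr)$ and that, by inclusion--exclusion, $\dim\calE_{(3)}-\dim\calE^!_{(3)}$ depends only on $\dim(\calV\oplus\calW)$ and $\dim\calT$. Both statements break on arity-$4$ tree shapes in which the two non-root vertices sit on \emph{different} inputs of the root. Such a ``parallel'' monomial admits two distinct decompositions $\phi\circ_i\psi$, so the placement maps $\calT\otimes\calU\to\calF(\calU)_{(3)}$ may acquire a kernel that depends on the subspace $\calT$ itself (not just on its dimension), while $\calU\bullet\calT$ contains no parallel shapes at all; by contrast, the weight-$3$ part of the Koszul dual is cut out by \emph{preimage} (cocomposition) conditions, which a parallel monomial can satisfy. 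A concrete counterexample, already in the nonsymmetric setting with two binary generators $\alpha,\beta$ and $\calT=\langle\alpha\circ_1\beta,\,\alpha\circ_2\alpha\rangle$: the monomial
\begin{equation*}
m=(\alpha\circ_1\beta)\circ_3\alpha=(\alpha\circ_2\alpha)\circ_1\beta
\end{equation*}
has both of its two-vertex subtrees in $\calT$, hence contributes to $\calE^!_{(3)}$ (which has dimension $3$ here), yet $m\notin\calU\bullet\calT$ (elements of $\calU\bullet\calT$ carry both non-root vertices on a single input of the root), so the image-intersection has dimension only $2$. Moreover $\dim(\calT\bullet\calU)=11$ for this $\calT$, versus $12$ for $\calT'=\langle\alpha\circ_1\alpha,\,\beta\circ_1\beta\rangle$ of the same dimension, so even the individual terms in your inclusion--exclusion are not dimension-determined. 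In the algebra case $T(V)_3=V^{\otimes 3}$ there are no parallel shapes, which is exactly what makes the analogous count work there and is, I suspect, what misled you.

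This does not necessarily doom the numerical \emph{strategy} --- in small experiments the corrections coming from parallel shapes enter both $\dim\calE_{(3)}$ and $\dim\calE^!_{(3)}$ in a correlated way and the Euler difference can still come out balanced --- but establishing $\dim D=\dim D^!$ would require a genuinely new lemma comparing the unsummed placement images defining the ideal with the summed cocomposition conditions defining the dual cooperad, and nothing in your proposal supplies it. The paper sidesteps dimension counting entirely: it introduces an ordering of tree monomials that first counts occurrences of $\calV$-generators, observes that the distributive-law hypothesis forbids any ``mixed'' weight-$3$ leading monomials in the reduced Gr\"obner basis of $\calE^f$ (their presence would force $\calE_{(3)}$ below its upper bound $(\calA\circ\calB)_{(3)}$), and then invokes the free-resolution machinery of \cite{DKRes} to identify the bar homology of $\calE^f$ up to weight $3$ with the shuffle composite of the corresponding data for $\calB^f$ and $\calA^f$; since the Koszul dual is the diagonal bar homology, the weight-$3$ bijectivity follows in the shuffle category, and the splitting hypothesis transports it to the symmetric category. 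Your opening and closing steps --- extracting $(s^!,d^!)$ from $\calT^\perp$, and deducing surjectivity of $\xi^!_3$ from the splitting of $\calE^!\twoheadrightarrow\calB^!$ --- do agree with the paper; the middle of your argument is where it fails.
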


\begin{proof}
If both operads~$\calA$ and~$\calB$ are Koszul, then $\calE$ is Koszul, and this gives us enough information to complete the proof, see~\cite{Dotsenko2006} for details. Let us give a proof in the case of arbitrary~$\calA$ and~$\calB$ to show a yet another application of methods developed in~\cite{DKRes}.

Let us define an ordering on tree monomials in the free shuffle operad generated by~$\calV^f\oplus\calW^f$ in the following way. For two tree monomials, we first compute the number of generators from $\calV^f$ used in each of them; if for one of them that number is greater than for the other, we say that monomial is greater than the other. Otherwise, we compare tree monomials using the lexicographic ordering on paths~\cite{DK,DVJ}. This way we can be sure that the leading monomials of~$\calR^f$, tree monomials spanning $\calW^f\bullet\calV^f$, and the leading monomials of~$\calS^f$ are the leading monomials of the defining relations of~$\calE$. 

Since the $\mathbb{S}$-module $\calE$ is a quotient of $\calA\circ\calB$, so the distributive law condition ensures that the set of weight~$3$ leading monomials of the reduced Gr\"obner basis of~$\calE^f$ is the union of the set of weight~$3$ leading monomials of the reduced Gr\"obner basis of~$\calA^f$ and the set of weight~$3$ leading monomials of the reduced Gr\"obner basis of~$\calB^f$: the presence of ``mixed'' leading monomials would make $\calE_{(3)}$ smaller than its natural upper bound $(\calA\circ\calB)_{(3)}$. In other words, all S-polynomials \cite{DK} of weight~$3$ of $\calE^f$ are either S-polynomials of~$\calA^f$ or S-polynomials of~$\calB^f$.

The above description of leading monomials of the reduced Gr\"obner basis means that we have the full information on the part of the free resolution of~$\calE^f$ consisting of elements of weight at most~$3$, and a simple description of the homology classes of the bar complex of~$\calE^f$ up to weight~$3$. From \cite{DKRes}, we know that generators of a free resolution of~$\calE^f$ can be constructed in terms of ``overlaps'' of leading monomials of the reduced Gr\"obner basis of~$\calE^f$. Such generators of weight~$2$ are precisely the leading monomials of the defining relations, whereas the generators of weight~$3$ are either overlaps of pairs of leading monomials of defining relations or leading monomials of weight~$3$ elements of the reduced Gr\"obner basis. The differential induced on the space of the generators of that free resolution can be computed as follows. If an overlap of two leading monomials of defining relations produces, according to Buchberger's algorithm~\cite{DK}, a nontrivial S-polynomial, the differential maps the generator corresponding to that overlap to the generator corresponding to the leading term of the respective S-polynomial. Otherwise, the differential maps the corresponding generator to zero. Together with the information on S-polynomials of~$\calE^f$ that we have, this means that up to weight~$3$ the homology of the bar complex of~$\calE^f$ is isomorphic to the shuffle composition of the corresponding homology for $\calB^f$ and $\calA^f$. Since the Koszul dual operads are dual to the diagonal parts of the bar homology, our statement follows in the shuffle category. In the symmetric category, we observe that because of the splitting of $\calE^!\twoheadrightarrow\calB^!$, there is a surjection
$\calB^!\circ\calA^!\twoheadrightarrow\calE^!$, and its bijectivity in weight~$3$ in the shuffle category implies bijectivity in the symmetric category as well.
\end{proof}

\subsection{Operadic K\"unneth formula}

We conclude this section with a general observation which appears to be useful for transferring statements of the characteristic zero operad theory in positive characteristic. If one examines the proof of Theorem~\ref{thm:filtered-distrib} carefully, it becomes obvious that it works because of the following statement, a particular case of the operadic K\"unneth formula~\cite{LodayVallette2011}, which is valid over any ground field~$\k$.

\begin{theorem}
Let $\calM$ and $\calN$ be two reduced differential graded $\mathbb{S}$-modules. Then 
\begin{equation}\label{kunneth}
H_*(\calM\circ\calN)\simeq H_*(\calM)\circ H_*(\calN).
\end{equation}
\end{theorem}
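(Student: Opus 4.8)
The plan is to bypass the $\mathbb{S}_k$-coinvariants that appear in the naive definition of the composition product by working with the description of $\circ$ in terms of finite sets and set partitions, in which it becomes a plain direct sum of tensor products of chain complexes over~$\k$. Once in that form, the ordinary K\"unneth theorem over a field applies summand by summand, with no correction terms in any characteristic, because a tensor product of complexes of $\k$-vector spaces carries no $\operatorname{Tor}$ obstructions.

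First I would recall that for reduced $\mathbb{S}$-modules, evaluating $\calM\circ\calN$ on a finite set~$S$ yields
\begin{equation}
(\calM\circ\calN)(S)\cong\bigoplus_{P\vdash S}\calM[P]\otimes\bigotimes_{B\in P}\calN(B),
\end{equation}
where $P$ ranges over the (finitely many, by reducedness) set partitions of~$S$ into nonempty blocks, $\calM[P]$ denotes $\calM$ evaluated on the finite set of blocks of~$P$, and the differential is the total tensor-product differential. The point that makes this work over an arbitrary field is that no coinvariants survive in this formula. Indeed, starting from the usual expression $\bigoplus_{k\ge1}\calM(k)\otimes_{\mathbb{S}_k}\bigl(\bigoplus_{S=B_1\sqcup\cdots\sqcup B_k}\calN(B_1)\otimes\cdots\otimes\calN(B_k)\bigr)$, in which the inner sum runs over \emph{ordered} partitions of~$S$ into nonempty blocks, one groups terms according to the underlying unordered partition~$P$; the $\mathbb{S}_k$-action permuting the $k$ blocks of a fixed set partition is free, since those blocks are pairwise distinct subsets of~$S$, so that isotypic piece is induced from the trivial subgroup. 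Hence $\calM(k)\otimes_{\mathbb{S}_k}(-)$ restricted to it is just $\calM(k)\otimes_{\k}(-)$ evaluated at one chosen ordering, which is exactly the summand displayed above.

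With the composition product rewritten as an honest direct sum of finite tensor products of complexes of $\k$-vector spaces, the computation of homology is routine. Homology commutes with direct sums, and for each $P$ the classical K\"unneth isomorphism over the field~$\k$ gives
\begin{equation}
H_*\!\left(\calM[P]\otimes\bigotimes_{B\in P}\calN(B)\right)\cong H_*(\calM)[P]\otimes\bigotimes_{B\in P}H_*(\calN)(B).
\end{equation}
Summing over all~$P$ and reading the right-hand side through the same partition decomposition identifies it with $\bigl(H_*(\calM)\circ H_*(\calN)\bigr)(S)$, which is~\eqref{kunneth}. One checks directly that these isomorphisms are natural in bijections of~$S$, hence assemble into an isomorphism of $\mathbb{S}$-modules.

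The only genuine subtlety, and the step I would be most careful about, is the freeness claim used to eliminate the coinvariants: it is precisely the mechanism that replaces a possibly non-exact coinvariants functor by an exact one in positive characteristic. Everything else reduces to the field-coefficient K\"unneth theorem and the fact that homology commutes with arbitrary direct sums; the Koszul signs introduced when reordering tensor factors are bookkeeping and do not affect the argument.
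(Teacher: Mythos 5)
Your proof is correct, and it takes a genuinely different route from the paper's. The paper argues via the forgetful functor to the shuffle category: since $(-)^f$ is monoidal on reduced collections and one-to-one on objects, it suffices to show the natural map is an isomorphism after forgetting, and there the shuffle composition $\calM^f\circ_{sh}\calN^f$ is manifestly polynomial in the components (a direct sum of tensor products over $\k$, indexed by shuffle-tree data), so the field-coefficient K\"unneth theorem applies summand by summand. You instead stay entirely inside the symmetric category and kill the coinvariants directly, via the species-theoretic description of $\circ$: for reduced collections the $\mathbb{S}_k$-action permuting the blocks of an ordered set partition into nonempty parts is free (the blocks are pairwise distinct subsets), so $\calM(k)\otimes_{\mathbb{S}_k}(-)$ on each isotypic piece is a plain tensor product over $\k$, and $(\calM\circ\calN)(S)$ becomes an honest direct sum over unordered partitions of tensor products of complexes of $\k$-vector spaces. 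The two arguments exploit the same mechanism --- genuine $\mathbb{S}_k$-coinvariants are the only obstruction in positive characteristic, and both proofs arrange for them never to appear --- and indeed the monoidality of the forgetful functor on reduced collections is usually proved by choosing canonical orderings of blocks (say by minimal elements), which is your freeness observation in disguise. What each buys: the paper's version is very short given the shuffle-operad formalism it has already developed, and it dovetails with the rest of the section, where Koszulness questions are transported to the shuffle side anyway; your version is more self-contained and elementary, needs no shuffle machinery, and keeps the natural comparison map visible within the symmetric category, at the modest cost of the equivariance and Koszul-sign bookkeeping for unordered tensor products that the forgetful functor handles silently. You also correctly isolate where reducedness is essential (nonempty blocks, hence distinct, hence a free action), matching the paper's remark that reducedness is exactly what makes $(-)^f$ monoidal.
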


\begin{proof}
Let us note that there is a natural map
\begin{equation}
\kappa\colon H_*(\calM)\circ H_*(\calN)\to H_*(\calM\circ\calN).
\end{equation}
Our strategy is to apply the forgetful functor, and prove that 
\begin{equation}
\kappa^f\colon (H_*(\calM)\circ H_*(\calN))^f\to (H_*(\calM\circ\calN))^f
\end{equation}
is an isomorphism in the shuffle category. Since the forgetful functor is one-to-one on objects, this would mean that $\kappa$ is an isomorphism.
In the shuffle category, since the forgetful functor is monoidal (that is the only part of the proof where it is crucial that our collections are reduced), we have
\begin{equation}
(H_*(\calM)\circ H_*(\calN))^f\simeq (H_*(\calM))^f\circ_{sh} (H_*(\calN))^f\simeq H_*(\calM^f)\circ_{sh} H_*(\calN^f),
\end{equation}
and
\begin{equation}
H_*(\calM\circ\calN)^f\simeq H_*((\calM\circ\calN)^f)\simeq H_*(\calM^f\circ_{sh}\calN^f).
\end{equation}
Finally, since the shuffle composition is polynomial in the components of $\calM^f$ and $\calN^f$, 
we have
\begin{equation}
H_*(\calM^f\circ_{sh}\calN^f)\simeq H_*(\calM^f)\circ_{sh} H_*(\calN^f),
\end{equation}
and the theorem follows.
\end{proof}

\section{Koszulness of cacti and other operads}\label{sec:Koszul}

In this section, we prove that the operads $\NAP_D$ and $\BCact_C$ are Koszul, and also show how one can use filtered distributive laws to recover known results, and obtain new results on the structure of various known operads.

\subsection{The operad $\PostLie$}

The operad $\PostLie$ was defined and studied in \cite{ChapotonVallette2006,Vallette2007}, and recently appeared in various contexts, see \cite{PostLie0,PostLie1,PostLie2,PostLie3}. It is generated by a skew-symmetric operation $[\cdot,\cdot]$ and an operation $\cdot\circ\cdot$ without any symmetries that satisfy the relations
\begin{gather}
[a,[b,c]]+[b,[c,a]]+[c,[a,b]]=0,\\
(a\circ b)\circ c-a\circ(b\circ c)-(a\circ c)\circ b+a\circ(c\circ b)=a\circ[b,c],\\
[a,b]\circ c=[a\circ c,b]+[a,b\circ c]. 
\end{gather}

The Koszul dual $\PostLie^!=\ComTrias$ by commutative trialgebras is generated by a symmetric operation $\cdot\bullet\cdot$ and an operation $\cdot\star\cdot$ without any symmetries that satisfy the relations
\begin{gather}
(a\star b)\star c=a\star (b\star c)=a\star (c\star b),\\
(a\bullet b)\bullet c=a\bullet(b\bullet c),\\
a\star(b\star c)=a\star(b\bullet c),\\
a\bullet(b\star c)=(a\bullet b)\star c.
\end{gather}

\begin{theorem}
The operad $\PostLie$ is Koszul, and as an $\mathbb{S}$-module is isomorphic to $\Lie\circ\Mag$. 
\end{theorem}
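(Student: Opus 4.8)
The plan is to realise $\PostLie$ as an operad built from $\calA=\Lie$ and $\calB=\Mag$ by a filtered distributive law and then invoke Theorem~\ref{thm:filtered-distrib}. I would set $\calV=\SPAN([\cdot,\cdot])$ with relation $\calR$ the Jacobi identity, so that $\calA=\Lie$, and $\calW=\SPAN(\cdot\circ\cdot)$ with $\calS=0$, so that $\calB=\Mag$ is the free operad on one binary generator. Reading off the three defining relations of $\PostLie$, the first is precisely the undeformed Jacobi relation; hence $s=0$ and $\calQ=\calR$. The remaining two relations both have their leading part in $\calW\bullet\calV$ and should be read as the rewriting rule $d\colon\calW\bullet\calV\to\calV\bullet\calW\oplus\calW\bullet\calW$: the last relation sends $[a,b]\circ c$ to $[a\circ c,b]+[a,b\circ c]\in\calV\bullet\calW$, while the middle relation sends $a\circ[b,c]$ to $(a\circ b)\circ c-a\circ(b\circ c)-(a\circ c)\circ b+a\circ(c\circ b)\in\calW\bullet\calW$. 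Together with the $\mathbb{S}_3$-action these cover all spanning monomials of $\calW\bullet\calV$, so $d$ is defined on the whole of $\calW\bullet\calV$, and the presence of the $\calW\bullet\calW$-valued component is exactly what makes the law \emph{filtered} rather than honest.

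First I would record that both building blocks are Koszul: $\Lie$ is classically Koszul and $\Mag$, being free, is trivially Koszul. Next, since the relations of $\calA=\Lie$ are undeformed we have $s=0$, so the projection $\pi\colon\calE\twoheadrightarrow\Lie$ splits in arbitrary characteristic, as remarked just before Definition~\ref{def:filtered-distrib}. By Theorem~\ref{thm:filtered-distrib} it therefore remains only to verify the filtered distributive law condition, namely that $\xi_3\colon(\Lie\circ\Mag)_{(3)}\to\calE_{(3)}$ is an isomorphism; as $\xi$ is always surjective, this amounts to confluence of the above rewriting system in weight~$3$.

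To check this I would pass to the shuffle operad and use the ordering of the proof of Theorem~\ref{thm:koszul-dual}, comparing tree monomials first by the number of $\calV$-generators (brackets) and breaking ties by the path-lexicographic order. With this ordering the leading monomials of the defining relations are the Jacobi leading monomial together with every spanning monomial of $\calW\bullet\calV$, i.e.\ every $\circ$ sitting directly above a bracket. The weight-$3$ S-polynomials arise from the overlaps of these leading monomials, of which there are three families: (a) a Jacobi leading monomial over another Jacobi leading monomial; (b) a $\calW\bullet\calV$ monomial whose bracket input is itself a Jacobi leading monomial, e.g.\ $\circ(x,[[u,v],w])$; and (c) a single $\circ$ carrying a bracket in each input, $\circ([u,v],[w,z])$. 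Overlap (a) reduces to zero because $\Lie$ already has a quadratic Gr\"obner basis. For (b) and (c) one rewrites by the two competing rules, re-reduces every newly created $\circ$-over-bracket monomial, and compares the resulting brackets-over-magma normal forms.

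The main obstacle is overlap (c): unlike (a), which is inherited from $\Lie$, each application of the last or the middle relation to $\circ([u,v],[w,z])$ creates fresh $\calW\bullet\calV$ monomials that must again be rewritten, so both reduction orders unfold into long alternating sequences, and their agreement only emerges after repeatedly invoking the Jacobi identity to reconcile the two sides. Once these weight-$3$ S-polynomials are confirmed to reduce to zero, $\xi_3$ is an isomorphism, the pair $(s,d)$ is a filtered distributive law, and Theorem~\ref{thm:filtered-distrib} yields that $\PostLie$ is Koszul with $\PostLie\simeq\Lie\circ\Mag$ as $\mathbb{S}$-modules.
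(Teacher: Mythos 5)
Your proposal is correct and follows essentially the same route as the paper: the paper also realises $\PostLie$ via a filtered distributive law between $\calA=\Lie$ and $\calB=\Mag$ with exactly your choice of $\calV$, $\calW$, $s=0$ (so the splitting of $\pi$ is automatic) and the same rewriting map $d$, then invokes Theorem~\ref{thm:filtered-distrib}. The only difference is one of presentation: where the paper states that ``the weight~$3$ condition can be easily checked by hand,'' you organise that check as a confluence verification of the weight-$3$ S-polynomials arising from the three overlap families (a)--(c), which is a faithful and correctly enumerated elaboration of the same verification.
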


\begin{proof}
By an immediate computation, we see that the operad $\PostLie$ is built from the operads $\calA=\Lie$ and $\calB=\Mag$ via a filtered distributive law. Indeed, we may put $\calV=\SPAN([\cdot,\cdot])$, $\calW=\SPAN(\cdot\circ\cdot)$, and
\begin{gather}
s([a,[b,c]]+[b,[c,a]]+[c,[a,b]])=0,\\
d([a,b]\circ c)=[a\circ c,b]+[a,b\circ c],\\
d(a\circ[b,c])=(a\circ b)\circ c-a\circ(b\circ c)-(a\circ c)\circ b+a\circ(c\circ b)
\end{gather}
(the weight~$3$ condition can be easily checked by hand, and since $s=0$, the projection is split automatically). This proves both statements of our theorem. 
\end{proof}

The Koszulness of $\PostLie$ and $\PostLie^!=\ComTrias$ was established in \cite{ChapotonVallette2006} using partition posets. Note that our approach applies to $\ComTrias$ as well, since the splitting of the projection $\ComTrias\twoheadrightarrow\Mag^!=\mathop{\mathrm{Nil}}$ only requires the splitting on the level of generators, which we already have. The $\mathbb{S}$-module isomorphism $\PostLie\simeq\Lie\circ\Mag$ was first observed in \cite{Vallette2007}\footnote{The proof in the published version of that paper is incomplete (one has to check that the extension of $\cdot\circ\cdot$ to the free algebra $\Lie(\Mag(V))$ is consistent with the Jacobi identity).}. This isomorphism, together with the following corollary, allows to complete the PostLie algebras description in~\cite{LodayEnc}.

\begin{corollary}
The suboperad of $\PostLie$ generated by $\cdot\circ\cdot$ is isomorphic to $\Mag$. 
\end{corollary}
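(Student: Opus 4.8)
The plan is to produce the canonical surjection from $\Mag$ onto the suboperad in question and then leverage the $\mathbb{S}$-module isomorphism $\PostLie\simeq\Lie\circ\Mag$ of the preceding theorem to establish injectivity. First I would recall that $\Mag=\calF(\calW)$ is the free operad on a single binary operation with no symmetry, where $\calW=\SPAN(\cdot\circ\cdot)$. Its universal property supplies a unique operad morphism $\phi\colon\Mag\to\PostLie$ sending the magmatic generator to the operation $\cdot\circ\cdot$ of $\PostLie$; by construction the image of $\phi$ is precisely the suboperad $\abrac{\cdot\circ\cdot}$ generated by $\cdot\circ\cdot$, so $\phi$ is a surjection onto that suboperad. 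It then remains only to prove that $\phi$ is injective.

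For injectivity I would return to the filtered distributive law of the previous theorem, taken with $\calA=\Lie$ and $\calB=\Mag$, which yields the $\mathbb{S}$-module isomorphism $\xi\colon\Lie\circ\Mag\xrightarrow{\ \sim\ }\PostLie$ induced by the composite $\calF(\calV)\circ\calF(\calW)\hookrightarrow\calF(\calV\oplus\calW)\twoheadrightarrow\PostLie$. The key point is the behaviour of $\xi$ on the direct summand $\Lie(1)\circ\Mag\cong\Mag$ of $\Lie\circ\Mag$ (the part where the outer $\Lie$-operation has arity one, i.e. is the unit): on a magmatic tree monomial $t$ sitting in this summand, $\xi$ returns the corresponding $\cdot\circ\cdot$-composition inside $\PostLie$, which is exactly $\phi(t)$. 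In other words, writing $\iota\colon\Mag\hookrightarrow\Lie\circ\Mag$ for the inclusion of this summand, one has $\phi=\xi\circ\iota$.

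With this identity in hand the conclusion is immediate: $\iota$ is injective as the inclusion of a direct summand, and $\xi$ is injective since it is an isomorphism, so the composite $\phi=\xi\circ\iota$ is injective. Being simultaneously injective and surjective onto $\abrac{\cdot\circ\cdot}$, the morphism $\phi$ is the desired isomorphism $\Mag\cong\abrac{\cdot\circ\cdot}$.

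The step I expect to require the most care is the identification $\phi=\xi\circ\iota$, that is, the assertion that $\xi$ restricted to the $\Lie$-degree-zero summand $\Lie(1)\circ\Mag$ coincides with the canonical map $\phi$; equivalently, that a pure $\cdot\circ\cdot$-monomial already lies in the $\Lie\circ\Mag$ normal form with trivial $\Lie$-component. This holds because every rewriting rule defining the distributive law --- the map $s$ on the Jacobi relation and both instances of $d$ --- involves the bracket $[\cdot,\cdot]$, so none of them can ever be applied to a monomial built solely from $\cdot\circ\cdot$; such a monomial is therefore left untouched by the reduction procedure and contributes exactly to the $k=1$ part of the composite. Once this is granted, the isomorphism follows with no further computation.
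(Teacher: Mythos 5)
Your proof is correct and follows essentially the same route as the paper, which states the corollary as an immediate consequence of the theorem: the surjection $\xi\colon\Lie\circ\Mag\twoheadrightarrow\PostLie$ from the filtered distributive law is shown in the proof of Theorem~\ref{thm:filtered-distrib} to be an isomorphism, and restricting it to the summand $\Lie(1)\circ\Mag\simeq\Mag$ gives exactly your factorisation $\phi=\xi\circ\iota$. Your closing observation that no rewriting rule ($s$ or $d$) touches a pure $\cdot\circ\cdot$-monomial is the right point of care --- it is precisely what fails for the Koszul dual, where (as the paper notes) the suboperad of $\ComTrias$ generated by $\cdot\star\cdot$ is $\Perm$ rather than the expected free quotient.
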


Note that the dual version of this corollary is not true: even though on the level of $\mathbb{S}$-modules we have $\ComTrias\simeq\mathop{\mathrm{Nil}}\circ\Com$, it is easy to check the suboperad of the operad $\ComTrias$ generated by the operation $\cdot\star\cdot$ is isomorphic to $\Perm$.

\subsection{The operad of commutative tridendriform algebras} 

The operad of commutative tridendriform algebras was studied by Loday \cite{Loday2007}. Let us write down the relations of this operad, and of its Koszul dual. The operad $\CTD$ is generated by a symmetric operation $\cdot\star\cdot$ and an operation $\cdot\prec\cdot$ without any symmetries that satisfy the relations
\begin{gather}
(a\prec b)\prec c=a\prec(b\prec c+c\prec b+b\star c),\\
(a\star b)\prec c=a\star(b\prec c)=(a\prec c)\star b,\\
(a\star b)\star c=a\star(b\star c).  
\end{gather}
The operad $\CTD^!$ is generated by a skew-symmetric operation $[\cdot,\cdot]$ and an operation $\cdot\bullet\cdot$ without any symmetries that satisfy the relations
\begin{gather}
[a,[b,c]]+[b,[c,a]]+[c,[a,b]]=0,\\
a\bullet[b,c]=a\bullet(b\bullet c),\\
[a,b]\bullet c=[a\bullet c,b]+[a,b\bullet c],\\
(a\bullet b)\bullet c=a\bullet(b\bullet c)+(a\bullet c)\bullet b.
\end{gather}

\begin{theorem}
\begin{itemize}
 \item The operad $\CTD$ is Koszul, and as an $\mathbb{S}$-module is isomorphic to $\Zinb\circ\Com$. 
 \item  The operad $\CTD^!$ is Koszul, and as an $\mathbb{S}$-module is isomorphic to $\Lie\circ\Leib$. 
\end{itemize}
\end{theorem}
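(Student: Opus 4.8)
The plan is to exhibit each operad as the outcome of a filtered distributive law and then invoke Theorem~\ref{thm:filtered-distrib}. For $\CTD$ I would set $\calA=\Zinb$, $\calB=\Com$, with generating spaces $\calV=\SPAN(\cdot\prec\cdot)$ and $\calW=\SPAN(\cdot\star\cdot)$, so that $\calA\circ\calB=\Zinb\circ\Com$; for $\CTD^!$ I would set $\calA=\Lie$, $\calB=\Leib$, with $\calV=\SPAN([\cdot,\cdot])$ and $\calW=\SPAN(\cdot\bullet\cdot)$, so that $\calA\circ\calB=\Lie\circ\Leib$. The two set-ups are dual to one another through Theorem~\ref{thm:koszul-dual} together with $\Com^!=\Lie$ and $\Zinb^!=\Leib$: a filtered distributive law producing $\CTD$ from $(\Zinb,\Com)$ dualises to one producing $\CTD^!$ from $(\Com^!,\Zinb^!)=(\Lie,\Leib)$, so it suffices to establish either one.

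The maps $s$ and $d$ can simply be read off the relations. For $\CTD^!$ the first relation is the Jacobi identity, the undeformed relation $\calR_{(2)}$ of $\Lie$, whence $s=0$; the last relation is the (right) Leibniz relation, the relation $\calS$ of $\Leib$; and the two mixed relations
\begin{gather}
d([a,b]\bullet c)=[a\bullet c,b]+[a,b\bullet c],\\
d(a\bullet[b,c])=a\bullet(b\bullet c)
\end{gather}
define the rewriting rule $d\colon\calW\bullet\calV\to\calV\bullet\calW\oplus\calW\bullet\calW$. For $\CTD$ the first relation deforms the Zinbiel relation by one mixed term,
\begin{equation}
s\bigl((a\prec b)\prec c-a\prec(b\prec c)-a\prec(c\prec b)\bigr)=a\prec(b\star c)\in\calV\bullet\calW,
\end{equation}
the middle relation $(a\star b)\prec c=a\star(b\prec c)=(a\prec c)\star b$ rewrites $\calW\bullet\calV$ into $\calV\bullet\calW$ and supplies $d$, and the last relation is the associativity of $\star$, the relation $\calS$ of $\Com$.

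I would do $\CTD^!$ first, because there $s=0$ forces the projection $\CTD^!\twoheadrightarrow\Lie$ to split automatically. It then remains only to check the weight~$3$ condition of Definition~\ref{def:filtered-distrib}, that is to verify that the already surjective map $\xi_3$ is injective, equivalently that no unexpected weight~$3$ relations arise; this is a finite computation carried out exactly as for $\PostLie$. Since $\Lie$ and $\Leib$ are Koszul, Theorem~\ref{thm:filtered-distrib} gives that $\CTD^!$ is Koszul and $\CTD^!\cong\Lie\circ\Leib$ as $\mathbb{S}$-modules. Applying Theorem~\ref{thm:koszul-dual} to this distributive law then yields the filtered distributive law building $(\CTD^!)^!=\CTD$ from $(\Leib^!,\Lie^!)=(\Zinb,\Com)$, and a second application of Theorem~\ref{thm:filtered-distrib} gives that $\CTD$ is Koszul and $\CTD\cong\Zinb\circ\Com$.

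The step needing genuine care, and the only one sensitive to positive characteristic, is the splitting hypothesis: for $\CTD^!$ it is free ($s=0$), but for $\CTD$ the relations of $\calA=\Zinb$ are genuinely deformed, so the splitting of $\CTD\twoheadrightarrow\Zinb$ (equivalently the hypothesis $\calE^!\twoheadrightarrow\calB^!$ of Theorem~\ref{thm:koszul-dual}) is not automatic from semisimplicity. I would dispose of it exactly as in the treatment of $\ComTrias$, where the required splitting reduces to the splitting on the level of generators, which is present here since $\calV$ sits inside $\calU=\calV\oplus\calW$. The remaining ingredients --- the two weight~$3$ verifications --- are routine, the $\CTD$ one being slightly more involved than the $\CTD^!$ one precisely because of the deformation term $a\prec(b\star c)$.
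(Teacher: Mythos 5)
Your overall route is a legitimate mirror image of the paper's proof: the paper verifies the filtered distributive law directly on $\CTD$, built from $(\Zinb,\Com)$, and then obtains the $\CTD^!$ statement from Theorem~\ref{thm:koszul-dual} (the splitting $\CTD^!\twoheadrightarrow\Lie$ being free since $s=0$ there), whereas you verify it directly on $\CTD^!$, built from $(\Lie,\Leib)$, and dualise back. Since Theorem~\ref{thm:koszul-dual} assumes nothing beyond the splitting hypothesis, both directions end up requiring exactly the same two splittings, and your identification of the maps $s$ and $d$ from the relations agrees with the paper's. (Note, incidentally, that the Koszulness of $\CTD$ already follows from that of $\CTD^!$ without dualising the distributive law at all; it is only the $\mathbb{S}$-module isomorphism $\CTD\simeq\Zinb\circ\Com$ that forces you through Theorem~\ref{thm:koszul-dual}.)

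However, your disposal of the one nontrivial splitting, that of $\CTD\twoheadrightarrow\Zinb$, is a genuine gap. The $\ComTrias$ argument you invoke works only because the target of the projection there is $\Mag^!=\mathop{\mathrm{Nil}}$, which vanishes in every arity $\geq 3$: a section of an $\mathbb{S}$-module surjection onto $\mathop{\mathrm{Nil}}$ need only be specified on the generators, where it exists tautologically because $\calW$ is a direct summand of $\calU$. The operad $\Zinb$ is nonzero in every arity ($\dim\Zinb(n)=n!$), so a section defined on generators says nothing about the components $\CTD(n)\twoheadrightarrow\Zinb(n)$ for $n\geq 3$, and in positive characteristic --- precisely the setting this paper cares about --- Maschke's theorem is unavailable to produce one. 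The argument that actually works, and the one the paper gives, is that $\Zinb(n)$ is a free (indeed regular) $\k\mathbb{S}_n$-module, hence projective, so every $\mathbb{S}_n$-equivariant surjection onto it splits. With that one-line replacement, and the deferred weight-$3$ verification for $(\Lie,\Leib)$ carried out by hand as in the $\PostLie$ case, your proof is complete.
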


\begin{proof}
By an immediate computation, we notice that the operad $\CTD$ is built from the operad $\Zinb$ and $\Com$ via a filtered distributive law. Indeed, we may put $\calV=\SPAN(\cdot\prec\cdot)$, $\calW=\SPAN(\cdot\star\cdot)$, and 
\begin{gather}
s((a\prec b)\prec c-a\prec(b\prec c+c\prec b))=a\prec(b\star c),\\
d(a\star(b\prec c))=(a\star b)\prec c,\\
d((a\prec c)\star b)=(a\star b)\prec c.
\end{gather}
(the weight~$3$ condition can be easily checked by hand; the projection $\CTD\twoheadrightarrow\Zinb$ splits because $\Zinb(n)$ is a free $\mathbb{S}_n$-module).
Therefore Theorems \ref{thm:filtered-distrib} and~\ref{thm:koszul-dual} prove all the statements of our theorem (for the latter, we observe that the projection $CTD^!\twoheadrightarrow\Lie$ splits because for $\CTD^!$ we have $s=0$). 
\end{proof}

The $\mathbb{S}$-module isomorphism in the first part was proved in~\cite{Loday2007} as a consequence of the existence of a good triple of operads $(\As,\CTD,\Com)$ and the isomorphism of $\mathbb{S}$-modules $\As\simeq\Zinb$. Our results recover that isomorphism, prove a similar isomorphism for $\CTD^!$, and also describe the sub-operads of $\CTD$ and $\CTD^!$ generated by either one of the operations. This provides the following bits of information that have been missing in~\cite{LodayEnc}.

\begin{proposition}
\begin{itemize}
 \item The generating series of the operad of dual commutative tridendriform algebras is equal to
\begin{equation}
f^{\CTD^!}(t)=-\log\left(\frac{1-2t}{1-t}\right).
\end{equation}
 \item The suboperad of $\CTD^!$ generated by the operation $\cdot\bullet\cdot$ is isomorphic to $\Leib$. 
\end{itemize}
\end{proposition}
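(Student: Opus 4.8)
The plan is to read off both statements from the $\mathbb{S}$-module isomorphism $\CTD^!\simeq\Lie\circ\Leib$ established above. For the generating series I would use that the exponential generating series of a composition of reduced $\mathbb{S}$-modules is the composition of the respective series, so that $f^{\CTD^!}(t)=f^{\Lie}(f^{\Leib}(t))$. Recalling $\dim\Lie(n)=(n-1)!$ and $\dim\Leib(n)=n!$, one has $f^{\Lie}(t)=\sum_{n\geq1}\tfrac{t^n}{n}=-\log(1-t)$ and $f^{\Leib}(t)=\sum_{n\geq1}t^n=\tfrac{t}{1-t}$, whence
\begin{equation}
f^{\CTD^!}(t)=-\log\!\left(1-\frac{t}{1-t}\right)=-\log\!\left(\frac{1-2t}{1-t}\right),
\end{equation}
which is the claimed formula.

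For the suboperad statement I would first observe that the last defining relation of $\CTD^!$, namely $(a\bullet b)\bullet c=a\bullet(b\bullet c)+(a\bullet c)\bullet b$, is exactly the (right) Leibniz identity, so the assignment sending the Leibniz generator to $\cdot\bullet\cdot$ is a well-defined operad morphism $\Leib\to\CTD^!$ that surjects, by construction, onto the suboperad $\langle\cdot\bullet\cdot\rangle$ generated by $\cdot\bullet\cdot$. The remaining task is injectivity, and here I would exploit the filtered distributive law presentation furnished by Theorem~\ref{thm:koszul-dual}: $\CTD^!$ is built from $\calA=\Com^!=\Lie$, with generators $\calV=\SPAN([\cdot,\cdot])$, and $\calB=\Zinb^!=\Leib$, with generators $\calW=\SPAN(\cdot\bullet\cdot)$, the undeformed relations $\calS$ being precisely the Leibniz relation. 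By Theorem~\ref{thm:filtered-distrib} the structure map $\xi\colon\calA\circ\calB\to\CTD^!$ is an isomorphism of $\mathbb{S}$-modules, and because $\xi$ is induced by the inclusion of generators $\calF(\calW)\hookrightarrow\calF(\calV\oplus\calW)$, its restriction to the summand $\calI\circ\calB\cong\calB$ (where $\calI$ is the operadic unit) coincides with the morphism $\Leib\to\CTD^!$ above. As $\xi$ is bijective and $\calI\circ\calB$ is a split sub-$\mathbb{S}$-module, this restriction is injective, so $\Leib\to\langle\cdot\bullet\cdot\rangle$ is an isomorphism.

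The step I would take most care over—and the only point that is not purely formal—is the identification of $\langle\cdot\bullet\cdot\rangle$ with the image under $\xi$ of the summand $\calI\circ\calB$: one must check both that every composite built solely from $\cdot\bullet\cdot$ uses no $\Lie$-generator, and hence lies in $\calI\circ\calB$, and conversely that $\xi$ carries all of $\calI\circ\calB$ onto such composites. This is what upgrades the $\mathbb{S}$-module isomorphism into an operad isomorphism onto the suboperad, and it follows directly from the explicit description of $\xi$, but it deserves to be spelled out. The same identification underlies the equivalent dimension-count reformulation: the surjection $\Leib\twoheadrightarrow\langle\cdot\bullet\cdot\rangle$ is an isomorphism because $\dim\langle\cdot\bullet\cdot\rangle(n)=\dim(\calI\circ\Leib)(n)=\dim\Leib(n)=n!$ in every arity.
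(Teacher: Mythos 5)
Your proposal is correct and follows exactly the route the paper intends: the paper states this proposition without a separate proof, as an immediate consequence of the preceding theorem's $\mathbb{S}$-module isomorphism $\CTD^!\simeq\Lie\circ\Leib$ obtained from the filtered distributive law, from which the generating series follows by composing $f^{\Lie}(t)=-\log(1-t)$ with $f^{\Leib}(t)=\frac{t}{1-t}$, and the suboperad statement follows since the Leibniz relations are the undeformed part ($\calS$) of the presentation, so that $\xi$ restricted to $\calI\circ\Leib$ identifies $\Leib$ with the suboperad generated by $\cdot\bullet\cdot$. Your care in checking that the image of $\calI\circ\calB$ under $\xi$ is precisely that suboperad is a detail the paper leaves implicit, and it is handled correctly.
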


Note that though the underlying $\mathbb{S}$-module of the operad $\Zinb=\Leib^!$ is used in the definition of the operad~$\CTD$, the dual statement to the second part of this proposition is not true: in the operad $\CTD$, the suboperad generated by $\cdot\prec\cdot$ is not isomorphic to~$\Zinb$ because of the ``lower term'' $a\prec(b\star c)$ added to the Zinbiel relation.

\subsection{The linear $\NAP_D$ operad}

\begin{proposition}\label{prop:relations-NAP}
Let $D$ be a graded vector space. The operad $\NAP_D$ is generated by binary operations $D\otimes\k\mathbb{S}_2$; these operations satisfy the relations 
\begin{equation}\label{eq:NAP_graded_vector_space}
d'\circ_1 d''.(23) = (-1)^{|d'||d''|} d''\circ_1 d'\quad  (\text{for homogeneous } d',d''\in D).\\ 
\end{equation} 
\end{proposition}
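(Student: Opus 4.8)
The plan is to follow the set-level proof of Proposition~\ref{prop:NAPoperad} essentially verbatim, the only genuinely new ingredient being the bookkeeping of Koszul signs dictated by the reference ordering of edges and by the two sign rules (for the $\symn$-action and for $\circ_i$) recorded just above the proposition. The operad structure itself is inherited from the polynomial-functor description, so I would invoke Remark~\ref{rem_polynomial} to transport the set-theoretic arguments and concentrate only on what the grading adds.

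First I would pin down the space of binary operations. On the vertex set $\sbrac{2}$ there are exactly two rooted trees, one rooted at $1$ and one rooted at $2$, each carrying a single edge and hence a single $D$-label; thus $\NAP_D(2)\cong D\oplus D$, and the transposition in $\symn$ interchanges the two summands (up to the Koszul sign coming from the relabelling), so that $\NAP_D(2)\cong D\otimes\k\mathbb{S}_2$ as an $\mathbb{S}_2$-module. Under this identification a homogeneous $d\in D$ is the generator $\smtree{2\\1\ar[u]_d}$.

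Next I would prove generation by induction on the arity, imitating the set-based argument. A basis element of $\NAP_D(n)$ is a rooted tree $T\in\RT(n)$ together with homogeneous $D$-labels on its edges, written in the reference order. Picking a leaf and applying a permutation, I may assume its edge is $\ed{(n-1)n}$ with label $d$; letting $T'$ be $T$ with that leaf and its edge deleted, I get $T=T'\circ_{n-1}\smtree{2\\1\ar[u]_d}$. The key point is that no sign appears here: the edge $\ed{(n-1)n}$ is the last one in the reference ordering (its head $n$ is maximal), and $T'$ has no edge with head exceeding $n-1$, so both the reordering sign and the composition sign of the displayed rule are trivial. Since a suboperad is stable under the symmetric action, the permutation used to normalise the leaf costs us nothing, and the inductive hypothesis applied to $T'$ finishes the step.

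Finally I would verify the relation~\eqref{eq:NAP_graded_vector_space} by an explicit computation in $\NAP_D(3)$, identifying both sides with the single tree monomial $\rho$ on $\sbrac{3}$ rooted at $1$ with $\ed{12}$ labelled $d'$ and $\ed{13}$ labelled $d''$. On the left, $d'\circ_1 d''$ is the tree rooted at $1$ with $\ed{12}$ labelled $d''$ and $\ed{13}$ labelled $d'$, carrying a factor $(-1)^{|d'||d''|}$ from the composition rule; applying $.(23)$ reorders the two tensor factors and multiplies by a further $(-1)^{|d'||d''|}$, so the two signs cancel and the left-hand side equals $\rho$. On the right, $d''\circ_1 d'$ equals $(-1)^{|d'||d''|}\rho$ by the same composition rule with the roles of $d'$ and $d''$ exchanged, and the prefactor $(-1)^{|d'||d''|}$ cancels it, again giving $\rho$. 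The main (and really the only) obstacle is keeping these signs straight; once the two sign conventions are applied mechanically the equality is immediate, and I expect no conceptual difficulty beyond careful tracking of the reference orderings.
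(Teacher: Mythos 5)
Your proposal is correct and takes essentially the same route as the paper: the paper's proof likewise transports the set-level argument of Proposition~\ref{prop:NAPoperad} verbatim for generation and observes that the relation~\eqref{eq:NAP_graded_vector_space} simply expresses the symmetric group action on labelled trees together with its Koszul sign. Your explicit arity-$3$ computation (both sides reducing to the same tree monomial with cancelling signs) is just a more detailed unwinding of the one-line sign identity displayed in the paper's proof.
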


\begin{proof}
The ``geometric'' version of this proposition is proved as part of Proposition~\ref{prop:NAPoperad}.  
That the linear version is generated by binary operations may be proved by precisely the same method.
As before the relations just express the symmetric group action on trees:
\begin{equation}
(23).\smtree{3 && 2 \\ &1\ar[ul]^{d'} \ar[ur]_{d''}& }=(-1)^{|d'||d''|}\smtree{2 && 3 \\ &1\ar[ul]^{d'} \ar[ur]_{d''}& }. 
\end{equation}
\end{proof}

\begin{theorem}\label{thm:NAP-Koszul}
The operad $\NAP_D$ is Koszul.
\end{theorem}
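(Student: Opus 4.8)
The plan is to prove the sharper statement that, as a shuffle operad, $\NAP_D^f$ admits a quadratic Gr\"obner basis of relations; by the criterion of~\cite{DK,DKRes} this forces $\NAP_D^f$, and therefore the symmetric operad $\NAP_D$, to be Koszul (the same computation will also yield Corollary~\ref{cor:NAPYquadratic}). First I would fix a homogeneous basis $\cbrac{d_\alpha}$ of $D$, so that by Proposition~\ref{prop:relations-NAP} the free shuffle operad surjecting onto $\NAP_D^f$ is generated by binary operations indexed by the $d_\alpha$, and the defining relations are the bilinear family~\eqref{eq:NAP_graded_vector_space}. The crucial structural observation is that in the shuffle category the passage from $\NAP$ to $\NAP_D$ is a clean decoration: an arity-$n$ shuffle tree monomial is built from $n-1$ binary vertices, each of which now carries a label $d_\alpha$, so that $\NAP_D^f(n)\cong\NAP^f(n)\otimes D^{\otimes(n-1)}$, and the shuffle composition merely concatenates these labels, introducing only the Koszul signs prescribed by the symmetry $\sigma$ of $(\gVect,\otimes,\sigma,\k)$.

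Next I would equip the free shuffle operad with the path-lexicographic order of~\cite{DK,DVJ}, refined so as to break ties by the labels $d_\alpha$, and record the leading term of each relation. Since~\eqref{eq:NAP_graded_vector_space} merely rewrites one of the two compositions of a pair of binary generators as a scalar multiple of the other, every relation is a rewriting rule with a single leading monomial, and the irreducible monomials are exactly the label-decorated versions of the normal monomials of the undecorated operad $\NAP^f$. The heart of the proof is then the confluence check: every S-polynomial lives in weight~$3$, and one must verify that it reduces to zero. Here I would argue by reduction to the known case: forgetting the $D$-labels turns the relations into the relations of $\NAP^f$, which is recalled in Section~\ref{sec:background} to have a quadratic Gr\"obner basis, so the underlying tree-level critical pairs are already confluent; because the composition in $\NAP_D^f$ acts on the labels only by concatenation and Koszul sign, each decorated S-polynomial is the decoration of an undecorated one, and its two reduction paths differ at most by a coherent system of signs.

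The step I expect to be the main obstacle is exactly this sign bookkeeping: one must check that, for each weight-$3$ critical pair, the two rewriting sequences assign the same Koszul sign, so that the undecorated confluence lifts verbatim; this is ultimately guaranteed by the coherence of $\sigma$, but it requires care because a single reduction can move several labels past one another. As a consistency check the resulting normal monomials are in bijection with pairs consisting of a rooted tree $T\in\RT(n)$ and an edge-labelling by the $d_\alpha$, which reproduces the dimension count $\NAP_D(n)\cong\bigoplus_{T\in\RT(n)}D^{\otimes(n-1)}$ of~\eqref{eq_NAPDpoly}; everything else in the argument is either formal (the Gr\"obner-basis-implies-Koszul implication) or a direct transcription of the ungraded $\NAP$ computation.
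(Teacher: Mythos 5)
Your overall strategy---exhibit a quadratic Gr\"obner basis for the shuffle version and invoke the criterion of~\cite{DK,DKRes}---aims at the same conclusion as the paper, but the mechanism is genuinely different. The paper never checks confluence of a single S-polynomial. Instead it introduces the quadratic cover $\calN_D$ presented by~\eqref{eq:NAP_graded_vector_space}, computes the Koszul dual $\calN_D^!$ explicitly (its relations force ``left combs'', giving $n(\dim D)^{n-1}$ monomials in arity $n$), observes that the tree monomials built from the leading terms $d_i\circ_1 d_j.(23)$, $d_i\circ_1\tilde{d}_j.(23)$, $d_i\circ_1\tilde{d}_j$ are counted by the same number, and invokes the counting criterion of~\cite{DFree}: the bound coming from leading monomials is sharp precisely when the Gr\"obner basis is quadratic. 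Koszulness of $\calN_D$ and $\calN_D^!$ follows with no rewriting computations at all; the functional equation $f_{\calN_D^!}(-f_{\calN_D}(-t))=t$ of~\cite{GK} then identifies the dimensions of $\calN_D$ with those of $\NAP_D$ from~\eqref{eq_NAPDpoly}, so the canonical surjection $\calN_D\twoheadrightarrow\NAP_D$ is an isomorphism. Your route---explicit confluence lifted from the undecorated $\NAP$ with Koszul-sign bookkeeping---would succeed (the paper's counting argument shows \emph{a posteriori} that all your weight-$3$ S-polynomials do reduce to zero), and it buys something real: it does not need $\dim D<\infty$, which the paper's generating-series step uses and which forces the finite-subset detour before Corollary~\ref{cor:NAPYquadratic} for infinite~$Y$. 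What the paper's method buys is the complete avoidance of exactly the step you flag as delicate, the case-by-case sign verification on critical pairs.

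Two caveats. First, there is a circularity in your setup: at this point of the paper it is \emph{not} known that~\eqref{eq:NAP_graded_vector_space} are \emph{defining} relations of $\NAP_D$---Proposition~\ref{prop:relations-NAP} only asserts that the binary operations generate and satisfy them; that they present the operad is Corollary~\ref{cor:NAPYquadratic}, a consequence of this theorem. So you must run Buchberger's procedure on the quadratic operad $\calN_D$ presented by these relations, and then use your count of normal monomials---a rooted tree in $\RT(n)$ together with an edge-labelling, i.e.\ $n^{n-1}(\dim D)^{n-1}$ in arity~$n$, matching~\eqref{eq_NAPDpoly}---to conclude that the surjection $\calN_D\twoheadrightarrow\NAP_D$ is an isomorphism. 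That count is therefore the load-bearing step that transfers Koszulness to $\NAP_D$ and yields the corollary, not the ``consistency check'' you call it. Second, the sign coherence is asserted rather than proved: appealing to the coherence of $\sigma$ is not by itself an argument, since sign discrepancies can genuinely destroy confluence; one must check the finitely many weight-$3$ overlap shapes with generic homogeneous labels $d_i,d_j,d_k$, including the degenerate instances such as $d\circ_1 d.(23)=-d\circ_1 d$ for odd~$d$. Note also that your lifting of leading terms from $\NAP$ relies on the ordering comparing underlying shuffle-tree monomials (e.g.\ leaf sequences) before labels; your ``break ties by labels'' refinement does ensure this, consistently with the ordering used in the paper's proof.
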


\begin{proof}
Note that according to Proposition~\ref{prop:relations-NAP}, the operad~$\NAP_D$ is a quotient of the operad $\calN_D$ generated by binary operations $D\otimes\k\mathbb{S}_2$ subject only to relations \eqref{eq:NAP_graded_vector_space}. Let us show that the operad~$\calN_D$ is Koszul, and is isomorphic to~$\NAP_D$. 

First of all, one can easily check that the Koszul dual $\calN_D^!$ of the operad~$\calN_D$ has generators $D^*\otimes\k\mathbb{S}_2$ subject to relations
\begin{gather}\label{eq:NAPdual_graded_vector_space}
e'\circ_1 e'' = (-1)^{|e'||e''|} e''\circ_1 e'.(23)\quad  (\text{for homogeneous } e',e''\in D^*),\\ 
e'\circ_2 e'' = 0. 
\end{gather} 
This immediately implies that if we choose a basis $e_1,\ldots,e_n$ of $D^*$, then for a basis of $\NAP_D^!(1)$ we can take the set of all ``left combs'' 
\begin{equation}
(e_{i_1}\circ_1 e_{i_2}\circ_1\cdots\circ_1 e_{i_{n-1}}).(1,k,k-1,\ldots,2),
\end{equation}
because our relations mean that the tree monomials can only ``grow'' to the left, and that we can reorder all elements except for the leftmost one arbitrarily. There are $(\dim D)^{n-1}\cdot n$ such monomials. At the same time, if we explicitly write the relations of $\calN_D$ as a shuffle operad, we see that its relations are
\begin{gather}
d'\circ_1 d''.(23)=(-1)^{|d'||d''|}d''\circ_1 d',\\ 
d'\circ_1 \tilde{d''}.(23)=(-1)^{|d'||d''|}\tilde{d''}\circ_2\tilde{d'},\\
d'\circ_1 \tilde{d''}=(-1)^{|d'||d''|}\tilde{d''}_j\circ_2 d'.
\end{gather}
Here we use the notation $\tilde{d}$ to abbreviate the ``opposite operation'' $d\otimes\sigma\in D\otimes\k\mathbb{S}_2$. 

Let us pick a basis $d_1,\ldots,d_n$ of~$D$, and define an ordering of tree monomials in the free shuffle operad with binary generators $D\otimes\k\mathbb{S}_2$ which is very similar to the path-lexicographic ordering~\cite{DK}. For two tree monomials, we first compare lexicographically their sequences of leaves, read left-to-right, and then compare the path sequences of those monomials, assuming 
\begin{equation}
d_1<\ldots<d_n<d_1.(12)<\ldots<d_n.(12).
\end{equation}

The leading monomials of the relations of $\calN_D$ are, respectively, $d_i\circ_1 d_j.(23)$, $d_i\circ_1 \tilde{d}_j.(23)$, and $d_i\circ_1 \tilde{d}_j$. The trees built from these monomials as building blocks give an upper bound on the dimensions of components of the Koszul dual operad which is sharp precisely when our operads have quadratic Gr\"obner bases \cite{DFree}. It is easy to see that there are exactly $(\dim D)^{n-1}\cdot n$ tree monomials built from these, so both the operads~$\calN_D$ and $\calN_D^!$ are Koszul. Power series inversion equation for Koszul operads~\cite{GK} implies that 
\begin{equation}
f_{\calN_D^!}(-f_{\calN_D}(-t))=t.  
\end{equation}
Since it is clear that
\begin{equation}
f_{\calN^!_D}(t)=\sum_{n\ge1}\frac{(\dim D)^{n-1}}{n!}t^n, 
\end{equation}
after denoting $g(s):=\frac{f_{\calN_D}(\dim D\cdot s)}{\dim D}$, we see that $g(-s)$ is the inverse of $-s\exp(-s)$ under composition, and hence $g(s)$ is the generating function enumerating rooted trees. Recalling that $\NAP_D$ as an~$\mathbb{S}$-module is described as $D$-decorated rooted trees, we conclude that components of $\calN_D$ and $\NAP_D$ have same dimensions, and therefore these operads are isomorphic, the former being a quotient of the latter.
\end{proof}
This proof concluded by showing that $\NAP_D$ is presented by quadratic relations.  
By considering the linearization of the operad $\NAP_F$ when $F$ is a finite set we see that $\NAP_F$ is also presented by quadratic relations.
Now suppose that $Y$ is infinite.  Any finite set of $Y$-trees involves a finite number of labels $F$ and hence any relation in $\NAP_Y$ is contained within $\NAP_F$ which is in turn presented by quadratic relations.  Therefore we have the following.
\begin{corollary}\label{cor:NAPYquadratic}
Let $Y$ be a topological space.  Then the operad $\NAP_Y$ is generated by binary operations and is presented by its quadratic relations.
\end{corollary}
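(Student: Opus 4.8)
The plan is to separate the two assertions and to reduce the harder one---the quadratic presentation---first to a linear statement and then to the case of a finite label set, where Theorem~\ref{thm:NAP-Koszul} applies directly. Generation by binary operations requires no new argument: the leaf-peeling decomposition given in the proof of Proposition~\ref{prop:NAPoperad} writes any $Y$-tree as an iterated composition of two-vertex trees together with a permutation, and that argument uses only the underlying set of $Y$, so it carries over verbatim to an arbitrary topological space. It therefore remains to show that every relation holding among these binary generators in $\NAP_Y$ is a consequence of the quadratic relation~\eqref{eq_NAPrel}.

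First I would treat a finite label set $F$. Linearising the set operad $\NAP_F$ over $\k$ yields, arity by arity, the operad $\NAP_{\k F}$, since the polynomial description $\NAP_F(n)\cong\coprod_{T\in\RT(n)}F^{E(T)}$ passes under $\k[-]$ to the description $\NAP_{\k F}(n)\cong\bigoplus_{T\in\RT(n)}(\k F)^{\otimes E(T)}$ of~\eqref{eq_NAPDpoly}. The proof of Theorem~\ref{thm:NAP-Koszul} established that $\NAP_{\k F}$ is isomorphic to the operad $\calN_{\k F}$ presented solely by the quadratic relations~\eqref{eq:NAP_graded_vector_space}; in particular $\NAP_{\k F}$ is quadratic. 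To transfer this back to the set operad, let $\NAP_F^{(2)}$ denote the set operad generated by the same binary operations subject only to the quadratic relation~\eqref{eq_NAPrel}. There is a canonical surjection $\NAP_F^{(2)}\twoheadrightarrow\NAP_F$, and applying $\k[-]$ produces the natural surjection $\calN_{\k F}\twoheadrightarrow\NAP_{\k F}$, which Theorem~\ref{thm:NAP-Koszul} identifies as an isomorphism in each arity.

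The remaining point, which I expect to be the only genuine subtlety, is the faithfulness of linearisation: a surjection of sets $S\twoheadrightarrow T$ that induces an isomorphism $\k[S]\xrightarrow{\sim}\k[T]$ of free vector spaces must itself be a bijection, since two distinct elements of $S$ with a common image would produce a nonzero element of the kernel. Applying this in each arity to $\NAP_F^{(2)}(n)\twoheadrightarrow\NAP_F(n)$ shows that the quadratic presentation already exhausts all relations, so $\NAP_F$ is quadratic. Finally, for an arbitrary (possibly infinite) topological space $Y$, any single relation in $\NAP_Y$ involves only finitely many edge labels and hence lies in the suboperad $\NAP_F$ for the finite set $F\subseteq Y$ of labels occurring in it; since $\NAP_F$ is presented by quadratic relations, that relation is a consequence of~\eqref{eq_NAPrel}. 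As the topology plays no role in this combinatorial statement, the conclusion holds for every topological space $Y$.
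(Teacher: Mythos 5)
Your proof is correct and takes essentially the same route as the paper: generation by binary operations via the leaf-peeling argument of Proposition~\ref{prop:NAPoperad}, reduction of the quadratic presentation to a finite label set $F$, invocation of Theorem~\ref{thm:NAP-Koszul} for the linearisation $\NAP_{\k F}$, and the passage to infinite $Y$ by noting that any relation involves only finitely many labels and hence lies in the suboperad $\NAP_F$. Your explicit faithfulness-of-linearisation step (a surjection of sets inducing an isomorphism of $\k$-linear spans is a bijection) simply spells out what the paper leaves implicit in the phrase ``by considering the linearization of the operad $\NAP_F$''.
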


\begin{remark}
The proof of Theorem~\ref{thm:NAP-Koszul} used arguments involving the Koszul dual and its Hilbert series to show that the quadratic relations suffices to present $\NAP_Y$.
A more direct proof is possible using a certain ``geometric'' map from $\mathcal{F}(\NAP_Y(2))$ to $\NAP_Y$.
We will denote elements of $\NAP_Y(2)$ by
\begin{equation}
\vcenter{\hbox{\includegraphics[height=12mm]{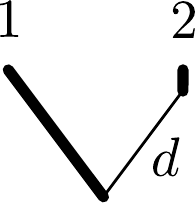}}}
\quad\text{ and }\quad
\vcenter{\hbox{\includegraphics[height=12mm]{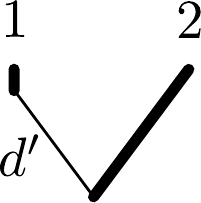}}}
\end{equation}
for $d,d'\in Y$.  In each generator there is a thin line labelled with an element of $Y$, a thick line running from root to the end of a leaf and a small portion of thick line at the end of the other leaf. Then the $\NAP_Y$-relation~\eqref{eq_NAPrel} states that
\begin{equation}
d'\circ_1 d \quad = \quad
\vcenter{\hbox{\includegraphics[height=20mm]{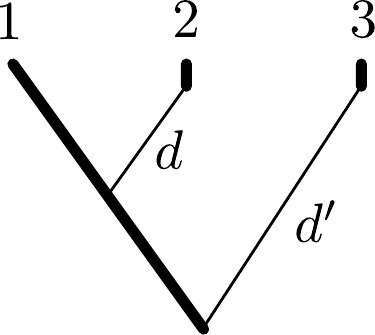}}}
\quad = \quad
\vcenter{\hbox{\includegraphics[height=20mm]{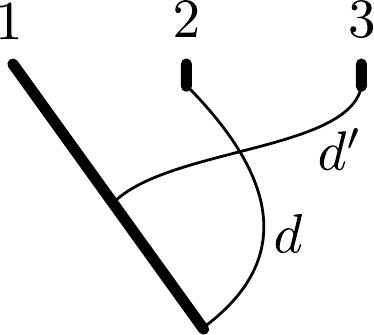}}}
\quad = \quad d\circ_1 d' . (23).
\end{equation}
The thin lines may be seen to ``move freely'' along the thick lines.
A couple of facts are apparent about any arity $n$ tree monomial in these generators:
\begin{enumerate}
\item The thick lines never branch and each thick line can be followed up the tree to a unique leaf, in this way the thick lines are in bijection with the leaves.
\item Every thin line joins two thick lines and is labelled by an element of $Y$.
\end{enumerate}
So by contracting each thick line to a point and using these as vertices we are left with a tree with vertex set $\sbrac{n}$.  The thin lines become the edges and are already labelled by elements of $Y$.  This tree is rooted by following the thick line starting at the bottom of the tree monomial to its leaf.  Hence we have an explicit map from $\mathcal{F}(\NAP_Y(2))$ to $\NAP_Y$. An example:
\begin{equation}
\vcenter{\hbox{
\includegraphics[height=35mm]{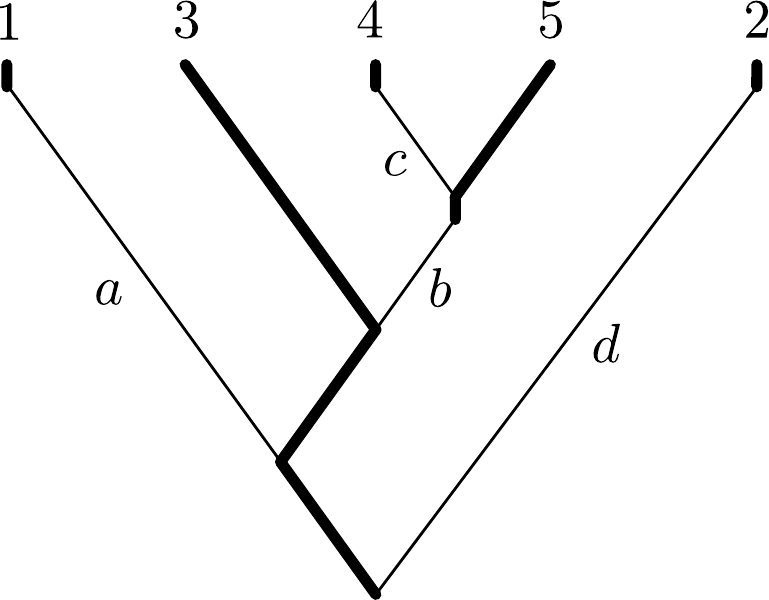}
}} 
\quad
\mapsto
\qquad 
\vcenter{
\xymatrix{&&4\\
1 & 2 & 5 \ar[u]_c \\
 & 3 \ar[ul]^a \ar[u]_d \ar[ur]_b & }
}
\end{equation}
The fact that the quadratic presentation forms a Gr\"obner basis means that the operad it presents may be described by certain admissible tree monomials.  By comparing this basis with the $Y$-trees via the map just described we may see that $\NAP_Y$ is presented by the quadratic basis. A reader interested in combinatorics should compare our construction with one of the well known ``Catalan bijections'' which takes a planar rooted binary tree with $n$ leaves and contracts all left-going edges, thus obtaining a planar rooted tree with $n$ vertices.
\end{remark}

\subsection{The linear operads of based cacti}

\begin{proposition}
Let $(C,\Delta,\epsilon,\gamma)$ be a graded augmented cocommutative coalgebra. The operad $\BCact_C$ is generated by binary operations $C\otimes\k\mathbb{S}_2$; these operations satisfy the relations 
\begin{gather}
c'\circ_1 c''.(23) = (-1)^{|c'||c''|} c''\circ_1 c'\quad  (\text{for homogeneous } c',c''\in C),\label{nap0}\\
c\circ_2\one = \sum c_{(1)}\circ_1 c_{(2)} \quad (\text{for } c\in C)\label{point}
\end{gather}
which suffice to present the operad.
\end{proposition}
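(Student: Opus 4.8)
The plan is to derive all three assertions from the corresponding facts about $\NAP_C$ together with the description of $\BCact_C$ as a quotient. First I would recall from Proposition~\ref{prop:cacti-distr} that $\BCact_C$ is the quotient of $\NAP_C$ by the operadic ideal generated, in arity~$3$, by the single relation
\begin{equation}
\smtree{3 \\ 2\ar[u]_\one\\ 1\ar[u]_c} - \sum\smtree{2 && 3 \\ &1\ar[ul]^{c_{(1)}} \ar[ur]_{c_{(2)}}} = 0,
\end{equation}
and from Proposition~\ref{prop:relations-NAP} that $\NAP_C$ is generated by the binary operations $C\otimes\k\mathbb{S}_2$. Since a quotient of an operad generated in arity~$2$ is again generated in arity~$2$, the images of these binary operations generate $\BCact_C$; this settles the first assertion.

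Next I would verify that relations~\eqref{nap0} and~\eqref{point} hold in $\BCact_C$. Relation~\eqref{nap0} is exactly the defining relation of $\NAP_C$ recorded in Proposition~\ref{prop:relations-NAP}, so it holds already in $\NAP_C$ and a fortiori in the quotient. For~\eqref{point} the real work is to recognise it as the operadic-composition reformulation of the cacti relation above. Writing $c$ for the generator $\smtree{2\\1\ar[u]_c}$, one computes that grafting $\smtree{2\\1\ar[u]_\one}$ into the second input gives $c\circ_2\one=\smtree{3\\2\ar[u]_\one\\1\ar[u]_c}$, whereas grafting into the first input gives $\sum c_{(1)}\circ_1 c_{(2)}=\sum\smtree{2 && 3\\&1\ar[ul]^{c_{(1)}}\ar[ur]_{c_{(2)}}}$. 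Identifying these two expressions requires the cocommutativity $\sigma\Delta=\Delta$ of $C$, which absorbs the relabelling of leaves and the accompanying Koszul sign from $\gVect$ produced by the operadic composition maps of $\NAP_C$. This sign- and label-bookkeeping is the only delicate point of the proof.

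Finally, to show that~\eqref{nap0} and~\eqref{point} suffice to present $\BCact_C$, I would compose the two presentations. By Theorem~\ref{thm:NAP-Koszul} the operad $\NAP_C$ is presented by the generators $C\otimes\k\mathbb{S}_2$ subject only to the quadratic relations~\eqref{nap0}. Passing to the further quotient by the cacti ideal of Proposition~\ref{prop:cacti-distr}, which the previous step identifies with the ideal generated by~\eqref{point}, yields
\begin{equation}
\BCact_C\cong\calF(C\otimes\k\mathbb{S}_2)/I,
\end{equation}
where $I$ is the operadic ideal generated by~\eqref{nap0} and~\eqref{point}. This is precisely the claimed presentation. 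The only genuine content beyond invoking the cited results is the arity-$3$ translation carried out in the previous paragraph; the rest is the formal composition of two quotient presentations.
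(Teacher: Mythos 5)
Your argument is correct and takes essentially the same route as the paper's own proof, which likewise combines Proposition~\ref{prop:cacti-distr} (realising $\BCact_C$ as the quotient of $\NAP_C$ by the ideal generated by~\eqref{point}) with the fact, established in the proof of Theorem~\ref{thm:NAP-Koszul}, that the relations~\eqref{nap0} present $\NAP_C$. The only difference is that you spell out the translation, via cocommutativity of $\Delta$, of the arity-$3$ tree relation of Proposition~\ref{prop:cacti-distr} into the compositional form $c\circ_2\one=\sum c_{(1)}\circ_1 c_{(2)}$, a routine step the paper leaves implicit.
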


\begin{proof}
According to Proposition~\ref{prop:cacti-distr}, the operad $\BCact_C$ is isomorphic to the quotient of $\NAP_C$ by the operadic ideal generated by relations~\eqref{point}. Also, from the proof of Theorem~\ref{thm:NAP-Koszul}, we know that the relations~\eqref{nap0} are the defining relations of~$\NAP_C$, which completes the proof.
\end{proof}

\begin{remark}
For the sake of completeness, let us describe the relations of the Koszul dual operad~$\BCact_C^!$. Its space of generators is $C^*\otimes\k\mathbb{S}_2$; note that $C^*$ is a graded commutative algebra which splits as $\k\one\oplus\overline{C^*}$. The relations are
\begin{equation}\label{right-comb}
c\circ_2\overline{c}=0 \quad \text{for homogeneous } c\in C^*, \overline{c}\in\overline{C^*},
\end{equation}
\begin{multline}
c'\circ_1 c''-(c'c'')\circ_2\one=\\=(-1)^{|c'||c''|}(c''\circ_1 c'-(c''c')\circ_2\one).(23) \quad \text{for homogeneous } c',c''\in C^*.\label{preLie-type}
\end{multline}
Note that for $c=c'=\one$ the relation~\eqref{preLie-type} is precisely the pre-Lie relation. This is not at all surprising, since by combining Theorem~\ref{thm:koszul-dual} with Theorem~\ref{thm:cacti-koszul} below we expect that the $\mathbb{S}$-modules
\begin{equation}
\BCact_C^! \text{ and } \NAP_{\overline{C}}^!\circ(\Perm)^!\simeq\NAP_{\overline{C}}^!\circ\PreLie 
\end{equation}
are isomorphic, and that $\PreLie$ is a suboperad of~$\BCact_C^!$.
\end{remark}

\begin{theorem}\label{thm:cacti-koszul}
For a graded augmented cocommutative coalgebra~$C$, the operad $\BCact_C$ is Koszul, and as $\mathbb{S}$-modules, 
\begin{equation}
\BCact_C\simeq\Perm\circ\NAP_{\overline{C}}.
\end{equation}
\end{theorem}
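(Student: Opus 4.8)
The plan is to realise $\BCact_C$ via a filtered distributive law between $\calA=\Perm$ and $\calB=\NAP_{\overline{C}}$, and then invoke Theorem~\ref{thm:filtered-distrib}. Using the splitting $C=\k\one\oplus\overline{C}$, the space of binary generators $C\otimes\k\mathbb{S}_2$ decomposes as $\calV\oplus\calW$ with $\calV=\one\otimes\k\mathbb{S}_2$ and $\calW=\overline{C}\otimes\k\mathbb{S}_2$, where $\calV$ should generate a copy of $\Perm$ and $\calW$ the operad $\NAP_{\overline{C}}$. First I would sort the defining relations \eqref{nap0} and \eqref{point} of $\BCact_C$ according to how many arguments lie in $\k\one$. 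The relations \eqref{nap0} with both arguments in $\overline{C}$ are exactly the relations $\calS$ of $\NAP_{\overline{C}}$ (by the presentation extracted in the proof of Theorem~\ref{thm:NAP-Koszul}). The relations \eqref{nap0} with both arguments equal to $\one$, together with \eqref{point} for $c=\one$ (which, since $\Delta\one=\one\otimes\one$, reads $\one\circ_2\one=\one\circ_1\one$), assert associativity and right-permutativity of the $\one$-labelled product, i.e. they are precisely the quadratic relations $\calR$ of $\Perm$.

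Crucially, because $\Delta\one=\one\otimes\one$ involves no element of $\overline{C}$, these $\Perm$-relations hold in $\BCact_C$ without any lower-order correction; in the language of Section~\ref{sec:distr-laws} this means $s=0$, so $\calQ=\calR$ and the projection $\pi\colon\calE\twoheadrightarrow\Perm$ splits automatically (the relations of $\calA$ are undeformed). The remaining defining relations --- \eqref{nap0} with exactly one argument equal to $\one$, and \eqref{point} for $c\in\overline{C}$ --- are what furnishes the rewriting rule
\begin{equation*}
d\colon\calW\bullet\calV\to\calV\bullet\calW\oplus\calW\bullet\calW.
\end{equation*}
Indeed, the mixed instance of \eqref{nap0} rewrites the $\calW\bullet\calV$ monomial $\overline{c}\circ_1\one$ as $(\one\circ_1\overline{c}).(23)$, which lies in $\calV\bullet\calW$ (the sign is trivial as $|\one|=0$); and \eqref{point} for $\overline{c}\in\overline{C}$, after expanding $\Delta(\overline{c})=\overline{c}\otimes\one+\one\otimes\overline{c}+\sum\overline{c}'_{(1)}\otimes\overline{c}'_{(2)}$, rewrites $\overline{c}\circ_2\one$ as a combination of the $\calV\bullet\calW$ monomial $\one\circ_1\overline{c}$, the $\calW\bullet\calW$ monomials $\overline{c}'_{(1)}\circ_1\overline{c}'_{(2)}$, and the already-rewritten term $\overline{c}\circ_1\one$. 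I would then check that these relations express every element of $\calW\bullet\calV$ inside $\calV\bullet\calW\oplus\calW\bullet\calW$, so that $d$ is a well-defined $\mathbb{S}$-module map and the defining ideal is of the required form $\calT=\calQ\oplus\calD\oplus\calS$.

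It remains to verify the weight-$3$ condition of Definition~\ref{def:filtered-distrib}, that $\xi_3\colon(\Perm\circ\NAP_{\overline{C}})_{(3)}\to\calE_{(3)}$ is an isomorphism. Here I would exploit that the whole $\mathbb{S}$-module structure is already known: on one hand \eqref{eq_polyBCactC} gives $\BCact_C(n)\cong\bigoplus_{F\in\PFs(n)}\overline{C}^{\otimes E(F)}$; on the other hand a $\Perm$-operation in $t$ slots filled by rooted $\overline{C}$-trees, with the distinguished slot recorded by $\Perm$, is exactly the datum of a planted forest of $\overline{C}$-trees with a chosen tree, so $(\Perm\circ\NAP_{\overline{C}})(n)\cong\bigoplus_{F\in\PFs(n)}\overline{C}^{\otimes E(F)}$ as well, the weight grading matching the arity grading. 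Since $\xi$ is always surjective and the two sides have equal (graded) dimension in every arity, $\xi$ --- in particular $\xi_3$ --- is an isomorphism. I expect the genuine content to be concentrated in the previous step: confirming that the relations really separate into $\calQ$, $\calD$, $\calS$ with $s=0$ and that $d$ is consistently defined, both of which rest on the coassociativity and cocommutativity of $C$ (these are what guarantee that the two ways of reducing a weight-$3$ monomial agree).

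Finally, $\Perm$ is Koszul (Section~\ref{sec:background}) and $\NAP_{\overline{C}}$ is Koszul by Theorem~\ref{thm:NAP-Koszul}, so Theorem~\ref{thm:filtered-distrib} applies and yields simultaneously that $\BCact_C$ is Koszul and that $\BCact_C\simeq\Perm\circ\NAP_{\overline{C}}$ as $\mathbb{S}$-modules, as claimed.
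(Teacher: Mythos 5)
Your proposal is correct, and its skeleton coincides with the paper's own proof: the same splitting $C=\k\one\oplus\overline{C}$ of the generators into $\calV=\one\otimes\k\mathbb{S}_2$ and $\calW=\overline{C}\otimes\k\mathbb{S}_2$, the same identification of the $\one$-only relations with $\Perm$ and of the $\overline{C}$-only relations with $\NAP_{\overline{C}}$ (via Theorem~\ref{thm:NAP-Koszul}), the same rewriting map $d$ assembled from the mixed relations (including the same treatment of the stray term $\overline{c}\circ_1\one$ in the expansion of $\Delta(\overline{c})$, which the counit axiom guarantees contains no $\one\otimes\one$ term), the same observation that $s=0$ makes the splitting of $\pi\colon\calE\twoheadrightarrow\Perm$ automatic, and the same final appeal to Theorem~\ref{thm:filtered-distrib}. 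Where you genuinely diverge is in verifying the weight-$3$ condition of Definition~\ref{def:filtered-distrib}. The paper checks it locally: it performs the ambiguous rewritings bringing $\one$ towards the root, \eqref{commutativity}--\eqref{nap-rewriting2}, and verifies they produce no new relations, which pinpoints exactly where cocommutativity, coassociativity and the NAP relations enter. You argue globally instead: $\xi$ is always surjective, and by \eqref{eq_polyBCactC} together with your (correct) planted-forest description of $\Perm\circ\NAP_{\overline{C}}$, source and target have equal dimensions in every arity and internal degree (finite by the standing conventions of Section~\ref{sec:background}), so $\xi$ is an isomorphism in all weights, in particular in weight $3$. This is legitimate and even yields more than is strictly needed --- the $\mathbb{S}$-module isomorphism becomes an input rather than an output of Theorem~\ref{thm:filtered-distrib} --- but be aware of what it leans on: \eqref{eq_polyBCactC} rests on the claim of Section~\ref{sec:homology} that every $C$-tree reduces to a unique irreducible $C$-tree, and the confluence underlying that uniqueness is, in the linear setting, precisely the coassociativity/cocommutativity verification that the paper's proof makes explicit. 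So your argument is not circular (the normal-form statement is established before the theorem), but it relocates the real work rather than avoiding it: the paper's local check is self-contained at the level of the quadratic presentation, while your dimension count buys the $\mathbb{S}$-module identification and the weight-$3$ condition in one stroke, at the price of depending on the earlier normal-form computation.
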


\begin{proof}
Let us show that $\BCact_C$ is obtained from $\Perm$ and $\NAP_{\overline{C}}$ via a filtered distributive law.

Using the splitting of~$C$ along the augmentation, we can refine the formulae~\eqref{nap0} and~\eqref{point} as follows:
\begin{gather}
\one\circ_1\one.(23)=\one\circ_1\one,\label{perm1}\\
c\circ_1\one=\one\circ_1 c.(23) \quad (\text{for } c\in\overline{C}),\label{distr1}\\
c'\circ_1 c''.(23) = (-1)^{|c'||c''|} c''\circ_1 c'\quad  (\text{for homogeneous } c',c''\in\overline{C}),\label{nap}\\
\one\circ_2\one = \one\circ_1\one,\label{perm2}\\
c\circ_2\one = \sum c_{(1)}\circ_1 c_{(2)} \quad (\text{for }c\in\overline{C}).\label{distr2}
\end{gather}
The formulae~\eqref{perm1}, \eqref{distr1}, and~\eqref{nap} represent the formula~\eqref{nap0} after splitting, and the formulae~\eqref{perm2} and~\eqref{distr2} represent the formula~\eqref{point} after splitting. It is clear that the formulae~\eqref{perm1} and~\eqref{perm2} describe the operad~$\Perm$, while the formula~\eqref{nap} describes precisely the operad~$\NAP_{\overline{C}}$. It remains to show that the formulae~\eqref{distr1} and~\eqref{distr2} define a filtered distributive law between these two operads. To be precise, we first need to check that the formula~\eqref{distr2} stands a chance of defining a distributive law, since \emph{a priori} its right hand side is a mixture of all possible tree monomials. However, we first note that the compatibility of the counit with the coproduct ensures that if $c\in\overline{C}$ then  
\begin{equation}
\Delta(c)\in\overline{C}\otimes\k\one+\k\one\otimes\overline{C}+\overline{C}\otimes\overline{C},
\end{equation}
so the tree monomial $\one\circ_1\one$ is missing on the right hand side of~\eqref{distr2}. Also, the tree monomials of the form $c'\circ_1\one$ (with $c'\in\overline{C}$) appearing on the right hand side should be rewritten using the formula~\eqref{distr1}, but this minor detail will not affect any of our computations.

To check that the formulae~\eqref{distr1} and \eqref{distr2} define a filtered distributive law between $\Perm$ and~$\NAP_{\overline{C}}$, one need to perform carefully all ambiguous rewritings bringing the generator~$\one$ towards the root of a tree monomial, checking that they do not give additional new relations. We shall omit the details, indicating briefly that the rewriting of 
\begin{equation}\label{commutativity}
c\circ_2(\one\circ_1\one.(23))= c\circ_2(\one\circ_1\one) 
\end{equation}
does not result in a new relation because the coproduct of~$C$ is cocommutative, while the rewriting of
\begin{equation}\label{associativity}
c\circ_2(\one\circ_1\one)=c\circ_2(\one\circ_2\one) 
\end{equation}
does not result in a new relation because the coproduct of~$C$ is coassociative, and finally the rewriting of
\begin{equation}\label{nap-rewriting1}
c'\circ_1(c''\circ_2\one)=(-1)^{|c'||c''|}(c''\circ_2\one)\circ_1 c', 
\end{equation}
as well as
\begin{equation}\label{nap-rewriting2}
(c\circ_1\one)\circ_3\one=(c\circ_2\one)\circ_1\one
\end{equation}
does not result in new relations because of the NAP-type relations \eqref{nap0}. This, together with the observation that the projection $\BCact_C\twoheadrightarrow\Perm$ always splits because the relations of $\Perm$ remain undeformed ($s=0$), completes the proof of our theorem.
\end{proof}

\begin{remark}
Let $Y$ be the (pointed) two-element set~$\{\mathbf{0},1\}$, so that $C=H_*(Y)$ is the split two-dimensional coalgebra $\k\oplus\k$, as in the example~\ref{ex:perm-nap} below. Theorem~\ref{thm:cacti-koszul} shows that we have an $\mathbb{S}$-module isomorphism
\begin{equation}
\BCact_C\simeq\Perm\circ\NAP\simeq\Perm\circ\PreLie\simeq\NAP^!\circ\PreLie\simeq\BCact_C^!, 
\end{equation}
but the operads $\BCact_C$ and~$\BCact_C^!$ are substantially different. Of course, there is also a trivial operad structure on the $\mathbb{S}$-module $\Perm\circ\PreLie$ for which the insertion of any $\Perm$-operation into any $\PreLie$-operation is equal to zero; this operad is Koszul and self-dual. It is an open question whether there exist nontrivial self-dual Koszul operad structures on $\Perm\circ\PreLie$ via a distributive law or a filtered distributive law between $\Perm$ and $\PreLie$; such operads would be natural candidates to encode ``pre-Poisson algebras'' (much different from the ones in~\cite{Ag00}) and ``pre-associative algebras''. 
\end{remark}

\bibliographystyle{amsplain}

\providecommand{\bysame}{\leavevmode\hbox to3em{\hrulefill}\thinspace}
\providecommand{\MR}{\relax\ifhmode\unskip\space\fi MR }
\providecommand{\MRhref}[2]{%
  \href{http://www.ams.org/mathscinet-getitem?mr=#1}{#2}
}
\providecommand{\href}[2]{#2}

\end{document}